\newcommand{\ba}{{\mathbf{a}}}
\newcommand{\bb}{{\mathbf{b}}}
\newcommand{\bc}{{\mathbf{c}}}
\newcommand{\bi}{{\mathbf{i}}}
\newcommand{\bp}{{\mathbf{p}}}
\newcommand{\bu}{{\mathbf{u}}}
\newcommand{\bw}{{\mathbf{w}}}
\newcommand{\bx}{{\mathbf{x}}}
\newcommand{\by}{{\mathbf{y}}}
\newcommand{\bz}{{\mathbf{z}}}
\newcommand{\SD}{{\mathcal D}}
\newcommand{\SE}{{\mathcal E}}
\def\bu{{\bf u}}
\def\bw{{\bf w}}
\newtheorem{theorem}{Theorem}[section]
\newtheorem{thm}[theorem]{Theorem}
\newtheorem{prop}{Proposition}[section]
\newtheorem{cor}[prop]{Corollary}
\newtheorem{lemma}[prop]{Lemma}
\newtheorem{remark}[prop]{Remark}
\newtheorem{defi}[prop]{Definition}
\newtheorem{example}{Example}[section]
\numberwithin{equation}{section}
\def\cal{\mathcal }
\def\R{\mathbb R}
\def\Z{\mathbb Z}
\def\Q{\mathbb Q}
\def\mathscr{\mathcal }
\def\diag{\text{diag}}
\newif\ifdraft
\numberwithin{equation}{section}
\begin{document}
%\maketitle

\title[Invariance of multifractal spectrum of uniform self-affine measures and its applications]{Invariance of multifractal spectrum of uniform self-affine measures and its applications}

 \author{Hui Rao} \address{Department of Mathematics and Statistics, Central China Normal University, Wuhan, 430079, China
} \email{hrao@mail.ccnu.edu.cn
 }

\author{Ya-min Yang} \address{Institute of applied mathematics, College of Science, Huazhong Agricultural University, Wuhan,430070, China.
} \email{yangym09@mail.hzau.edu.cn
 }
\author{Yuan Zhang$^*$} \address{Department of Mathematics and Statistics, Central China Normal University, Wuhan, 430079, China
} \email{yzhang@mail.ccnu.edu.cn
 }

\date{\today}
\thanks {The work is supported by NSFS Nos. 11971195, 12071167 and 11601172.}

\thanks{{\bf 2000 Mathematics Subject Classification:}  28A80,26A16\\
 {\indent\bf Key words and phrases:}\ Self-affine carpet, multifractal spectrum, doubling measure}

\thanks{* The correspondence author.}
\begin{abstract}
 We study the bi-Lipschitz classification of Bedford-McMullen carpets which are totally disconnected.
 Let $E$ be a such carpet and let $\mu_E$ be the uniform Bernoulli measure on $E$.
 We show that the multifractal spectrum     and   the
 doubling property of $\mu_E$ are both    invariant under a bi-Lipschitz map.
 Moreover, we show that if   $\mu_E$ and $\mu_F$ are doubling, then a bi-Lipschitz map between $E$ and $F$ enjoys a certain measure preserving property.
\end{abstract}
\maketitle

%\tableofcontents

\section{\textbf{Introduction}}
Lipschitz classification is an important problem in geometrical measure theory and fractal geometry.
After the pioneer works of Cooper and Pignataro \cite{CP88} and Falconer and Marsh \cite{FaMa92}, there are many works on Lipschitz equivalence of self-similar sets, see \cite{DS,  RRX06, XX10,  Mattila,  RRW12, LL13, XiXi21, RZ15}.
Recall that two metric space $(X, d_X)$ and $(Y, d_Y)$ are said to be Lipschitz equivalent, denoted by $X\sim Y$, if there is a  bi-Lipschitz map $f: X\to Y$, precisely, there is a constant $C>0$ such that
$$
C^{-1}d_X(x,y) \leq d_Y(f(x), f(y))\leq C d_X(x,y), \text{ for all }x,y\in X.
$$
The goal of this paper is to study the Lipschitz classification of self-affine carpets, a topic which receives very few study, and is much harder than the setting
of self-similar sets.

Let $2\leq m<n$ be two  integers and denote by
$\diag (n,m)$ the diagonal matrix with diagonal entries $(n,m)$.
 Let ${\cal D}\subset\{0,1,\dots,n-1\}\times\{0,1,\dots,m-1\}$, which we call a \emph{digit set}.
 For $d\in \SD$, we define $S_d: \R^2\to \R^2$ by
 $
 S_d(z)=\diag(n^{-1}, m^{-1})(z+d).
 $
Then $\{S_d\}_{d\in \SD}$ is an iterated function system (IFS).
 The unique non-empty compact set $E=K(n,m,{\cal D})$
satisfying the set equation
$E=\underset{d\in{\cal D}}\bigcup  S_d(E)$
is called a \emph{Bedford-McMullen} carpet. In this paper, we shall call $E$ simply a \emph{self-affine carpet}.

Let us start with some notations. We set ${\mathcal M}_{t}$ to be the collection of totally disconnected self-affine carpets,
and ${\mathcal M}_t(n,m)$ to be the sub-collection of ${\mathcal M}_{t}$ with expanding matrix $\diag(n,m)$.
Let $\#A$ denote the cardinality of a set $A$.
For a digit set ${\mathcal D}\subset \{0,1,\dots, n-1\}\times\{0,1,\dots, m-1\}$, we define
\begin{equation}
a_j=\#\{i;~(i,j)\in {\cal D}\}, \quad 0\leq j\leq m-1,
\end{equation}
 and  call $(a_j)_{j=0}^{m-1}$
the \emph{distribution sequence} of ${\cal D}$, or of $K(n,m,{\cal D})$.
We denote
$$\sigma=\log m/\log n, \ N=\# \mathcal{D} \text{ and }s=\#\{ j\in \{0,1,\dots, m-1\};~ a_j>0 \}.$$

Clearly, the Hausdorff, box, and  Assouad dimensions are all Lipschitz invariants.
 (The first two dimensions are computed by Bedford \cite{Bed84} and Mcmullen\cite{M84}, while
 the third one is computed by J. Mackay \cite{MM11}.) A set $K$ is said to be \emph{regular}, if $\dim_H K=\dim_B K$, see Falconer \cite{Fal90};  clearly, the regularity property is   a Lipschitz invariant.
A self-affine carpet  is regular if and only if
  it has uniform horizontal fibers, that is , all non-zero $a_j$ are equal (\cite{Bed84, M84}).

 Up to now, there are two papers on the Lipschitz classification of self-affine carpets.
 Under a certain vertical separation condition,  Li, Li and Miao\cite{Miao2013}   proved that
if $E, F\in {\cal M}_t(n,m)$    share the same
distribution sequence,  then $E\sim F$.

For a digit set $\SD$, we say the $j$-th row of $\SD$ is \emph{vacant} if $a_j=0$.
   Miao, Xi and Xiong\cite{Miao2017}  showed that
if two self-affine carpets are totally disconnected and are Lipschitz equivalent,
 then  either both of them possess vacant rows or neither  of them does.

In the study of  Lipschitz classification of self-similar sets, the Hausdorff measure is a useful tool.
However, Peres \cite{PY94} proved that if a self-affine carpet is not regular, then its Hausdorff measure (in its  dimension) is always infinity. In the present paper,   we will make use
 of the uniform Bernoulli measure  instead of the Hausdorff measure.

Let $E=K(n,m,\SD)$   and let $\bp=(p_d)_{d\in \SD}$ be a probability weight.
Then there is a unique Borel probability measure $\mu_\bp$ on E satisfying
\begin{equation}\label{def_mu}
\mu_{\bp}(\cdot)=\sum_{d\in \SD} p_d \mu_\bp\circ S_d^{-1}(\cdot)
\end{equation}
 and we call $\mu_\bp$ a \emph{self-affine measure}, or a \emph{Bernoulli measure} (\cite{Hut81}).
We denote by $\mu_E$ the self-affine measure with the weight
$p_d=1/N$ for all $d\in \SD$, and call it  the \emph{uniform Bernoulli measure} of $E$.
The main concern of the present paper is to develop Lipschitz invariants related to the uniform Bernoulli measure.
In the following we describe the main results of the present paper.

For $\bi=d_1\dots d_k\in \SD^k$, we define
$S_{\bi}(z)=S_{d_1}\circ\cdots \circ S_{d_k}(z)$;
we  call  $S_{\bi}([0,1]^2)$ a  \emph{basic rectangle} of rank $k$, and call    $E_{\mathbf i}=S_{\mathbf i}(E)$ a \emph{cylinder} of rank $k$.
We show that for every cylinder $E_{\bi}$ and every $\delta>0$,  $\mu_E(E_{\bi})\delta^{-\dim_B E}$ gives a very accurate estimate of the number of $\delta$-mesh boxes intersecting $E_{\bi}$ (Theorem \ref{thm:count}). This leads to the following

\begin{theorem}\label{thm:equivalent} Let $E, F\in {\cal M}_t(n,m)$.
If $f:~E\to F$ is a bi-Lipschitz map, then $\mu_F\circ f$ is equivalent to
$\mu_E$, namely, there exists $\zeta>0$ such that
\begin{equation}\label{eq:AC}
\zeta^{-1} \mu_E(A) \leq      \mu_F(f(A))\leq \zeta \mu_E(A)
\end{equation}
for any Borel set $A\subset E$.
\end{theorem}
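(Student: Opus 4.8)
The plan is to derive \eqref{eq:AC} from the box-counting estimate announced in Theorem \ref{thm:count}, which says that for a cylinder $E_{\bi}$ and scale $\delta$, the number $\mathcal N_\delta(E_{\bi})$ of $\delta$-mesh boxes meeting $E_{\bi}$ is comparable to $\mu_E(E_{\bi})\,\delta^{-\dim_B E}$, with comparison constants independent of $\bi$ and $\delta$. Since $f$ is bi-Lipschitz with constant $C$, for any $x\in E$ and $r>0$ we have $B(f(x),C^{-1}r)\cap F\subseteq f(B(x,r)\cap E)\subseteq B(f(x),Cr)\cap F$; hence a $\delta$-mesh box meeting $E_{\bi}$ has image contained in a ball of radius comparable to $\delta$, and conversely, so that $\mathcal N_\delta(E_{\bi})$ and $\mathcal N_\delta(f(E_{\bi}))$ are comparable up to a multiplicative constant depending only on $C$ (and $n,m$). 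Combining the two uses of Theorem \ref{thm:count}, one for $\mu_E$ on $E$ and one for $\mu_F$ on $F$, gives $\mu_F(f(E_{\bi}))\asymp \mu_E(E_{\bi})$ for every cylinder $E_{\bi}$, which is \eqref{eq:AC} restricted to cylinders.

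The second step is to bootstrap from cylinders to arbitrary Borel sets. The cylinders $\{E_{\bi}:\bi\in\SD^k\}$ over all $k$ form a semi-algebra generating the Borel $\sigma$-algebra of $E$, and $f$ maps cylinders of $E$ to sets that are ``almost cylinders'' of $F$ in the sense that each $f(E_{\bi})$ is sandwiched between two cylinders of comparable $\mu_F$-measure; this is where total disconnectedness enters, guaranteeing that small cylinders have small diameter and that the partition into rank-$k$ cylinders genuinely separates points as $k\to\infty$. From $\mu_F(f(E_{\bi}))\le\zeta\,\mu_E(E_{\bi})$ for all cylinders and the fact that $\mu_E$ and $\mu_F$ are outer-regular, one gets $\mu_F(f(A))\le\zeta\,\mu_E(A)$ first for open sets (cover by cylinders), then for all Borel sets by approximation; applying the same argument to $f^{-1}$ (which is also bi-Lipschitz) yields the reverse inequality, and adjusting $\zeta$ gives \eqref{eq:AC}.

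I expect the main obstacle to be the first step: making precise that a bi-Lipschitz image of a cylinder is controlled by cylinders of comparable measure. The subtlety is that $f(E_{\bi})$ need not be a cylinder of $F$, nor even a union of cylinders of a fixed rank; one must choose the comparison rank for $F$ as a function of the diameter of $E_{\bi}$ (roughly, rank $k$ in $E$ corresponds to diameter $\sim m^{-k}$ in the short direction and $n^{-k}$ in the long direction, so the anisotropy of the carpet must be handled in both source and target). This is exactly the point where the uniformity of the constants in Theorem \ref{thm:count} — independence of the cylinder and of $\delta$ — is essential, since we need to sum the estimate over many cylinders of possibly very different ``shapes'' without accumulating error. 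Once the cylinder-level comparison is established with uniform constants, the extension to Borel sets is a routine measure-theoretic limiting argument.
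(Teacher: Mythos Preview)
Your overall strategy---box counting via Theorem~\ref{thm:count}, bi-Lipschitz invariance of box counts, then a measure-theoretic extension---is the paper's strategy too. The genuine gap is precisely where you locate it, but the fix you propose (``just use the uniformity of the constants in Theorem~\ref{thm:count}'') does not work. Theorem~\ref{thm:count} gives $N_\delta(R\cap F)\asymp \mu_F(R)\,\delta^{-\dim_B F}$ only when $R$ is a \emph{basic rectangle} (or a finite union of such). For an arbitrary subset $A\subset F$ there is no inequality $N_\delta(A)\gtrsim \mu_F(A)\,\delta^{-\dim_B F}$: when $F$ is irregular, an approximate square of rank $k$ with ``heavy'' colour has $N_\delta$-count of order $1$ but $\mu_F$-measure far exceeding $\delta^{\dim_B F}$. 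Since $f(E_{\bi})$ is a priori just such an arbitrary set, you cannot apply Theorem~\ref{thm:count} on the $F$ side, and your chain of comparisons breaks.

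What is missing is a structural lemma of Falconer--Marsh type (Lemma~\ref{lem:Falconer} in the paper): if $U$ is a \emph{connected component} of $\widetilde{\mathbf E}_k$, then $f(U\cap E)=\bigcup_{j}(J_j\cap F)$ \emph{exactly}, where each $J_j$ is a connected component of $\widetilde{\mathbf F}_{k+p}$ and hence a finite union of basic rectangles of rank $k+p$. Now Theorem~\ref{thm:count} applies to each $J_j$, summing yields $N_\delta\big(f(U\cap E)\big)\gtrsim \mu_F\big(f(U\cap E)\big)\delta^{-\dim_B F}$, and your box-count comparison closes the loop (this is Theorem~\ref{main_th1}). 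The proof of Lemma~\ref{lem:Falconer} uses that distinct components of $\widetilde{\mathbf E}_k$ are at distance $\geq n^{-k}$, which in turn relies on Theorem~\ref{thm:finite-type} (bounded number of basic rectangles per component); this separation argument would fail for cylinders $E_{\bi}$, since adjacent cylinders can touch. So you must replace cylinders by connected components throughout. They, too, generate the Borel $\sigma$-algebra of $E$ (each $U\cap E$ is clopen and diameters tend to zero), so your extension step goes through unchanged.
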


As a consequence of the  above theorem, we have

\begin{cor}\label{cor:spectra} If $E, F\in {\cal M}_t(n,m)$ and $E\sim F$, then
$(i)$ $\mu_E$ and $\mu_F$ have the same multifractal spectrum;
$(ii)$  $\mu_E$ is doubling  if and only if $\mu_F$ is doubling.
 \end{cor}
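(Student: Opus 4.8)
The plan is to derive both statements directly from Theorem \ref{thm:equivalent}, which already gives a bi-Lipschitz map $f:E\to F$ together with a constant $\zeta>0$ such that $\zeta^{-1}\mu_E(A)\le \mu_F(f(A))\le \zeta\mu_E(A)$ for every Borel $A\subset E$. Since $f$ is bi-Lipschitz, there is a further constant $C>0$ with $C^{-1}d(x,y)\le d(f(x),f(y))\le C\,d(x,y)$; in particular $f$ maps a ball $B(x,r)\cap E$ into $B(f(x),Cr)\cap F$ and contains $B(f(x),C^{-1}r)\cap F$, and these inclusions are the only geometric facts we need.

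For part $(ii)$, recall that $\mu_E$ is doubling if there is a constant $D$ with $\mu_E(B(x,2r))\le D\,\mu_E(B(x,r))$ for all $x\in E$ and all $r>0$. Suppose $\mu_E$ is doubling with constant $D$. Given $y\in F$ and $r>0$, write $y=f(x)$. Then
\begin{align*}
\mu_F(B(y,2r)\cap F)
&\le \mu_F\bigl(f(B(x,2Cr)\cap E)\bigr)
\le \zeta\,\mu_E(B(x,2Cr)\cap E)\\
&\le \zeta D^{k}\,\mu_E(B(x,C^{-1}r\cdot 2^{-1})\cap E)
\le \zeta^2 D^{k}\,\mu_F\bigl(f(B(x,C^{-1}r/2)\cap E)\bigr)\\
&\le \zeta^2 D^{k}\,\mu_F(B(y,r)\cap F),
\end{align*}
where $k$ is the fixed number of doublings needed to pass from radius $C^{-1}r/2$ up to $2Cr$, i.e. $k=\lceil \log_2(4C^2)\rceil$, independent of $x$ and $r$. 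Hence $\mu_F$ is doubling with constant $\zeta^2 D^{k}$. By symmetry (apply the same argument to $f^{-1}$, which is bi-Lipschitz and satisfies the analogue of \eqref{eq:AC} with $\mu_E$ and $\mu_F$ interchanged), $\mu_F$ doubling implies $\mu_E$ doubling, proving $(ii)$.

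For part $(i)$, recall that the multifractal spectrum is the function $\tau\mapsto \dim_H E(\tau)$, where $E(\tau)=\{x\in E:\lim_{r\to0}\log\mu_E(B(x,r))/\log r=\tau\}$ (and likewise for $F$). Fix $x\in E$ and set $y=f(x)$. Using the ball inclusions above together with \eqref{eq:AC},
\[
\zeta^{-1}\mu_E(B(x,C^{-1}r)\cap E)\le \mu_F(B(y,r)\cap F)\le \zeta\,\mu_E(B(x,Cr)\cap E),
\]
so dividing by $\log r<0$ and letting $r\to0$, the additive constant $\log\zeta$ and the bounded shifts $\log C$ in the radius both wash out, giving $\lim_{r\to0}\log\mu_F(B(y,r))/\log r=\tau$ precisely when $\lim_{r\to0}\log\mu_E(B(x,r))/\log r=\tau$. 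Thus $f(E(\tau))=F(\tau)$ for every $\tau$, and since $f$ is bi-Lipschitz it preserves Hausdorff dimension, so $\dim_H E(\tau)=\dim_H F(\tau)$ for all $\tau$; the two spectra coincide.

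I do not anticipate a genuine obstacle here: once Theorem \ref{thm:equivalent} is in hand, the corollary is a routine transfer argument, the only mild subtlety being the bookkeeping of the radius distortion $C$ in the doubling estimate, which is absorbed into a fixed finite number of doubling steps.
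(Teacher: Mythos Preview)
Your proof is correct and is essentially the same approach as the paper's: the paper factors the argument through a general Lemma~\ref{thm:equivalence} (bi-Lipschitz equivalence of measures transfers both the spectrum and the doubling property) whose proof it omits as obvious, and then invokes Theorem~\ref{thm:equivalent}; what you have written is precisely the omitted proof of that lemma, specialized to $\mu_E$ and $\mu_F$.
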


 \begin{remark}\label{rem:Wen}{\rm
  A   measure $\mu$ on a metric space $X$
is said to be \emph{doubling} if there is a constant $C\geq 1$ such that
$0<\mu(B(x,2r))\leq C\mu(B(x,r))<\infty$
for all balls $B(x,r)\subset X$ of radius $r$.
Li, Wei and Wen \cite{LWW16} characterized when a  Bernoulli measure  on a self-affine carpet is doubling.
According to their result, the uniform Bernoulli measure   $\mu_E$  is doubling
 if and only if  either  (i) $a_0a_{m-1}=0$, or (ii) $a_ja_{j+1}=0$ for all $0\leq j\leq m-2$, or  (iii) $a_0=a_{m-1}$.
}
\end{remark}

The multifractal spectrum of self-affine measures on self-affine carpets was first studied by
King \cite{King95}.
King obtained a formula of the multifractal spectrum under a certain separation condition.
 Barral and Mensi \cite{Bar} relaxed the condition, and
Jordan and Ram \cite{JR11} completely removed the condition.
Olsen\cite{O98} considered the multifractal analysis of the higher dimensional self-affine sponges.

Using  the spectrum formula of King,  we characterize when $\mu_E$ and $\mu_F$,
  where $E=K(n,m, \SD)$ and $F=K(n,m,\SD')$,
 have the same multifractal spectrum.
We use $(a_j)_{j=0}^{m-1}$ and $(b_j)_{j=0}^{m-1}$ to denote the distribution sequences
of $\SD$ and $\SD'$, respectively. Denote
$N'=\#\SD'$ and $s'=\#\{j;~b_j>0\}$. Let
$$a_1^*>  a_2^*> \cdots>  a_{\tilde p}^*$$
be  the distinct non-zero terms of   $(a_j)_{j=0}^{m-1}$ and  let
 $M_i$  be  the occurrence of $a_i^*$; similarly, let
$
b_1^* > b_2^*>\cdots>  b_{\tilde q}^*
$
be the  distinct non-zero terms of     $(b_j)_{j=0}^{m-1}$, and let
  $M'_i$  be the occurrence of $b_i^*$.

\begin{thm}\label{thm:invariants}
Let $E=K(n,m,\SD)$ and $F=K(n,m,\SD')$ be two self-affine carpets. Then $\mu_E$ and $\mu_F$ have the same multifractal spectrum if and only if
\begin{equation}\label{eq:invariants}
\tilde p=\tilde q \ \  \text{ and } \ \
\frac{a_i^*}{b_i^*}=\left (\frac{M_i'}{M_i}\right )^{1/\sigma}=\left(\frac{s'}{s}\right )^{1/{\sigma}}=\left(\frac{N}{N'}\right)^{1/(1-\sigma)},  ~~\text{for }~~ i=1,\dots, \tilde p.
\end{equation}
\end{thm}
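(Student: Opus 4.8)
The plan is to turn King's multifractal formula into a Legendre--Fenchel duality statement, after which comparing the spectra of $\mu_E$ and $\mu_F$ becomes comparing two linear combinations of exponential functions $t\mapsto\lambda^{t}$, whose representations are unique. Throughout, $\sigma=\log m/\log n$ is fixed. First I would record King's formula in the shape it takes for the \emph{uniform} Bernoulli measure: since uniform weights make $\mu_E$ insensitive to the choice of columns within a row, it collapses to
\[
f_E(\alpha)=\frac{L_\alpha}{\log n}+\frac{\eta_E(L_\alpha)}{\log m},\qquad L_\alpha:=\frac{\log N-\alpha\log m}{1-\sigma},\qquad \alpha\in[\alpha_{\min},\alpha_{\max}],
\]
where $\alpha\mapsto L_\alpha$ is an affine decreasing bijection of $[\alpha_{\min},\alpha_{\max}]$ onto $[\log a^*_{\tilde p},\log a^*_1]$, and $\eta_E(u)=\max\{-\sum_{j:\,a_j>0}\beta_j\log\beta_j:\ \sum_j\beta_j=1,\ \sum_j\beta_j\log a_j=u,\ \beta_j\ge0\}$ is the ``row entropy function''. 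By Lagrange duality $\eta_E$ is the concave Legendre--Fenchel conjugate of the convex function $t\mapsto\log A_E(t)$, where $A_E(t):=\sum_{j:\,a_j>0}a_j^{\,t}=\sum_{i=1}^{\tilde p}M_i(a^*_i)^{t}$; that is, $\eta_E(u)=\inf_t\big(\log A_E(t)-tu\big)$ and conversely $\log A_E(t)=\sup_u\big(tu+\eta_E(u)\big)$. The same holds for $F$, with $A_F(t)=\sum_{i=1}^{\tilde q}M'_i(b^*_i)^{t}$.

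Next I would reduce the identity $f_E=f_F$ to a single translation relation between $\eta_E$ and $\eta_F$. Equality of the spectra forces equality of their domains $[\alpha_{\min},\alpha_{\max}]=[\alpha'_{\min},\alpha'_{\max}]$; writing the endpoints out explicitly, this gives
\[
c:=\frac{1}{1-\sigma}\log\frac{N'}{N}=\log\frac{b^*_1}{a^*_1}=\log\frac{b^*_{\tilde q}}{a^*_{\tilde p}},\qquad\text{i.e.}\qquad \rho:=e^{-c}=\frac{a^*_1}{b^*_1}=\Big(\frac{N}{N'}\Big)^{1/(1-\sigma)}.
\]
On the common domain the two instances of King's formula read $\frac{u}{\log n}+\frac{\eta_E(u)}{\log m}=\frac{u+c}{\log n}+\frac{\eta_F(u+c)}{\log m}$, and since $\frac1{\log n}=\frac{\sigma}{\log m}$ the affine parts cancel, leaving $\eta_E(u)=\eta_F(u+c)+\sigma c$ for all $u\in[\log a^*_{\tilde p},\log a^*_1]$; as $[\log a^*_{\tilde p},\log a^*_1]+c=[\log b^*_{\tilde q},\log b^*_1]$ is the effective domain of $\eta_F$, this relates the two conjugates over their whole ranges.

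Then I would conjugate this relation back: substituting $v=u+c$ in $\log A_E(t)=\sup_u(tu+\eta_E(u))$ gives $\log A_E(t)=\log A_F(t)+(\sigma-t)c$ for all $t$, equivalently
\[
\sum_{i=1}^{\tilde p}M_i(a^*_i)^{t}=\rho^{\,t-\sigma}\sum_{i=1}^{\tilde q}M'_i(b^*_i)^{t}=\sum_{i=1}^{\tilde q}\big(\rho^{-\sigma}M'_i\big)(\rho\,b^*_i)^{t}\qquad(t\in\mathbb{R}).
\]
Both sides are positive combinations of the linearly independent functions $t\mapsto\lambda^{t}$ over the strictly decreasing positive bases $a^*_1>\dots>a^*_{\tilde p}$, resp.\ $\rho b^*_1>\dots>\rho b^*_{\tilde q}$, so the two representations coincide termwise: $\tilde p=\tilde q$, and $a^*_i=\rho\,b^*_i$, $M_i=\rho^{-\sigma}M'_i$ for $i=1,\dots,\tilde p$. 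These immediately give $a^*_i/b^*_i=\rho$, $(M'_i/M_i)^{1/\sigma}=\rho$, $(s'/s)^{1/\sigma}=(\sum_i M'_i/\sum_i M_i)^{1/\sigma}=\rho$ and $(N/N')^{1/(1-\sigma)}=(\sum_i M_i a^*_i/\sum_i M'_i b^*_i)^{1/(1-\sigma)}=\rho$, which is exactly \eqref{eq:invariants}. The converse is the same chain run backwards: \eqref{eq:invariants} with common ratio $\rho$ yields $A_E(t)=\rho^{t-\sigma}A_F(t)$ by a one-line computation, conjugation gives the translation relation for $\eta_E,\eta_F$, the domains then align, and King's formula returns $f_E\equiv f_F$.

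The step I expect to be the main obstacle is the first one: extracting from King's theorem, together with the refinements of Barral--Mensi and Jordan--Rams, the precise affine-reparametrized Legendre-dual normal form above --- in particular the exact description of the interval $[\alpha_{\min},\alpha_{\max}]$, which is what determines the shift constant $\rho$ and what guarantees that $\eta_E$ is finite on $[\log a^*_{\tilde p},\log a^*_1]$ and $\log A_E$ is finite and convex on $\mathbb{R}$, so that the biconjugation is legitimate. Once this normal form is in hand, the remainder is the (clean) uniqueness of exponential-sum representations plus routine bookkeeping with the constants $\sigma$, $c$, $\rho$.
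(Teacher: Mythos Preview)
Your proposal is correct and follows the same overarching strategy as the paper --- pass from $h_E=h_F$ to an identity between generating functions via Legendre duality, then read off the coefficients --- but the execution differs in two places worth noting.

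First, to reach the generating-function identity the paper stays with the implicit King function $\beta_E(t)$ defined by $m^{\beta_E(t)}N^{-t}\sum_{j}a_j^{\sigma+(1-\sigma)t}=1$ and simply invokes biconjugation for concave functions: $h_E=h_F\Rightarrow\beta_E=\beta_F$, which unpacks immediately to $N^{-t}\sum_j a_j^{\sigma+(1-\sigma)t}=(N')^{-t}\sum_j b_j^{\sigma+(1-\sigma)t}$. Your route via the explicit entropy function $\eta_E$ and the affine reparametrisation $L_\alpha$ lands on the equivalent identity $A_E(t)=\rho^{\,t-\sigma}A_F(t)$ (just substitute $s=\sigma+(1-\sigma)t$ to see they coincide), but at the cost of the ``normal form'' you flag as the main obstacle. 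In fact that obstacle evaporates: your normal form is nothing more than the paper's $\beta_E$ formula after the same affine change of variable, so the paper's one-line biconjugation argument shows you can bypass the explicit $\eta_E/L_\alpha$ machinery entirely.

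Second, for the coefficient extraction the paper rewrites the identity as $\sum_j M_j(a_j^*)^\sigma x_j^t=\sum_j M_j'(b_j^*)^\sigma y_j^t$, takes $k$-th derivatives, and matches leading exponential terms inductively. Your appeal to linear independence of the functions $t\mapsto\lambda^t$ is cleaner and does the same job in one stroke. A minor point: the paper treats the regular case $\tilde p=\tilde q=1$ separately (the spectrum degenerates to a single point and the domain argument becomes trivial); your argument handles it formally but you should say a word about it, since the Legendre transform of a function supported on a singleton needs the obvious interpretation.
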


\begin{remark}\label{cor:dimensions}{\rm Using the above theorem, it is easy to show  that if  $\mu_E$ and $\mu_F$ have the same multifractal spectrum, then
 $E$ and $F$ share the same Hausdorff, box, and Assouad dimensions.
 So the above multifractal spectrum is a stronger invariant than the dimensions.
 }
 \end{remark}

In the following, we confine our study to self-affine carpets which are totally disconnected, possess vacant rows and the uniform Bernoulli measures are doubling; we
   use   ${\mathcal M}_{t,v,d}(n,m)$ to  denote the set of such  carpets.
For such  carpet $E$, we show that   $\mu_E$ satisfies an `arithmetic' doubling property, that is, if two approximate squares of $E$
are not far from each other, then the ratio of their measures is a rational number with a fixed denominator
(Lemma \ref{lem:UU'}).

Measure preserving property of bi-Lipschitz maps between Cantor sets was first observed by Cooper and Pignataro \cite{CP88} and
Falconer and Marsh \cite{FaMa92}. It is extended to   general self-similar sets  by Xi and Ruan \cite{XR08},
and plays a significant r\^ole in  many works (see \cite{RRW12, RZ15}).
Thanks to the arithmetic doubling property, we show that

\begin{theorem}\label{thm:measure}   Let  $E, F\in {\cal M}_{t,v,d}(n,m)$. If $f:~E\to F$ is a bi-Lipschitz map, then
there exists a cylinder  $E_{\bi}$ such that
$f:~({E_{\bi}},\mu_E)\to (f(E_{\bi}),\mu_F)$
is  measure preserving in the sense that, for any Borel subset $B\subset E_{\bi}$,
$$
\frac{ \mu_F (f(B))}{\mu_E(B)}=\frac{\mu_F (f(E_{\bi}))}{\mu_E(E_{\bi})}.
$$
\end{theorem}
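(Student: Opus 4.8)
The plan is to combine the absolute continuity from Theorem \ref{thm:equivalent} with the arithmetic doubling property (Lemma \ref{lem:UU'}) to force the Radon--Nikodym derivative of $\mu_F\circ f$ with respect to $\mu_E$ to be \emph{locally constant}, and then to localize to a cylinder on which it is genuinely constant. By Theorem \ref{thm:equivalent}, there is $\zeta>0$ with $\zeta^{-1}\mu_E(A)\le\mu_F(f(A))\le\zeta\mu_E(A)$ for all Borel $A\subset E$; hence $\nu:=\mu_F\circ f$ is a measure equivalent to $\mu_E$, and the derivative $g=d\nu/d\mu_E$ satisfies $\zeta^{-1}\le g\le\zeta$ $\mu_E$-a.e. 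For each cylinder $E_{\bi}$ put $R(\bi):=\nu(E_{\bi})/\mu_E(E_{\bi})=\mu_F(f(E_{\bi}))/\mu_E(E_{\bi})$; these are the martingale averages of $g$ along the filtration generated by cylinders, so $R(\bi)\to g$ $\mu_E$-a.e. along shrinking cylinders. The goal is to show that the set of values $\{R(\bi)\}$ is finite, so that $g$ takes finitely many values, each on a union of cylinders, and then to pick $E_{\bi}$ inside a level set of $g$.

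The main step is to control the ratios $R(\bi)$ using the geometry of the bi-Lipschitz map together with Lemma \ref{lem:UU'}. First I would pass from cylinders to approximate squares: a bi-Lipschitz map distorts Euclidean distances by a bounded factor, so the image $f(E_{\bi})$ of a cylinder is comparable to a union of a bounded number of approximate squares of $F$ of comparable ``size'', and similarly $f^{-1}$ of an approximate square of $F$ is covered by boundedly many approximate squares of $E$. Using Theorem \ref{thm:count} (the accurate count of $\delta$-mesh boxes meeting a cylinder in terms of $\mu_E(E_{\bi})\delta^{-\dim_B E}$), this comparison can be made quantitative: $\mu_F(f(E_{\bi}))$ is, up to a bounded multiplicative error, a sum of measures of approximate squares of $F$, all of which lie within bounded distance of each other (because they cover the connected-in-scale image of one cylinder). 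Lemma \ref{lem:UU'} then says that ratios of measures of approximate squares that are not far apart are rationals with a fixed bounded denominator, say $q_0$. Combined with the two-sided bound $\zeta^{-1}\le R(\bi)\le\zeta$, this confines each $R(\bi)$ to the finite set of rationals in $[\zeta^{-1},\zeta]$ with denominator dividing a fixed integer; hence $\{R(\bi):\bi\}$ is finite.

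Once $g$ is known to take finitely many values $c_1,\dots,c_\ell$, the level sets $A_k=\{g=c_k\}$ partition $E$ up to $\mu_E$-null sets, and by the martingale convergence each $A_k$ agrees $\mu_E$-a.e.\ with a union of cylinders; choosing $k$ with $\mu_E(A_k)>0$ and a cylinder $E_{\bi}$ with $\mu_E(E_{\bi}\setminus A_k)=0$ (possible because cylinder measures are positive and the $\mu_E$-density of $A_k$ is $1$ at $\mu_E$-a.e.\ point of $A_k$, and densities along cylinders converge) gives $g\equiv c_k$ on $E_{\bi}$ up to a null set. For such $E_{\bi}$ and any Borel $B\subset E_{\bi}$ we get $\mu_F(f(B))=\nu(B)=\int_B g\,d\mu_E=c_k\mu_E(B)$ and likewise $\mu_F(f(E_{\bi}))=c_k\mu_E(E_{\bi})$, which is exactly the asserted measure-preserving identity. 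The hard part will be making the geometric comparison in the second paragraph rigorous, namely controlling how a bi-Lipschitz image of a single cylinder decomposes into approximate squares of $F$ that are mutually close, so that Lemma \ref{lem:UU'} applies with a uniform denominator; everything downstream is soft (martingale convergence and the pigeonhole that a finite-valued density must be constant on some cylinder).
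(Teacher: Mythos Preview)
Your proposal has a genuine gap at its central step. You claim that Lemma~\ref{lem:UU'} forces each $R(\bi)=\mu_F(f(E_{\bi}))/\mu_E(E_{\bi})$ to lie in a fixed finite set of rationals with bounded denominator. But Lemma~\ref{lem:UU'} only controls ratios \emph{within one carpet}: it bounds the denominators of $\mu_E(U')/\mu_E(U)$ (or of $\mu_F(V')/\mu_F(V)$), not of the cross--ratio $\mu_F(f(U\cap E))/\mu_E(U)$. Explicitly, $\mu_E$ of a rank--$k$ approximate square lies in $N^{-\ell(k)}\Z$ while $\mu_F$ of a rank--$k'$ one lies in $(N')^{-\ell(k')}\Z$, so $R(\bi)$ a~priori carries a factor of the form $N^{\ell(k)}/(N')^{\ell(k')}$, and there is no reason for its reduced denominator to stay bounded as $k\to\infty$ when $N\neq N'$. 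Even if you upgrade your argument to show that every \emph{successive} ratio $\tau(U_h)/\tau(U)$ lies in a fixed finite set $\Gamma$ of positive rationals (which is in fact what the paper establishes), this still does not make $\{R(\bi)\}$ finite: products of elements of $\Gamma$ constrained to $[\zeta^{-1},\zeta]$ can be infinite (e.g.\ $\Gamma\supset\{2,1/3\}$ already gives infinitely many values $2^a/3^b$ in any fixed interval, since $\log 2/\log 3\notin\Q$). Thus the martingale/finite--range step cannot be closed, and the ``hard part'' you flag is not the geometric decomposition but this arithmetic issue.

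The paper avoids this altogether via an extremal argument. With $\tau(U)=\mu_F(f(U\cap E))/\mu_E(U)$ and $\lambda=\sup_{U\in\mathcal{C}_E}\tau(U)$, one argues by contradiction that $\lambda$ must be attained. Pick $U\in\mathcal{C}_{E,k}$ with $\tau(U)>\lambda(1-\epsilon)$ and a direct offspring $U_h$ with $\tau(U_h)>\tau(U)$. The key point, supplied by Lemma~\ref{lem:Falconer+}, is that $f(U\cap E)$ and $f(U_h\cap E)$ both decompose into components of $\mathbf F$ that are descendants of one \emph{common} component $J^*\in\mathbf F_{k-p_0}$; by Lemma~\ref{lem:UU'}(ii) their $\mu_F$--measures are then integer multiples of the \emph{same} unit $\alpha=\mu_F(J^*)/H^{2p_0+1}$. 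Combining this with Lemma~\ref{lem:UU'}(ii) for $\mu_E(U_h)/\mu_E(U)$ makes the ratio $\tau(U_h)/\tau(U)$ a rational of uniformly bounded height, hence $\tau(U_h)/\tau(U)\ge 1+H^{-2p_0-2}$ once it exceeds $1$; for suitable $\epsilon$ this forces $\tau(U_h)>\lambda$, a contradiction. The idea you are missing is precisely this use of a common $F$--ancestor to produce a common $\mu_F$--unit at two adjacent scales, which controls \emph{consecutive ratios} $\tau(U_h)/\tau(U)$ rather than the absolute values $R(\bi)$.
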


Using Theorem \ref{thm:measure}, by a number theoretical argument, we obtain an invariant   stronger than  the multifractal spectrum
  when   $\sigma=\log m/\log n$ is irrational.

\begin{theorem}\label{thm:irrational}  Let  $E, F\in {\cal M}_{t,v,d}(n,m)$
and assume that  $\log m/\log n\in \Q^c$. If $E\sim F$,  then the distribution sequence of $E$  is a  permutation of that of $F$.
\end{theorem}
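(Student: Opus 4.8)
Since $E\sim F$, Corollary~\ref{cor:spectra} gives that $\mu_E$ and $\mu_F$ have the same multifractal spectrum, so Theorem~\ref{thm:invariants} applies: $\tilde p=\tilde q$ and there is a constant $r:=a_i^*/b_i^*$, independent of $i$, with $r^{\sigma}=s'/s$, $r^{1-\sigma}=N/N'$ and $M_i'/M_i=s'/s$ for all $i$. Because $a_1^*>\cdots>a_{\tilde p}^*$ and $b_1^*>\cdots>b_{\tilde q}^*$ are strictly decreasing, the distribution sequence of $E$ is a permutation of that of $F$ exactly when $\tilde p=\tilde q$, $a_i^*=b_i^*$ and $M_i=M_i'$ for all $i$ (the number of vacant rows $m-s=m-s'$ is then automatic), and this is equivalent to $r=1$, since $r=1\Leftrightarrow s=s'\Leftrightarrow M_i=M_i'$. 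Hence the theorem reduces to: if $\log m/\log n\in\mathbb Q^c$ then $r=1$. Note that $r^{\sigma}=s'/s\in\mathbb Q$ and $r^{1-\sigma}=N/N'\in\mathbb Q$ by themselves only give $r\in\mathbb Q$ and $r^{\sigma}\in\mathbb Q$, which does \emph{not} force $r=1$ for irrational $\sigma$ (e.g.\ $r=6$, $\sigma=\log 2/\log 6$); the extra rigidity must come from Theorem~\ref{thm:measure}.

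\textbf{The measure-preserving identity on approximate squares.}
By Theorem~\ref{thm:measure} there are a cylinder $E_{\bi}$ and a constant $\rho$ with $\mu_F(f(B))=\rho\,\mu_E(B)$ for every Borel $B\subseteq E_{\bi}$. For an approximate square $U$ of $E$ of level $\ell$ with $U\subseteq E_{\bi}$ one has $\mu_E(U)=N^{-\ell}\prod_i(a_i^*/N)^{c_i}$ with $\sum_i c_i=\lceil\ell/\sigma\rceil-\ell$; choosing $U$ to be a \emph{fattest} one — all of its relevant $y$-rows equal to a fixed row realising $a_1^*$, of which there are many inside $E_{\bi}$ — gives $\mu_E(U)=N^{-\ell}(a_1^*/N)^{\lceil\ell/\sigma\rceil-\ell}$. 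Since $f$ is a homeomorphism between totally disconnected compacta and approximate squares are clopen, $f(U)$ is clopen, hence a finite disjoint union of cylinders of $F$, so $\mu_F(f(U))$ is a finite sum of numbers $(N')^{-\ell'}$. Using the sharp box-count of Theorem~\ref{thm:count}, the bi-Lipschitz bounds, and the arithmetic doubling property (Lemma~\ref{lem:UU'}), one shows — after replacing $E_{\bi}$ by a deeper cylinder if necessary — that $f(U)$ is, up to a union of boundedly many cylinders and up to a correction factor $\kappa(U)$, a fattest approximate square $V$ of $F$ of level $\ell+c(\ell)$ with $c(\ell)$ in a fixed finite range, where $\kappa(U)$ is a rational with denominator bounded by Lemma~\ref{lem:UU'} and, by the comparability of $\mu_E$ and $\mu_F\circ f$ (Theorem~\ref{thm:equivalent}), also bounded above and below, hence ranging over a finite set of rationals. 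The measure-preserving identity then reads
\[
(N')^{-(\ell+c)}\bigl(b_1^*/N'\bigr)^{\lceil(\ell+c)/\sigma\rceil-(\ell+c)}\;=\;\kappa(U)\,\rho\,N^{-\ell}\bigl(a_1^*/N\bigr)^{\lceil\ell/\sigma\rceil-\ell}.
\]

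\textbf{The number-theoretic step.}
Substituting $N/N'=r^{1-\sigma}$ and $a_1^*=r\,b_1^*$ (so that $b_1^*/N'=(a_1^*/N)r^{-\sigma}$ and $r^{1-\sigma}N'=N$) into the last identity and solving for $\rho$, the terms linear in $\ell$ cancel (they encode only $\dim_B E=\dim_B F$, cf.\ Remark~\ref{cor:dimensions}) and one is left with an identity of the shape
\[
\rho\;=\;\kappa(U)^{-1}\,N^{-c}\,\bigl(a_1^*/N\bigr)^{\delta_\ell-c}\;r^{-\sigma\left(1-\{(\ell+c)/\sigma\}\right)},
\qquad
\delta_\ell:=\bigl\lceil(\ell+c)/\sigma\bigr\rceil-\bigl\lceil\ell/\sigma\bigr\rceil .
\]
Now $\rho$ is fixed; $c=c(\ell)$ ranges over a finite set, $\delta_\ell$ over a finite set (for $\sigma\notin\mathbb Q$ only two consecutive integers occur), and $\kappa(U)$ over a finite set of rationals; hence everything on the right except the factor $r^{-\sigma(1-\{(\ell+c)/\sigma\})}$ lies in a fixed finite set, forcing this factor to lie in a fixed finite set as well. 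But $\ell+c(\ell)\to\infty$, and by irrationality of $\sigma$ the numbers $\{(\ell+c(\ell))/\sigma\}$ take infinitely many distinct values (otherwise some difference $(\ell-\ell')/\sigma$ would be a nonzero integer, making $\sigma$ rational). If $r\neq1$ then $\beta\mapsto r^{-\sigma(1-\beta)}$ is injective, so $r^{-\sigma(1-\{(\ell+c)/\sigma\})}$ would assume infinitely many values, a contradiction. Therefore $r=1$, whence $s=s'$, $N=N'$, $M_i=M_i'$ and $a_i^*=b_i^*$ for all $i$, i.e.\ the distribution sequence of $E$ is a permutation of that of $F$.

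\textbf{Main obstacle.}
The real work lies entirely in the middle step: proving that $f$ sends a fattest approximate square of $E$ of level $\ell$ to something that is, up to a union of boundedly many cylinders and up to a correction factor which is a rational of bounded numerator and denominator, a fattest approximate square of $F$ of level $\ell+c(\ell)$ with $c(\ell)$ in a fixed finite set. A priori $f(U)$ is only a clopen set — a union of $F$-cylinders of controlled but non-constant ranks — so every estimate has to be carried through with error terms that are uniformly bounded in $\ell$, and one must verify that these errors become \emph{quantized} (via Lemma~\ref{lem:UU'}) rather than accumulating; this is exactly why the sharp box-count (Theorem~\ref{thm:count}), the arithmetic doubling property (Lemma~\ref{lem:UU'}) and the exact measure-preserving identity (Theorem~\ref{thm:measure}) must be used together, none of them sufficing alone. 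A separate and easy case is $s=1$, where $E$ is a rescaled isometric copy of a self-similar Cantor set, there are no approximate squares in the relevant sense, and the conclusion follows from the classical Lipschitz theory of self-similar sets together with the equality of dimensions.
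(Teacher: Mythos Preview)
Your reduction to the single scalar $r$ and the final number-theoretic step are sound, and the overall architecture mirrors the paper's. The genuine gap is the middle step, and it is not a matter of missing details: as stated, the claim is false.

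You need $\kappa(U)=\mu_F(f(U))/\mu_F(V)$ to lie in a \emph{finite} set, where $V$ is a fattest $F$-approximate square of level $\ell+c$. Boundedness of $\kappa(U)$ follows from Theorem~\ref{thm:equivalent}, but the ``bounded denominator'' part does not follow from Lemma~\ref{lem:UU'}. That lemma controls ratios $\mu_F(U')/\mu_F(U)$ along the \emph{offspring tree of components}; it says nothing about $\mu_F(f(U))/\mu_F(V)$ when $V$ is an unrelated fattest square. Concretely, by Lemma~\ref{lem:Falconer+} one has $f(U\cap E)=\bigcup_j (J_j\cap F)$ with $J_j$ components of $\mathbf F_{k+p_0}$, and each $\mu_F(J_j)$ carries a factor $\prod\bc_j$ for some colour $\bc_j\in\mathcal B^{\ell(k+p_0)-(k+p_0)}$ that is \emph{not controlled} by your choice of $U$. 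When you divide by $\mu_F(V)$ you introduce a denominator $(b_1^*)^{\ell(k+c)-(k+c)}$, which is unbounded in $k$; there is no reason for the colours $\bc_j$ to contain enough copies of $b_1^*$ to cancel this. So $\kappa(U)$ is a bounded sequence of rationals, but not one with bounded denominator, and bounded rationals do not form a finite set. (A secondary issue: a single approximate square need not be clopen in $E$; this is why the paper works with components of $\mathbf E_k$ throughout.)

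The paper circumvents exactly this obstruction by switching from the archimedean absolute value to a $p$-adic one. It chooses $U$ so that its colour is $(a_h^*)^{\ell(k)-k}$ with $v_p(a_h^*)$ \emph{minimal}; then, whatever colours the image pieces $J_j$ carry, the ratio $\prod\ba_j/\prod\ba^*$ always satisfies $|\,\cdot\,|_p\le 1$. The non-archimedean inequality then bounds $|\mu_F(f(U))/\mu_E(U)|_p$ by the maximum over $j$, which is driven to $0$ by the factor $\gamma_k$ (Lemma~\ref{dis_gamma}), contradicting the constancy coming from Theorem~\ref{thm:measure}. Your ``fattest'' choice is the archimedean analogue of the paper's extremal choice, but the analogue of $|H_2|_p\le 1$ fails: the uncontrolled image colours can make $\prod\bc_j/(b_1^*)^{|\bc_j|}$ arbitrarily small, so the error is not quantised. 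If you want to rescue the archimedean route you would need an independent reason why the image components $J_j$ have colours close to the fattest one, and nothing in the toolbox (Theorems~\ref{thm:count}, \ref{thm:equivalent}, \ref{thm:measure}, Lemma~\ref{lem:UU'}) provides that.
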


\begin{remark}{\rm   Based on the results of the present paper,  in a sequential paper, Yang and Zhang \cite{YZ20a} proves the following:  Let  $E, F\in {\cal M}_{t,v}(n,m)$ and assume that both $E$ and $F$ are regular.
Then (i) If   $\log m/\log n\in \Q$,  then $E\sim F$ if and only if $\mu_E$ and $\mu_F$ have the same multifractal spectrum;  (ii)  If   $\log m/\log n\in \Q^c$, then $E\sim F$ if and only if the distribution sequence of $E$ is a permutation of that of $F$.
}
 \end{remark}

 However, for general self-affine carpets, the complete Lipschitz classification is still a tedious task.

\begin{example}\label{exam:two} {\rm Let $m=4$, $n=6$.
Two digit sets ${\cal D}$ and ${\cal D}'$ are shown in Figure \ref{fig:two}.
  Then $E=K(6,4, {\mathcal D})$ and $F=K(6,4, {\mathcal D}')$
  are not Lipschitz equivalent since $\mu_E$ is doubling but $\mu_F$ is not.
}
\end{example}

\begin{figure}[hppt]
  \begin{tikzpicture}[xscale=.6,yscale=.9]
        \pgfmathsetmacro{\h}{10}
        \foreach \i in {\h,0} \draw(\i,0)grid++(6,4);
        \foreach \i in {{0,0},{2,0},{4,0},{1,1},{2,1},{1,2}}
        \draw[fill=red!70](\i)rectangle++(1,1);
        \foreach \i in {{0,0},{2,0},{4,0},{1,1},{1,3},{2,3}}
        \draw[fill=red!70]($(\i)+(\h,0)$) rectangle++(1,1);
  \end{tikzpicture}
 \caption{\label{fig:two}The digit sets ${\cal D}$ and ${\cal D'}$ in Example \ref{exam:two}.}
\end{figure}

\begin{example}\label{exam:one} {\rm Let $m=8$, $n=27$, then $\sigma=\log 2/\log 3.$
Let $\SD$ and $\SD'$ be the digit sets illustrated by Figure \ref{fig:one}.
Then, for digit set ${\cal D}$, we have  $N=9$, $s=2$, $(a^*_1, a^*_2)=(6,3)$ and $M_1=M_2=1$.
For digit set ${\cal D}'$, we have  $N'=6$, $s'=4$, $(b^*_1, b^*_2)=(2,1)$ and $M_1'=M_2'=2$.
One can check $E=K(n,m, {\mathcal D})$ and $F=K(n,m, {\mathcal D}')$ satisfy  \eqref{eq:invariants}, and hence   $\mu_E$ and $\mu_F$ have the same multifractal spectrum. However, by Theorem \ref{thm:irrational},
$E$ and $F$ are not Lipschitz equivalent.
}
\end{example}

\begin{figure}[hppt]
    \begin{tikzpicture}[xscale=.1334,yscale=.45]
        \pgfmathsetmacro{\h}{45}
        \foreach \i in {\h,0} \draw(\i,0)grid++(27,8);
        \foreach \i in {1,4,7}
            \draw[fill=red!70](\i,1)rectangle++(1,1);
        \foreach \i in {1,...,6}
            \draw[fill=red!70](2*\i-1,4)rectangle++(1,1);
        \foreach \i in {0,...,3}
            \draw[fill=red!70](\h+1,2*\i)rectangle++(1,1);
        \foreach \i in {4,6}
            \draw[fill=red!70](\h+3,\i)rectangle++(1,1);
  \end{tikzpicture}
 \caption{The digit sets ${\cal D}$ and ${\cal D'}$ in Example \ref{exam:one}.}
 \label{fig:one}
\end{figure}

\medskip

The paper is organized as follows. In Section \ref{sec:geo}, we investigate the basic rectangles
of self-affine carpets. Theorem \ref{thm:equivalent} and Corollary \ref{cor:spectra} are proved in Section \ref{sec:invariant}. Theorem \ref{thm:invariants} is proved in Section \ref{sec:compare}. In Section \ref{sec:vacant}, we discuss approximate squares.
In Section \ref{sec:arithmetic}, we investigate the arithmetic doubling property of $\mu_E$. Theorem \ref{thm:measure} and
 Theorem \ref{thm:irrational} are proved in Section \ref{sec:preserve} and \ref{sec:irrational}, respectively.

%%%%%%%%%%%%%%%%%%%%%%%%%%%%%%%%%%%%%%%%%%

\section{\textbf{Basic rectangles of   self-affine carpets}}\label{sec:geo}

 Let $E=K(n,m, \SD)$ be a  self-affine carpet.
Throughout the paper, we will use the  notation
$\ell(k)=\lfloor k/\sigma\rfloor$
where $\sigma=\log m/\log n$ and $\lfloor x \rfloor$ denotes the greatest integer no larger than $x$.  Recall
that $(a_j)_{j=0}^{m-1}$ is the distribution sequence of $E$; we denote
\begin{equation}\label{def:SE}
{\mathcal E}=\{j;~ a_j>0\}\ \text{ and } \ \ s=\#{\mathcal E}.
\end{equation}

 Set $\widetilde {\mathbf E}_k=\bigcup_{\bi\in \SD^k} S_{\bi}([0,1]^2)$
and we call it the \emph{$k$-th approximation} of $E$.
Let $\bi=(x_1,y_1)\dots (x_k,y_k)\in \SD^k$, we will denote
   the basic rectangle $S_{\bi}([0,1]^2)$ by $R(x_1\dots x_k, y_1\dots y_k)$.

Let $q\geq 2$ be an integer, and
$x_1\dots x_k\in \{0,1,\dots, q-1\}^k$, we will use the notation
$ {0.x_1\dots x_k}|_q=\sum_{j=1}^k x_j q^{-j}.$
For $\bx=x_1\dots x_k\in \{0,1,\dots, n-1\}^k$ and $\by=y_1\dots y_{\ell(k)}\in \{0,1,\dots, m-1\}^{\ell(k)}$, set
\begin{equation}
{Q}(\bx,\by)=({0.\bx}|_n,  {0.\by}|_m)+
 \left [0,\frac{1}{n^k}\right ]\times \left [0,\frac{1}{m^{\ell(k)}}\right ]
\end{equation}
and we call it an \emph{  approximate square} of rank $k$,
 if $(x_j, y_j)\in \SD$ for $j=1,\dots, k$ and
$y_j\in \SE$ for $j=k+1, \dots, \ell(k)$  (See \cite{M84}).

The following lemma   has been used in literature as an obvious fact.
\begin{lemma}\label{lem:boundary} Let $E=K(n,m, \SD)$ be a self-affine carpet. Then any two basic rectangles of rank $k$ are disjoint in measure $\mu$.
\end{lemma}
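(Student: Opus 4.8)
The plan is to reduce the statement to a packing-type estimate of the Lebesgue measure $\mathcal{L}$ of a single basic rectangle and then apply it inductively. First I would observe that by the Hutchinson set equation $E=\bigcup_{d\in\SD}S_d(E)$ together with the definition \eqref{def_mu} of a Bernoulli measure $\mu=\mu_\bp$, we have $\mu=\sum_{d\in\SD}p_d\,\mu\circ S_d^{-1}$, so any $\mu$-mass carried by a basic rectangle $R=S_{\bi}([0,1]^2)$ of rank $k$ can only come from those rank-$k$ cylinders $E_{\bj}$, $\bj\in\SD^k$, whose basic rectangle $S_{\bj}([0,1]^2)$ actually meets $R$. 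The crux is therefore to show: if $\bi\ne\bj\in\SD^k$, then $\mu\big(S_{\bi}([0,1]^2)\cap S_{\bj}([0,1]^2)\big)=0$, because two distinct rank-$k$ basic rectangles can intersect at most along a common edge segment, and $\mu$ assigns zero mass to such a segment.

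The key steps, in order: (1) Note that the rank-$k$ basic rectangles are exactly the boxes $R(x_1\dots x_k,y_1\dots y_k)$ of size $n^{-k}\times m^{-k}$ sitting on the $n^{-k}$-grid horizontally and the $m^{-k}$-grid vertically; hence two distinct ones have disjoint interiors, and their intersection, if nonempty, is contained in a horizontal line $\{y=c\,m^{-k}\}$ or a vertical line $\{x=c\,n^{-k}\}$. (2) Reduce to showing $\mu(L)=0$ for every axis-parallel line $L$ of the form $\{x=c\,n^{-k}\}$ or $\{y=c\,m^{-k}\}$ (it suffices to treat lines meeting $[0,1]^2$). (3) For a vertical line $L=\{x=c\,n^{-k}\}$: iterating the set equation, $E\subset\widetilde{\mathbf E}_K$ for all $K$, and $\mu$-almost every point of $E$ has a well-defined base-$n$ expansion of its first coordinate determined by the cylinder address; the set of points of $E$ whose first coordinate equals the terminating value $c\,n^{-k}$ is contained in the countable union, over the two possible tails ($\dots000\dots$ and $\dots(n-1)(n-1)\dots$), of cylinders, but more cleanly: the projection $\pi_1:E\to[0,1]$, $\pi_1\big(S_{(x_1,y_1)(x_2,y_2)\cdots}(E)\big)$, pushes $\mu$ forward to a self-similar measure on $[0,1]$ for the IFS $\{z\mapsto (z+x)/n\}$, which is non-atomic whenever $N\ge 2$ (the hypothesis $2\le m<n$ and nonemptiness of $\SD$ guarantee $N\ge 1$; in the degenerate case $N=1$ the carpet is a single point and the statement is vacuous). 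Hence $(\pi_1)_*\mu(\{c\,n^{-k}\})=0$, so $\mu(L)\le(\pi_1)_*\mu(\{c\,n^{-k}\})=0$. The horizontal case is symmetric, using $\pi_2$ and the IFS $\{z\mapsto(z+y)/m\}$ on the second coordinate. (4) Combining (1)–(3): for $\bi\ne\bj$, $\mu\big(S_{\bi}([0,1]^2)\cap S_{\bj}([0,1]^2)\big)$ is dominated by the $\mu$-measure of a finite union of such lines, hence is $0$.

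The main obstacle is step (3): justifying non-atomicity of the coordinate projections of $\mu$. The subtlety is that the projection $\pi_1:E\to[0,1]$ is not injective on cylinder addresses — different digits $(x,y),(x,y')\in\SD$ with the same first entry $x$ collapse together — so one must argue at the level of the pushforward measure rather than of addresses. The clean way is: $(\pi_1)_*\mu$ satisfies $(\pi_1)_*\mu=\sum_{x}\big(\sum_{y:(x,y)\in\SD}p_{(x,y)}\big)(\pi_1)_*\mu\circ\phi_x^{-1}$ where $\phi_x(z)=(z+x)/n$, i.e. it is a self-similar measure on $[0,1]$ with contraction ratio $1/n<1$ and at least one map in play; any such measure with no exact overlaps giving a single fixed point — equivalently, as soon as the supporting set is not a single point — is atomless, because an atom of maximal mass would have to be fixed by some $\phi_x$ and the total atomic mass would force a contradiction with $\sum p_{(x,y)}=1$ unless $N=1$. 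Since $E$ is assumed to be a genuine (nontrivial) carpet, $\mu$ is non-atomic in each coordinate, and the lemma follows. I would streamline the writeup by citing McMullen's observation \cite{M84} that distinct basic rectangles of the same rank overlap only on boundary segments, so that only the line-null- set claim needs proof.
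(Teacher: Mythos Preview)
Your approach is the paper's: two distinct rank-$k$ basic rectangles meet only along an axis-parallel segment, and such segments are $\mu$-null because the coordinate projections $(\pi_1)_*\mu$ and $(\pi_2)_*\mu$ are self-similar, hence non-atomic. There is one slip in what you flag as the main obstacle: you assert $(\pi_1)_*\mu$ is non-atomic whenever $N\ge 2$, but this is false when all digits of $\SD$ lie in a single column---then $(\pi_1)_*\mu$ is a Dirac mass regardless of $N$. The correct hypothesis is that $\SD$ occupies at least two columns (for $\pi_1$) and at least two rows (for $\pi_2$). The paper handles exactly this by first setting aside the single-row and single-column cases as obvious (there the basic rectangles can only share one type of edge, and the \emph{other} projection is non-degenerate), and then invoking continuity of both projection measures in the remaining case. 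With that small case split your argument is complete and coincides with the paper's.
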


% \section{\textbf{Proof of Lemma \ref{lem:boundary}.} }
\begin{proof} If $\SD$ is located in a single column or in a single row, obviously
   the lemma holds. Otherwise,
    let $\pi(x,y)=x$, then the projection measure  $\nu=\mu_E\circ \pi^{-1}$
  is a continuous measure, which implies that any vertical line segment has measure $0$ in $\mu_E$;
  similarly    any horizontal line segment has measure $0$ in $\mu_E$.
 The lemma is proved.
   \end{proof}

Let $\delta>0$. We call $\delta(z+[0,1]^d)$ a \emph{$\delta$-mesh-box} when $z\in \Z^d$.
For a bounded set  $A\subset \mathbb{R}^d$, we define  $N_\delta(A)$ to be the number of $\delta$-mesh-boxes intersecting $A$.
Recall that the box dimension of $E$ is given by  (see \cite{Bed84, M84})
\begin{equation}\label{eq:box}
\dim_B E=\log_n(Ns^{1/\sigma-1}).
\end{equation}

\begin{thm}\label{thm:count}
 Let $R=R(\bx,\by)$ be a basic rectangle of rank $k$, let $p\geq 0$ be an integer,  and let $\delta=1/n^{k+p}$.
 Then $N_\delta(R\cap E)$ is comparable to $\mu_E(R)\delta^{-\dim_B E}$, that is
\begin{equation}\label{eq:C2}
C_2^{-1}\mu_E(R)\delta^{-\dim_B E}\leq  N_\delta(R\cap E)\leq C_2\mu_E(R)\delta^{-\dim_B E},
\end{equation}
where $C_2=2s(m+2)$.
\end{thm}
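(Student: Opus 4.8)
The plan is to count $\delta$-mesh-boxes that meet $R\cap E$ by first passing from the basic rectangle $R$ of rank $k$ to the finer structure inside it, and then comparing with $\mu_E(R)$ directly. Recall $\delta = 1/n^{k+p}$ and that a basic rectangle of rank $k$ has horizontal side $1/n^k$ and vertical side $1/m^{\ell(k)}$ where $\ell(k)=\lfloor k/\sigma\rfloor$. Inside $R$, the set $R\cap E$ is an affine copy of $E$ (up to the anisotropic scaling), so after subdividing $k+p$ more levels in the horizontal direction we reach basic rectangles of rank $k+p$; each such rectangle has horizontal side exactly $\delta$ and vertical side $1/m^{\ell(k+p)}\le \delta$, hence is contained in a bounded number (at most $2$, or a small constant) of columns of $\delta$-mesh-boxes and spans at most $m^{\ell(k+p)}\delta \cdot$(vertical extent) worth of rows. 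So the first step is: $N_\delta(R\cap E)$ is comparable, up to a factor absorbed into $C_2$, to the number of rank-$(k+p)$ basic rectangles inside $R$ that meet $E$, weighted by how many $\delta$-boxes each one can touch vertically. The key bookkeeping fact is that a rank-$(k+p)$ basic rectangle has vertical side $m^{-\ell(k+p)}$, and $m^{-\ell(k+p)}/n^{-(k+p)} = m^{-\ell(k+p)} n^{k+p}$ is between $1$ and $m$ since $\ell(k+p)\le (k+p)/\sigma < \ell(k+p)+1$; so each rank-$(k+p)$ basic rectangle meets between $1$ and $m+2$ mesh-boxes (allowing for boundary effects in both directions, which contributes the additive constants).

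Next I would count the rank-$(k+p)$ basic rectangles inside $R$ meeting $E$. By self-affinity this equals $N_{n^{-(k+p)}}$ applied to a copy of $E$ scaled to fit $R$; equivalently it is $\mu_E(R)\cdot N$ raised to appropriate powers — more precisely, the number of rank-$j$ cylinders of $E$ is $N^j$ and each carries uniform mass $N^{-j}$, but the number that are geometrically distinct as basic rectangles is smaller because of the vacant-row collapsing. The clean way is to invoke the box-dimension formula \eqref{eq:box}: the number of rank-$(k+p)$ basic rectangles meeting $E$ is comparable to $(n^{k+p})^{\dim_B E} = (n^{k+p}) ^{\log_n(Ns^{1/\sigma-1})}$, and the number of those lying inside the fixed rank-$k$ rectangle $R$ is this quantity times the "conditional" proportion, which is exactly $\mu_E(R)$ up to the same constants. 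Concretely: the total count is $N_{\delta}(E)$, comparable to $\delta^{-\dim_B E}$; the proportion of rank-$(k+p)$ basic rectangles sitting in $R$ is controlled above and below by $\mu_E(R)$ because $\mu_E$ is the uniform Bernoulli measure and $R$ is a rank-$k$ cylinder region, using Lemma \ref{lem:boundary} to ensure no double counting on shared boundaries. Combining the two steps gives $C_2^{-1}\mu_E(R)\delta^{-\dim_B E}\le N_\delta(R\cap E)\le C_2 \mu_E(R)\delta^{-\dim_B E}$.

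The main obstacle, and where the constant $C_2 = 2s(m+2)$ comes from, is the mismatch between the horizontal refinement (which is exact at level $k+p$, giving side $\delta$) and the vertical refinement (which can only be done in steps of $\log_n m$, so at level $k+p$ the vertical side is $m^{-\ell(k+p)}$, not $\delta$). This forces a careful two-sided estimate: a single vertical $\delta$-column over $R$ may meet up to $s$ distinct vertical slices of rank-$(k+p)$ rectangles after we further subdivide vertically (the factor $s$ counts non-vacant rows at the extra vertical level), and each rank-$(k+p)$ rectangle touches at most $m+2$ mesh-boxes once boundary overlaps in both coordinates are accounted for; the factor $2$ covers the one-extra-column phenomenon in the horizontal direction. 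I would set up this counting by writing the number of sub-rectangles of $R$ of rank $\ell(k+p)$ in the vertical coordinate, comparing it to $s^{\ell(k+p)-(k+p)}$ times the level-$(k+p)$ count, and checking the arithmetic gives precisely the claimed $C_2$. The lower bound is the easier direction (every basic rectangle of rank $k+p$ inside $R$ contributes at least one mesh-box, and distinct ones that are horizontally separated contribute distinct boxes); the upper bound requires the overcounting control just described.
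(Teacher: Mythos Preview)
Your overall strategy---pass to rank-$(k+p)$ pieces inside $R$, each of which has horizontal side exactly $\delta$ and vertical side comparable to $\delta$, and then count mesh boxes---is the same as the paper's. But there is a persistent terminological error that makes several of your statements literally false and muddles the counting.

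A \emph{basic rectangle} of rank $j$ is $S_{\bi}([0,1]^2)$ for $\bi\in\SD^j$; its dimensions are $n^{-j}\times m^{-j}$, not $n^{-j}\times m^{-\ell(j)}$. What has dimensions $n^{-j}\times m^{-\ell(j)}$ is an \emph{approximate square} of rank $j$. So your opening claim that ``a basic rectangle of rank $k$ has \dots\ vertical side $1/m^{\ell(k)}$'' is wrong, as is ``$1/m^{\ell(k+p)}\le\delta$'' (in fact $\delta\le m^{-\ell(k+p)}\le m\delta$, as you yourself compute a few lines later). If you really use basic rectangles of rank $k+p$, each one is a tall thin strip of height $m^{-(k+p)}\gg\delta$ that meets roughly $n^{k+p}/m^{k+p}$ mesh boxes, and there are only $N^p$ of them in $R$; your lower-bound sentence ``every basic rectangle of rank $k+p$ inside $R$ contributes at least one mesh-box'' then gives only $N_\delta(R\cap E)\ge N^p$, which is far too weak.

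The paper's proof simply replaces your ``basic rectangles'' by approximate squares throughout. Inside $R$ there are $N^p$ basic rectangles of rank $k+p$, and each of these contains $s^{\ell(k+p)-(k+p)}$ approximate squares of rank $k+p$; every such approximate square has size $\delta\times(\text{between }\delta\text{ and }m\delta)$ and hence meets between $1$ and $2(m+2)$ mesh boxes. This gives $N_\delta(R\cap E)\asymp N^{p}s^{\ell(k+p)-(k+p)}$ with constant $C'=2(m+2)$. Since $\mu_E(R)=N^{-k}$ and $\delta^{-\dim_B E}=N^{k+p}s^{(1/\sigma-1)(k+p)}$, one has $\mu_E(R)\delta^{-\dim_B E}=N^{p}s^{(1/\sigma-1)(k+p)}$, which differs from the approximate-square count by at most a factor $s$ (because $\ell(k+p)\le(k+p)/\sigma<\ell(k+p)+1$). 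That is exactly where $C_2=sC'=2s(m+2)$ comes from. Once you correct ``basic rectangle'' to ``approximate square'' and drop the detour through $N_\delta(E)$ and conditional proportions, your argument becomes this one.
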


\begin{proof}  Notice that $R$ contains $N^p$ number of basic rectangles of rank $k+p$. For every   basic rectangle $J$ of rank $k+p$,
there are $s^{\ell(k+p)-(k+p)}$ number
of approximate squares of rank $k+p$ contained in $J$, see \cite{M84}.
Clearly $N_\delta(R\cap E)$ is comparable to $s^{\ell(k+p)-(k+p)}$; precisely,  setting $C'=2(m+2)$, we have
$$
(C')^{-1} N^{p}s^{\ell(k+p)-(k+p)}\leq N_\delta(R\cap E)\leq C' N^ps^{\ell(k+p)-(k+p)}.
$$
Since $\dim_B E=\log_n(Ns^{1/\sigma-1})$,  we have $\delta^{-\dim_B E}=N^{k+p} s^{(1/\sigma-1)(k+p)}.$
Notice that $\ell(k+p)\leq (k+p)/\sigma<\ell(k+p)+1$, we obtain \eqref{eq:C2} by setting $C_2=sC'$.
\end{proof}

\subsection{Connected components of $\widetilde {\mathbf E}_k$}
 Miao \textit{et al.} \cite{Miao2017} proved the following result.

\begin{thm}\label{thm:finite-type} (\cite{Miao2017})
  If  $E\in {\mathcal M}_t(n,m)$, then there is an integer $M_0$ such that for all $k\geq 1$,
  each connected component of  $\widetilde {\mathbf E}_k$ consists of at most $M_0$ basic rectangles of rank $k$.
\end{thm}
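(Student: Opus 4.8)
The final statement to prove is Theorem~\ref{thm:finite-type}, the finite-type structure result of Miao, Xi and Xiong: for a totally disconnected Bedford--McMullen carpet $E\in\mathcal M_t(n,m)$, there is a uniform bound $M_0$ on the number of rank-$k$ basic rectangles making up any connected component of $\widetilde{\mathbf E}_k$. Although the paper cites this from \cite{Miao2017}, here is how I would prove it.

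\textbf{Setup and strategy.} The plan is to argue by a ``bad configuration propagates'' mechanism. First I would observe that since $E$ is totally disconnected, $E$ itself is not connected; in particular there exist a horizontal or vertical gap. Concretely, total disconnectedness of a self-affine carpet forces (after analysis) that the digit set $\SD$, viewed appropriately, has a missing row \emph{or} a missing column in the relevant sense, or at least that the first approximation $\widetilde{\mathbf E}_1$ is disconnected. The key quantitative input is: because $\widetilde{\mathbf E}_1=\bigcup_{d\in\SD}S_d([0,1]^2)$ is a disconnected closed set, there is a definite ``separation pattern'' at every scale, and the anisotropy (the vertical direction contracts by $m^{-k}$ while the horizontal by $n^{-k}$, with $m<n$) means that after $\ell(k)$ vertical subdivisions we return to a scale where the horizontal and vertical mesh sizes are comparable --- this is exactly the role of the approximate squares $Q(\bx,\by)$ and the integer $\ell(k)=\lfloor k/\sigma\rfloor$.

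\textbf{Key steps.} (1) Reduce to approximate squares: show that a connected component of $\widetilde{\mathbf E}_k$ consisting of many basic rectangles would have to contain a ``chain'' of adjacent approximate squares of rank $k$ that is long in one of the coordinate directions, using that each basic rectangle of rank $k$ contains $s^{\ell(k)-k}$ approximate squares stacked vertically, and adjacency of rectangles translates to adjacency of approximate squares. (2) Analyze horizontal chains: if an approximate square $Q$ has a horizontal neighbor inside $\widetilde{\mathbf E}_k$, descend into the IFS structure --- horizontal adjacency at rank $k$ means that at some rank $j\le k$ two basic rectangles are horizontally adjacent, which (by self-affinity, reading off the digits) forces $\SD$ to contain two digits in horizontally adjacent columns in the same row, repeatedly; iterate to produce a horizontal segment in $E$, contradicting total disconnectedness. (3) Analyze vertical chains similarly, but here one must use the $\ell(k)$ vs $k$ discrepancy: a long vertical chain of approximate squares forces the distribution sequence $(a_j)$ to have a long block of consecutive nonzero entries in a way that, after passing to the limit, yields a vertical segment in $E$ --- again a contradiction. (4) Combine: the length of any monochromatic chain in either direction is bounded by a constant depending only on $(n,m,\SD)$ (essentially governed by the finite automaton / neighbor-type graph of the IFS), and a connected component that is short in both directions contains boundedly many basic rectangles; set $M_0$ to be this bound.

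\textbf{Main obstacle.} The technical heart is step~(3), controlling vertical chains, because the vertical and horizontal scales in a Bedford--McMullen carpet are genuinely different: a connected component of $\widetilde{\mathbf E}_k$ can be long and thin, spanning many \emph{vertical} mesh boxes of size $m^{-\ell(k)}$ while occupying few horizontal ones, so one cannot naively say ``bounded in one direction implies bounded area.'' The right framework is a finite neighbor-type (Graph-Directed / finite-type) analysis: one defines, for each pair of adjacent basic rectangles of the same rank, a ``relative position type,'' shows there are finitely many types, and shows that total disconnectedness rules out any type that can be ``continued indefinitely'' --- equivalently, the neighbor graph has no cycle along which one can grow an arbitrarily long connected chain. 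Making the finiteness of types precise (there are at most some $C(n,m)$ of them, since two rank-$k$ rectangles that touch must be horizontally within distance $1$ in the $n$-ary expansion and vertically within distance $1$ in the $m$-ary expansion of their coordinates) and then checking the no-infinite-chain property against the hypothesis $E\in\mathcal M_t(n,m)$ is where essentially all the work lies; once that is in hand, the bound $M_0$ and the statement follow immediately.
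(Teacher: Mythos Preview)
The paper does not prove Theorem~\ref{thm:finite-type}; it is quoted verbatim as a result of Miao, Xi and Xiong \cite{Miao2017} and used as a black box. So there is no proof in this paper against which to compare your proposal.

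On the merits of your outline: the framework you land on in step~(4) --- a finite neighbor-type graph for pairs of adjacent rank-$k$ rectangles, with total disconnectedness ruling out cycles that allow unbounded chains --- is the right mechanism, and is essentially how \cite{Miao2017} proceeds. But your steps~(2) and~(3) as written do not stand on their own. In step~(2) you assert that a long horizontal chain of adjacent rank-$k$ basic rectangles ``iterates to produce a horizontal segment in $E$''; however, a chain at a \emph{single fixed} rank $k$ carries no information about deeper ranks, so no segment in $E$ follows without an additional argument (pigeonhole on neighbor types over varying $k$, or a compactness/limit argument). Step~(3) has the same gap, compounded by the anisotropy you yourself flag. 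What actually needs to happen is that steps~(2)--(3) get absorbed into step~(4): one shows the neighbor graph is finite, and that an unbounded sequence of component sizes forces a recurrent neighbor type, hence a cycle in the graph, hence a nontrivial connected subset of $E$. Your proposal names the right machinery but does not carry out the crucial implication ``unbounded components $\Rightarrow$ cycle in the neighbor graph $\Rightarrow$ $E$ not totally disconnected,'' which is where the content lies.
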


 The following lemma is an analogue of    Falconer and Marsh \cite[Lemma 3.2]{FaMa92}.
For a self-affine carpet   $F$,   we use  $\widetilde {\mathbf F}_k$
 to denote its $k$-th approximation.

\begin{lemma}\label{lem:Falconer}
 Let $E, F\in {\mathcal M}_t(n,m)$.
Let $f:E\to F$ be a bi-Lipschitz map.
 Let $k\geq 1$ and let $U$ be a connected component  of $\widetilde {\mathbf E}_k$.
Then there exist an integer $p=p(k)$
and  a set of  connected components of  $\widetilde {\mathbf F}_{k+p}$, which we denote by $J_j$, $1\leq j\leq q$,
such that
\begin{equation}\label{eq:Falconer}
f(U \cap E)=\bigcup_{j=1}^q (J_j\cap F).
\end{equation}
\end{lemma}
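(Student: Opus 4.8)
The plan is to exploit the bi-Lipschitz map $f$ together with the finite-type structure from Theorem~\ref{thm:finite-type}. First I would choose $p=p(k)$ large enough that the components of $\widetilde{\mathbf F}_{k+p}$ are smaller (in diameter) than the $f$-image of the smallest possible gap between distinct components of $\widetilde{\mathbf E}_k$: since $E\in\M_t(n,m)$, distinct components $U,U'$ of $\widetilde{\mathbf E}_k$ satisfy $\operatorname{dist}(U\cap E,U'\cap E)>0$, and because there are only finitely many components at level $k$, there is a uniform lower bound $\rho_k$ for this distance. Using the bi-Lipschitz constant $C$, any two points of $E$ whose images lie in the same component of $\widetilde{\mathbf F}_{k+p}$ must be within $C\cdot(\text{diam of that component})\le C\sqrt{2}\,n^{-(k+p)}$ of each other; choosing $p$ so that $C\sqrt{2}\,n^{-(k+p)}<\rho_k$ forces those preimages to lie in a single component $U$ of $\widetilde{\mathbf E}_k$. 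Hence for each component $J$ of $\widetilde{\mathbf F}_{k+p}$, the set $f^{-1}(J\cap F)$ is contained in exactly one component $U$ of $\widetilde{\mathbf E}_k$.

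Next I would run the symmetric argument with $f^{-1}$ in place of $f$. Here one must be a little careful: a component of $\widetilde{\mathbf F}_{k+p}$ need not be small relative to the separation of components of $\widetilde{\mathbf E}_k$ unless $p$ is chosen appropriately, so I would instead argue that for the fixed component $U$ of $\widetilde{\mathbf E}_k$ in the statement, $f(U\cap E)$ is a compact subset of $F$ that is separated from $f((\widetilde{\mathbf E}_k\setminus U)\cap E)$ by at least $C^{-1}\rho_k$. Now choose $p$ additionally large enough that every component of $\widetilde{\mathbf F}_{k+p}$ has diameter less than $C^{-1}\rho_k$; then any component $J$ of $\widetilde{\mathbf F}_{k+p}$ that meets $f(U\cap E)$ cannot also meet $f((\widetilde{\mathbf E}_k\setminus U)\cap E)$, so $J\cap F\subset f(U\cap E)$. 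Conversely every point of $f(U\cap E)\subset F$ lies in some component of $\widetilde{\mathbf F}_{k+p}$ (because $F\subset\widetilde{\mathbf F}_{k+p}$ and components of an approximation are the connected components of a closed set covering $F$). Collecting the finitely many components $J_1,\dots,J_q$ of $\widetilde{\mathbf F}_{k+p}$ that meet $f(U\cap E)$, we get $f(U\cap E)\subset\bigcup_{j=1}^q(J_j\cap F)$ from the covering property and $\bigcup_{j=1}^q(J_j\cap F)\subset f(U\cap E)$ from the separation property, which together give~\eqref{eq:Falconer}.

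The main technical point, and the step I expect to require the most care, is the uniform choice of $p=p(k)$: I need one value of $p$ that simultaneously makes the $F$-side components fine enough to (a) have their $f^{-1}$-preimages confined to a single $E$-component, and (b) have diameter below $C^{-1}\rho_k$ so they cannot straddle $f(U\cap E)$ and its complement. Both are achieved by taking $n^{-(k+p)}$ small compared to $\min\{C^{-1}\rho_k,\ C^{-2}\rho_k\}$, and the existence of the strictly positive separation bound $\rho_k$ is exactly where the totally-disconnected hypothesis $E\in\M_t(n,m)$ (via Theorem~\ref{thm:finite-type}, which guarantees each component is a finite union of basic rectangles and hence that there are finitely many components at level $k$) is used. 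A minor additional point is to verify that $f(U\cap E)$ is genuinely a union of whole sets $J_j\cap F$ rather than a proper subset of one: this is precisely the containment $\bigcup_j(J_j\cap F)\subset f(U\cap E)$ established from separation, and one should note $f$ is a bijection $E\to F$ so that taking images and preimages behaves well.
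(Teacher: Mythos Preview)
Your approach is essentially the paper's: choose $p$ so that every component $J$ of $\widetilde{\mathbf F}_{k+p}$ has diameter small enough that $f^{-1}(J\cap F)$ cannot meet two distinct components of $\widetilde{\mathbf E}_k$; this is exactly the dichotomy the paper establishes (there by a short contradiction argument). Two quantitative slips to fix: the diameter of a component of $\widetilde{\mathbf F}_{k+p}$ is of order $M_0/m^{k+p}$, not $\sqrt{2}\,n^{-(k+p)}$ (basic rectangles have larger side $m^{-(k+p)}$ since $m<n$, and a component may contain up to $M_0$ of them---this bound on the $F$-side is where Theorem~\ref{thm:finite-type} is actually needed, not for the positivity of $\rho_k$, which already follows from the finiteness of the level-$k$ approximation); and your second paragraph is redundant, since once each $f^{-1}(J\cap F)$ lies in a single $E$-component both inclusions in~\eqref{eq:Falconer} follow immediately.
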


\begin{proof}
Let $M_0$ be a constant such that Theorem \ref{thm:finite-type} holds for $E$ and $F$ simultaneously.
Let $C_0$ be a  Lipschitz constant of $f$.
Let $p=p(k)$ be an integer satisfying
$\displaystyle \frac{2M_0 }{m^{k+p}}< \frac{1}{C_0n^k}.$
Let $J$ be a  connected component of $\widetilde {\mathbf F}_{k+p}$, we claim that
  $J\cap F$ is either contained in $f(U\cap E)$, or it is disjoint from  $f(U\cap E)$.

Suppose on the contrary that   there exists a connected component
$J$ of   $\widetilde {\mathbf F}_{k+p}$ such that on one hand,
there exists $x\in U\cap E$ with $f(x)\in J$,
 and on the other hand, there exists $y\in J\cap F$  such that $f^{-1}(y)\notin U\cap E$.
The fact that   $x$ and $f^{-1}(y)$  belong to different components of $\widetilde {\mathbf E}_k$ implies that
$|x-f^{-1}(y)|\geq \frac{1}{n^k}$, so
\begin{equation}\label{equ_61}
|f(x)-y|\geq \frac{1}{C_0n^k}.
\end{equation}
On the other hand, since $f(x),y\in J$, we have
\begin{equation}\label{equ_71}
|f(x)-y|\leq \text{diam}(J)\leq \frac{2M_0 }{m^{k+p}}.
\end{equation}
Relations \eqref{equ_61} and \eqref{equ_71} imply that
$
\displaystyle \frac{2M_0 }{m^{k+p}}\geq \frac{1}{C_0n^k},
$
which contradicts the choice  of $p$.
 The claim is proved, and the lemma follows.
\end{proof}

\section{\textbf{Invariance  of multifractal spectrum and doubling property}}\label{sec:invariant}

In this section we prove Theorem \ref{thm:equivalent} and Corollary \ref{cor:spectra}.
The following lemma is obvious.

\begin{lemma}\label{lem:box-counting} Let $X$ and $Y$ be two bounded sets in $\R^d$, and let
  $f:~X\to Y$ be a bi-Lipschitz map with  Lipschitz constant  $c$. Let $C_1=(2c\sqrt{d}+2)^d$.
 Then  for any $\delta>0$ we have
$$
C_1^{-1}N_\delta(X) \leq N_\delta(Y)\leq C_1 N_\delta(X).
$$
\end{lemma}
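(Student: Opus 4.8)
The plan is to reduce to a single inequality and then invoke an elementary combinatorial bound on how many $\delta$-mesh-boxes a set of small diameter can meet. Since $f$ is bi-Lipschitz with constant $c$, meaning $c^{-1}|x-y|\le|f(x)-f(y)|\le c|x-y|$, the inverse map $f^{-1}:Y\to X$ is also bi-Lipschitz with the same constant $c$. Hence it suffices to prove the one-sided estimate $N_\delta(Y)\le C_1 N_\delta(X)$; applying it to $f^{-1}$ gives $N_\delta(X)\le C_1 N_\delta(Y)$, and the two together yield $C_1^{-1}N_\delta(X)\le N_\delta(Y)\le C_1 N_\delta(X)$.

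To prove $N_\delta(Y)\le C_1 N_\delta(X)$, I would enumerate the $\delta$-mesh-boxes $B_1,\dots,B_N$ that meet $X$, where $N=N_\delta(X)$. Then $X\subseteq\bigcup_{i=1}^N B_i$, so $Y=f(X)\subseteq\bigcup_{i=1}^N f(X\cap B_i)$, and consequently every $\delta$-mesh-box meeting $Y$ must meet at least one of the sets $f(X\cap B_i)$. Therefore
\[
N_\delta(Y)\le\sum_{i=1}^N N_\delta\bigl(f(X\cap B_i)\bigr),
\]
and it remains to show $N_\delta\bigl(f(X\cap B_i)\bigr)\le C_1$ for each $i$.

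For this last point, observe that $\text{diam}(B_i)=\sqrt d\,\delta$, so $\text{diam}(X\cap B_i)\le\sqrt d\,\delta$ and hence $\text{diam}\bigl(f(X\cap B_i)\bigr)\le c\sqrt d\,\delta$. Choosing any point $p\in f(X\cap B_i)$, the set lies in the closed ball of radius $c\sqrt d\,\delta$ about $p$, hence in the axis-parallel cube $p+[-c\sqrt d\,\delta,\ c\sqrt d\,\delta]^d$ of side $2c\sqrt d\,\delta$. In each coordinate direction an interval of length $2c\sqrt d\,\delta$ meets at most $\lfloor 2c\sqrt d\rfloor+2\le 2c\sqrt d+2$ of the $\delta$-spaced coordinate intervals, and a $\delta$-mesh-box is a product of such coordinate intervals; so the cube meets at most $(2c\sqrt d+2)^d=C_1$ mesh-boxes. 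This gives $N_\delta\bigl(f(X\cap B_i)\bigr)\le C_1$ and completes the argument.

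This is a routine metric bookkeeping fact and I do not anticipate any genuine obstacle. The only place that calls for a little care is the combinatorial count of how many $\delta$-mesh-boxes a set of diameter at most $D$ can meet, together with the verification that the stated constant $C_1=(2c\sqrt d+2)^d$ is large enough; the generous factor of $2$ in front of $c\sqrt d$ leaves ample room, since even the crude bound ``diameter $D$ implies containment in a cube of side $2D$'' already suffices.
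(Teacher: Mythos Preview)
Your argument is correct and complete; the paper itself gives no proof at all, simply declaring the lemma ``obvious''. Your write-up is the standard verification the authors had in mind, so there is nothing to compare.
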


\begin{thm}\label{main_th1} Let $E, F\in {\mathcal M}_t(n,m)$, and
let $f:~E\rightarrow F$ be a  bi-Lipschitz map. Then there exists $\zeta>0$,
such that for any $k\geq 1 $ and any  connected component  $U$ of $\widetilde {\mathbf E}_k$, it holds that
\begin{equation}\label{main_re_1}
  \mu_F(f(U\cap E)) \leq \zeta\mu_E(U\cap E).
\end{equation}
\end{thm}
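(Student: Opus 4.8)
The plan is to combine the geometric input from Lemma \ref{lem:Falconer} with the box-counting estimate of Theorem \ref{thm:count}, translating ``number of $\delta$-mesh boxes'' into ``$\mu_E$-measure'' on both sides. First, fix a bi-Lipschitz $f:E\to F$ with Lipschitz constant $c$, and let $U$ be a connected component of $\widetilde{\mathbf E}_k$. By Theorem \ref{thm:finite-type}, $U$ consists of at most $M_0$ basic rectangles of rank $k$, so $\mu_E(U\cap E)$ is comparable (up to the factor $M_0$) to $\mu_E(R)$ for any one of its constituent rank-$k$ basic rectangles $R$, and in particular $\mu_E(U\cap E)\asymp N^{-k}\cdot(\text{number of rank-}k\text{ rectangles in }U)$ lies between $N^{-k}$ and $M_0N^{-k}$. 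By Lemma \ref{lem:Falconer} there is $p=p(k)$ and connected components $J_1,\dots,J_q$ of $\widetilde{\mathbf F}_{k+p}$ with $f(U\cap E)=\bigcup_{j=1}^q(J_j\cap F)$; again each $J_j$ consists of at most $M_0$ basic rectangles of rank $k+p$, so $\mu_F(f(U\cap E))$ is comparable to $q\cdot N^{-(k+p)}$ up to the factor $M_0$ (using Lemma \ref{lem:boundary} to add measures of distinct rectangles).

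The next step is to count mesh boxes at scale $\delta=1/n^{k+p}$. On the $E$-side, $U\cap E$ is covered by at most $M_0$ rank-$k$ basic rectangles, and Theorem \ref{thm:count} (applied with this $p$) bounds $N_\delta(R\cap E)$ above by $C_2\,\mu_E(R)\,\delta^{-\dim_B E}$ for each such $R$; summing, $N_\delta(U\cap E)\le M_0C_2\,\mu_E(U\cap E)\,\delta^{-\dim_B E}$. On the $F$-side, $f(U\cap E)=\bigcup_j(J_j\cap F)$ and each $J_j$ contains a rank-$(k+p)$ basic rectangle $R_j$, so Theorem \ref{thm:count} gives $N_\delta(J_j\cap F)\ge N_\delta(R_j\cap F)\ge C_2^{-1}\mu_F(R_j)\,\delta^{-\dim_B E}$; since the $J_j$ are disjoint connected components, $N_\delta(f(U\cap E))\ge C_2^{-1}\big(\sum_j\mu_F(R_j)\big)\delta^{-\dim_B E}$, and $\sum_j\mu_F(R_j)\ge M_0^{-1}\mu_F(f(U\cap E))$. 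Finally, Lemma \ref{lem:box-counting} applied to the bi-Lipschitz map $f$ restricted to $U\cap E$ gives $N_\delta(f(U\cap E))\le C_1\,N_\delta(U\cap E)$. Chaining these inequalities yields
$$
M_0^{-1}C_2^{-1}\mu_F(f(U\cap E))\,\delta^{-\dim_B E}\le C_1 M_0 C_2\,\mu_E(U\cap E)\,\delta^{-\dim_B E},
$$
so \eqref{main_re_1} holds with $\zeta=C_1 M_0^2 C_2^2$, a constant independent of $k$ and $U$.

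The point that needs the most care is ensuring that the comparison constants genuinely do not depend on $k$ (equivalently, on $p=p(k)$, which grows with $k$). This is exactly where Theorem \ref{thm:count} is designed to help: its constant $C_2=2s(m+2)$ is uniform in both the rank of $R$ and the exponent $p$, so even though $p(k)\to\infty$ the box-count/measure comparison never degrades. The other subtlety is that Lemma \ref{lem:Falconer} produces an \emph{exact} equality $f(U\cap E)=\bigcup_j(J_j\cap F)$ rather than an approximate containment, which is what lets us pass cleanly between the $F$-side mesh count and $\mu_F(f(U\cap E))$ without accumulating boundary error; Lemma \ref{lem:boundary} is what guarantees the measures of the finitely many basic rectangles inside the $J_j$ add up correctly. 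No lower bound for $\mu_F(f(U\cap E))$ is needed here — only the stated upper bound — so the argument is one-directional, and the reverse inequality (hence the two-sided statement of Theorem \ref{thm:equivalent}) will presumably follow by applying \eqref{main_re_1} to $f^{-1}$.
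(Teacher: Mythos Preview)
Your approach is essentially identical to the paper's: apply Lemma~\ref{lem:Falconer}, set $\delta=n^{-(k+p)}$, bound $N_\delta$ on each side via Theorem~\ref{thm:count}, and link the two sides through Lemma~\ref{lem:box-counting}. The one step to tighten is the lower bound on the $F$-side: disjointness of the components $J_j$ does not by itself guarantee that the $\delta$-mesh boxes meeting the various $R_j\cap F$ are distinct, since a mesh box can touch several rank-$(k+p)$ basic rectangles along grid lines; the paper handles this with the observation that a $\delta$-mesh box meets at most four such rectangles, contributing a harmless factor $1/4$. (Your extra $M_0$ factors are also unnecessary---summing Theorem~\ref{thm:count} over the rectangles in $U$ and invoking Lemma~\ref{lem:boundary} already gives $N_\delta(U\cap E)\le C_2\,\mu_E(U\cap E)\,\delta^{-\dim_B E}$ directly.)
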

\begin{proof}  First, by Lemma \ref{lem:Falconer}, we have
$\displaystyle f(U\cap E)=\bigcup_{j=1}^q (J_j\cap F),$
where $J_j$'s are  connected components of $\widetilde {\mathbf F}_{k+p}$.
Set $\delta= n^{-(k+p)}$ and let $\beta$ be   the box dimension of $E$ (also $F$).
By
Lemma  \ref{lem:box-counting}, there is a constant $C_1>0$ such that
\begin{equation}\label{eq:est_6}
N_\delta(U\cap E)\geq C_1^{-1} N_\delta\left (\bigcup_{j=1}^q (J_j\cap F)\right ).
\end{equation}
Let $C_2$ be a constant such that  Theorem \ref{thm:count} holds for $E$ and $F$ simultaneously.
By  Theorem \ref{thm:count} and Lemma   \ref{lem:boundary},  we have
\begin{equation}\label{est_4}
 N_\delta(U\cap E) \leq C_2\mu_E(U)\delta^{-\beta}
\end{equation}
and, since a $\delta$-mesh box can intersect at most four basic rectangles of rank $k+p$, we have
\begin{equation}\label{est_5}
N_\delta\left (\bigcup_{j=1}^q (J_j\cap F)\right ) \geq \frac{1}{4}\sum_{j=1}^qN_\delta(J_j\cap F)\geq (4C_2)^{-1} \delta^{-\beta} \mu_F\left (\bigcup_{j=1}^q (J_j\cap F)\right ).
\end{equation}
Combining  \eqref{eq:est_6},  \eqref{est_4} and \eqref{est_5}, we obtain
$$
\mu_F \left (\bigcup_{j=1}^q (J_j\cap F)\right )  \leq 4C_1  C_2^2 \mu_E(U\cap E).
$$
The theorem is proved.
\end{proof}

\begin{proof}[\textbf{Proof of Theorem \ref{thm:equivalent}.}]
Since the Borel $\sigma$-algebra of $E$ can be generated by
$$
{\mathcal B}_0=\bigcup_{k=1}^\infty \{U\cap E;~ \text{$U$ is a connected component of $\widetilde {\mathbf E}_k$}\},
$$
 it follows that $\mu_F(f(A))\leq \zeta\mu_E(A)$ holds for all Borel set $A\subset E$.
 Changing the role of $E$ and $F$, we obtain the other side inequality.
\end{proof}

Let $\mu$ be a Borel measure on a metric space  $E$.
Let $B(x,r)$ be the ball with center $x$ and radius $r$.
For any $x\in E$, the upper and lower local dimension of $\mu$ at
$x$ are defined by
\begin{equation}
\overline{d}_\mu(x)=\underset{r\rightarrow 0}{\limsup} \frac{\log \mu(B(x,r)\cap E)}{\log r}
\ \text{ and } \
\underline{d}_\mu(x)=\underset{r\rightarrow 0}{\liminf} \frac{\log \mu(B(x,r)\cap E)}{\log r}
\end{equation}
respectively. If $\overline{d}_\mu(x)=\underline{d}_\mu(x)$, then we denote the common value
by $d_\mu(x)$, and call it the \emph{local dimension} of $\mu$ at $x$.

For $\alpha\in \R$, the level sets $X_{\alpha,E}$ are defined by
\begin{equation}
X_{\alpha,E}=\{x\in E:~~d_\mu(x)=\alpha\}.
\end{equation}
The function
$h_\mu(\alpha)=\text{dim}_H X_{\alpha,E}$
is called  the  \emph{multifractal spectrum} of $\mu$.

The following lemma is obvious, and we leave its proof to the reader.

\begin{lemma}\label{thm:equivalence} Let $E$ and $F$ be two metric spaces and let $f: E\to F$ be a bi-Lipschitz map.
Let $\mu,\nu$ be two probability measures on $E$ and $F$ respectively. If $\mu$ and $\nu\circ f$ are equivalent, then

(i) $\mu$ and $\nu$ have the same multifractal spectrum;

(ii) $\mu$ is doubling if and only if $\nu$ is doubling.
\end{lemma}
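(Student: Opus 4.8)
The plan is to exploit the single elementary fact that a bi‑Lipschitz bijection distorts balls only by a bounded factor, and to combine it with the measure equivalence so that all multiplicative constants become harmless: in the multifractal computation they disappear after dividing a logarithm by $\log r$ and letting $r\to 0$, and in the doubling computation they only affect the doubling constant. Fix a constant $c\ge 1$ with $c^{-1}d_E(x,y)\le d_F(f(x),f(y))\le c\,d_E(x,y)$ for all $x,y$, and fix $\zeta>0$ with $\zeta^{-1}\mu(A)\le \nu(f(A))\le\zeta\mu(A)$ for every Borel $A\subseteq E$. Since $f$ is a bijection, for every $x\in E$, $y:=f(x)$ and $r>0$ one has the inclusions
$$B(y,c^{-1}r)\cap F\ \subseteq\ f\big(B(x,r)\cap E\big)\ \subseteq\ B(y,cr)\cap F.$$

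For part (i), combining these inclusions with the equivalence of $\mu$ and $\nu\circ f$ gives, after relabelling the radius,
$$\nu\big(B(y,c^{-1}r)\cap F\big)\ \le\ \zeta\,\mu\big(B(x,r)\cap E\big)\qquad\text{and}\qquad \zeta^{-1}\mu\big(B(x,r)\cap E\big)\ \le\ \nu\big(B(y,cr)\cap F\big).$$
Taking logarithms, dividing by $\log r<0$ for small $r$, and using $\log(c^{\pm1})/\log r\to 1$ and $\log\zeta/\log r\to 0$, one concludes $\overline d_\nu(y)=\overline d_\mu(x)$ and $\underline d_\nu(y)=\underline d_\mu(x)$. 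Hence $d_\mu(x)$ exists and equals $\alpha$ if and only if $d_\nu(f(x))$ exists and equals $\alpha$; that is, $f(X_{\alpha,E})=X_{\alpha,F}$. Because $f$ is bi‑Lipschitz it preserves Hausdorff dimension, so $h_\mu(\alpha)=\dim_H X_{\alpha,E}=\dim_H X_{\alpha,F}=h_\nu(\alpha)$ for every $\alpha$.

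For part (ii), suppose $\mu$ is doubling with constant $C$, and let $k=\lceil \log_2(2c^2)\rceil$. Given $y\in F$ and $\rho>0$, write $y=f(x)$; using the inclusions above, the equivalence, monotonicity, and $k$ applications of the doubling inequality for $\mu$ to pass from radius $2c\rho$ down to radius $c^{-1}\rho$, we get
$$\nu\big(B(y,2\rho)\cap F\big)\ \le\ \zeta\,\mu\big(B(x,2c\rho)\cap E\big)\ \le\ \zeta\,C^{k}\,\mu\big(B(x,c^{-1}\rho)\cap E\big)\ \le\ \zeta^2 C^{k}\,\nu\big(B(y,\rho)\cap F\big),$$
with finiteness automatic (probability measures) and positivity of $\nu(B(y,\rho)\cap F)$ following from $\nu(B(y,\rho)\cap F)\ge\zeta^{-1}\mu(B(x,c^{-1}\rho)\cap E)>0$. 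Thus $\nu$ is doubling with constant $\zeta^2C^{k}$. Applying the same argument to $f^{-1}$ (also bi‑Lipschitz) and the equivalence read in the other direction gives the converse.

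I do not expect a genuine obstacle; the proof is routine. The only points needing a little care are the sign of $\log r$ in the local‑dimension estimate and the use of bijectivity of $f$ (so that every ball of $F$ is centred at some $f(x)$), together with checking that the doubling constant one obtains is finite, i.e.\ that only a bounded number of halvings is needed to absorb the factor $c$.
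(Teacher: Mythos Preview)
Your argument is correct and is precisely the routine verification the authors intend: the paper states that the lemma ``is obvious'' and leaves its proof to the reader, so there is no proof to compare against. One tiny slip: where you write ``$\log(c^{\pm1})/\log r\to 1$'' you mean $\log(c^{\pm1}r)/\log r\to 1$; with that correction the limsup/liminf manipulation goes through as you describe.
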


\begin{proof}[\textbf{The proof of Corollary \ref{cor:spectra}}]
It is a direct consequence of  Theorem \ref{thm:equivalent} and Lemma \ref{thm:equivalence}.
\end{proof}

\section{\textbf{Proof of Theorem \ref{thm:invariants}}}\label{sec:compare}

In this section, we characterize when $\mu_E$ and $\mu_F$ have the same multifractal spectrum.

\subsection{Multifractal spectrum of self-affine measures}
Let $E=K(n,m,\SD)$ be a self-affine carpet. Let
  $\mu_\bp$  be the self-affine measure with the weight $\bp=(p_d)_{d\in \SD}$.
  \cite{King95} and \cite{JR11} determined the multifractal spectrum of $\mu_{\bp}$.
In the following, we describe their results, but only for  the uniform Bernoulli measure $\mu_E$.

Recall that   $\SE=\{j;~~a_j>0\}$ and $s=\#\SE$.
Fix $t>0$.
Define $\beta_E(t)$ to be the unique positive solution of
\begin{equation}\label{def_beta}
m^{\beta_E(t)}{N^{-t}}\underset{j\in \SE}{\sum}    a_j^{\sigma+(1-\sigma)t}=1.
\end{equation}
Set
$$
\alpha_{\min}=\frac{\sigma-1}{\log m}~\log (\max_{j\in \SE} a_j)+\frac{\log N}{\log m},
\quad
\alpha_{\max}=\frac{\sigma-1}{\log m}~\log (\min_{j\in \SE} a_j) +\frac{\log N}{\log m}.
$$

\begin{thm}\label{thm:JR} (\cite{King95, JR11})
For any $\alpha\in(\alpha_{\min},\alpha_{\max})$, we have
\begin{equation}\label{formular_h}
h_E(\alpha)=\text{dim}_H X_{\alpha,E}=\underset{t}{\inf}(\alpha t+\beta_E(t)).
\end{equation}
In other words, $h_E$ is the Legendre transform of $\beta_E$. Furthermore $h_E$
is differentiable with respect to $\alpha$ and is concave.
\end{thm}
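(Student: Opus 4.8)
The plan is to recall the multifractal formalism for Bedford--McMullen measures --- proved by King \cite{King95} under a separation hypothesis and by Jordan--Ramírez \cite{JR11} in general --- and to indicate how it specializes to the uniform Bernoulli measure $\mu_E$. The starting point is that, $E$ being anisotropic, balls are not adapted to the iterated function system and must be replaced by the approximate squares $Q(\bx,\by)$ of rank $k$, whose horizontal and vertical sides $n^{-k}$ and $m^{-\ell(k)}$ are comparable up to the factor $n/m$; since any ball $B(x,r)$ with $n^{-(k+1)}\le r<n^{-k}$ is trapped between a bounded number of approximate squares of ranks in $[k-O(1),k+O(1)]$, one has for every $x$
\[
\overline{d}_{\mu_E}(x)=\limsup_{k\to\infty}\frac{\log\mu_E(Q_k(x))}{-k\log n},\qquad \underline{d}_{\mu_E}(x)=\liminf_{k\to\infty}\frac{\log\mu_E(Q_k(x))}{-k\log n},
\]
with $Q_k(x)$ the rank-$k$ approximate square containing $x$. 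As $\mu_E$ is uniform, a rank-$k$ cylinder has mass $N^{-k}$; subdividing it into the vertical strips forming rank-$k$ approximate squares and using that the $y$-projection $\pi_y\mu_E$ is the self-similar measure on $[0,1]$ generated by $\{y\mapsto(y+j)/m\}_{j\in\SE}$ with weights $a_j/N$, one obtains $\mu_E\bigl(Q(\bx,y_1\cdots y_{\ell(k)})\bigr)=N^{-\ell(k)}\prod_{i=k+1}^{\ell(k)}a_{y_i}$. Consequently $d_{\mu_E}(x)=\alpha$ translates, via $\ell(k)/k\to 1/\sigma$, into a prescription on the asymptotic frequencies with which the admissible rows $j\in\SE$ occur among the ``overflow'' coordinates $y_{k+1},\dots,y_{\ell(k)}$, so that $X_{\alpha,E}$ is a countable union of prescribed-frequency Moran sets for the anisotropic metric.

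\emph{Upper bound.} Fix $t$ and let $\beta=\beta_E(t)$ solve \eqref{def_beta}. One covers $X_{\alpha,E}$ by basic rectangles refined to possibly different horizontal and vertical levels --- the McMullen-type covers that make $\dim_H E$ strictly smaller than $\dim_B E$ in general --- and observes that \eqref{def_beta} is precisely the pressure equation for which the associated weighted Hausdorff sum at exponent $\alpha t+\beta_E(t)$ is critical, the power $\sigma+(1-\sigma)t$ on $a_j$ encoding the optimal trade-off between the two refinement scales. Concretely, for $x\in X_{\alpha,E}$ and $\varepsilon>0$ one has $\mu_E(Q_k(x))\ge(n^{-k})^{\alpha+\varepsilon}$ for all large $k$ (and $\mu_E(Q_k(x))\le(n^{-k})^{\alpha-\varepsilon}$ along a subsequence, the form used when $t<0$), which forces $\mathcal H^{\alpha t+\beta_E(t)+O(\varepsilon)}(X_{\alpha,E})<\infty$; letting $\varepsilon\to0$ and optimizing over $t$ gives $h_E(\alpha)\le\inf_t(\alpha t+\beta_E(t))$.

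\emph{Lower bound and regularity.} Fix $\alpha\in(\alpha_{\min},\alpha_{\max})$ and let $t=t(\alpha)$ be the parameter realizing the Legendre transform. Build a Bernoulli-type measure $\nu_t$ adapted to \emph{both} scales: independently over levels, let each admissible row $j\in\SE$ be chosen with frequency $q_j=a_j^{\sigma+(1-\sigma)t}\big/\sum_{j'\in\SE}a_{j'}^{\sigma+(1-\sigma)t}$, choose the column uniformly among the $a_j$ available ones once the row is fixed, and draw the overflow vertical coordinates from the same law $(q_j)$. The formula for $\mu_E(Q)$ then shows, by the strong law of large numbers, that $\nu_t$-a.e.\ point lies in $X_{\alpha,E}$ and that $\nu_t(Q_k(x))$ decays like $(n^{-k})^{\alpha t+\beta_E(t)}$; the mass distribution principle gives $\dim_H X_{\alpha,E}\ge\dim_H\nu_t=\alpha t+\beta_E(t)$, matching the upper bound and proving \eqref{formular_h}. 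Finally, \eqref{def_beta} defines $t\mapsto\beta_E(t)$ implicitly and real-analytically (the $\beta$-derivative of its left side is $\ln m\neq0$), and a second-derivative computation on \eqref{def_beta} --- a variance of $\log a_j$ against the Gibbs weights $(q_j)$ --- shows that $\beta_E$ has curvature of one sign, which transfers to concavity and interior differentiability of its Legendre transform $h_E$ on $(\alpha_{\min},\alpha_{\max})$.

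\emph{Main obstacle.} The crux will be the lower bound, and within it the two-scale bookkeeping: for a self-similar measure there are no overflow coordinates and the construction is routine, whereas here one must interleave the genuine digit process of length $k$ with the overflow vertical process of length $\ell(k)-k$ and check that the resulting non-self-affine measure $\nu_t$ is \emph{simultaneously} carried by $X_{\alpha,E}$ and of the stated dimension --- and, more delicately, that the local dimension of $\nu_t$ is controlled at \emph{every} scale and not merely along the approximate-square scales. This last point is exactly where \cite{King95} had to impose a separation condition on $\SD$ and where \cite{JR11} deployed a finer, ergodic-theoretic argument in order to dispense with it.
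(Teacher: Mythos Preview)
The paper does not prove this theorem at all: Theorem~\ref{thm:JR} is stated as a cited result from \cite{King95,JR11} and is used as a black box in Section~\ref{sec:compare}. There is therefore no ``paper's own proof'' to compare against. Your sketch is a faithful outline of the argument in those references --- approximate squares replacing balls, the mass formula $\mu_E(Q)=N^{-\ell(k)}\prod_{i>k}a_{y_i}$ (this is Lemma~\ref{lem:offspring+}(i) here), a Gibbs-type measure for the lower bound, and the implicit-function/variance computation for regularity of $\beta_E$ --- and you correctly identify that the delicate step is controlling the local dimension of $\nu_t$ at all scales in the absence of separation, which is precisely the contribution of \cite{JR11} over \cite{King95}.

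One small correction: the second author of \cite{JR11} is Rams, not Ram\'irez.
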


We remark that if $E$ is regular, then $\alpha_{\min}=\alpha_{\max}=\dim_HE$, and $h_E(\alpha_{\min})=\dim_HE$.
The  following lemma will be needed later.

\begin{lemma}\label{lem:Legendre} The function $\beta_E$ is the Legendre transform of $h_E$.
\end{lemma}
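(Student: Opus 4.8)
The plan is to exploit the fact, established in Theorem~\ref{thm:JR}, that $h_E$ is the Legendre transform of $\beta_E$, and to show that the Legendre transform is, in the relevant range, an involution. Concretely, I would first record the precise shape of $\beta_E$: differentiating the defining relation \eqref{def_beta} implicitly with respect to $t$ shows that $\beta_E$ is smooth, and a second differentiation (together with the convexity of $t\mapsto \log\sum_{j\in\SE}a_j^{\sigma+(1-\sigma)t}$, which follows from H\"older / Cauchy--Schwarz applied to the exponential sum) shows that $\beta_E$ is convex. One also checks that $\beta_E'(t)$ ranges over the open interval $(-\alpha_{\max},-\alpha_{\min})$ as $t$ ranges over $\R$: indeed $\beta_E'(t) = -\bigl(\sigma + (1-\sigma)t\bigr)'$-type computation gives $\beta_E'(t)=-\dfrac{\log N - (1-\sigma)\,\frac{\sum_j a_j^{\sigma+(1-\sigma)t}\log a_j}{\sum_j a_j^{\sigma+(1-\sigma)t}}}{\log m}$, whose limits as $t\to\pm\infty$ are $-\alpha_{\min}$ and $-\alpha_{\max}$.

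With convexity and smoothness of $\beta_E$ in hand, the statement becomes the standard duality theorem for Legendre transforms: if $g$ is a smooth strictly convex function on $\R$ and $g^*(\alpha)=\inf_t(\alpha t + g(t))$ is its Legendre(--Fenchel) transform, then $g^{**}=g$ on the interior of the domain of $g^*$; moreover because the infimum defining $h_E(\alpha)$ is attained at the unique $t=t(\alpha)$ with $\beta_E'(t)=-\alpha$, we have the conjugate pair of relations $h_E(\alpha) = \alpha\, t(\alpha) + \beta_E(t(\alpha))$ and $h_E'(\alpha) = t(\alpha)$. I would then verify directly that $\inf_\alpha\bigl(h_E(\alpha) - \text{(something)}\bigr)$... more precisely I want to show $\beta_E(t) = \inf_{\alpha}\bigl(h_E(\alpha) - \alpha t\bigr)$, or equivalently $\beta_E(t) = \sup_\alpha\bigl(\beta_E(t)\bigr)$-type tautology. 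The clean way: fix $t_0\in\R$, let $\alpha_0=-\beta_E'(t_0)\in(\alpha_{\min},\alpha_{\max})$. For that $\alpha_0$ the infimum in \eqref{formular_h} is attained at $t_0$ (strict convexity), so $h_E(\alpha_0)=\alpha_0 t_0 + \beta_E(t_0)$, giving $\beta_E(t_0) = h_E(\alpha_0) - \alpha_0 t_0 \ge \inf_\alpha\bigl(h_E(\alpha)-\alpha t_0\bigr)$. Conversely, for every $\alpha$ with $\alpha_{\min}<\alpha<\alpha_{\max}$ we have $h_E(\alpha)\le \alpha t_0 + \beta_E(t_0)$ by definition of the infimum, i.e. $h_E(\alpha)-\alpha t_0 \le \beta_E(t_0)$; taking the infimum over $\alpha$ gives the reverse inequality. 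Hence $\beta_E(t_0)=\inf_{\alpha\in(\alpha_{\min},\alpha_{\max})}\bigl(h_E(\alpha)-\alpha t_0\bigr)$, which is exactly the assertion that $\beta_E$ is the Legendre transform of $h_E$.

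The main obstacle, and the only place requiring care, is the treatment of the boundary/unbounded behaviour: the Legendre transform is usually phrased over all of $\R$ (or over the effective domain), whereas Theorem~\ref{thm:JR} only asserts the formula \eqref{formular_h} for $\alpha$ in the \emph{open} interval $(\alpha_{\min},\alpha_{\max})$, and $h_E$ outside that interval is either undefined or could a priori be $-\infty$/irrelevant to the spectrum. So I would be explicit that the Legendre transform of $h_E$ is taken as an infimum over $\alpha\in(\alpha_{\min},\alpha_{\max})$, and note that this causes no loss because $\beta_E'(t)$ stays inside $(-\alpha_{\max},-\alpha_{\min})$ for all real $t$, so the optimal $\alpha$ for any given $t$ already lies in that open interval; hence enlarging the range of $\alpha$ would not change the infimum. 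In the regular case ($\alpha_{\min}=\alpha_{\max}$) the statement degenerates and should be dealt with by a one-line remark (both transforms collapse to the affine function $t\mapsto \dim_H E - \alpha_{\min} t$, i.e. $\beta_E(t) = (1-t)\dim_H E$ up to the normalization here). I do not expect any genuinely hard estimate; the content is entirely the convex-duality bookkeeping plus the elementary convexity of $\beta_E$ coming from \eqref{def_beta}.
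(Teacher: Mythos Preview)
Your approach coincides with the paper's: both rest on the involutive property of the Legendre transform. The paper's proof is two lines---it cites King \cite[Theorem~1]{King95} for the concavity of $\beta_E$ and Zorich \cite[p.~262]{Zorich} for the biconjugation theorem---while you propose to spell out the duality argument by hand. Methodologically there is nothing different; your version is just an explicit unpacking of the Zorich citation.

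One discrepancy to flag: you assert that $\beta_E$ is convex with $\beta_E'(t)\in(-\alpha_{\max},-\alpha_{\min})$, but the paper's proof records that $\beta_E$ is \emph{concave}. Solving \eqref{def_beta} gives
\[
\beta_E(t)=\frac{t\log N-\log\sum_{j\in\SE}a_j^{\,\sigma+(1-\sigma)t}}{\log m},
\]
whose second summand is minus a log-sum-exp; hence $\beta_E$ is concave and $\beta_E'(t)=\bigl(\log N-(1-\sigma)\langle\log a\rangle_t\bigr)/\log m$ (no leading minus) ranges over $(\alpha_{\min},\alpha_{\max})$. With concavity rather than convexity, your step ``the infimum in \eqref{formular_h} is attained at $t_0$ (strict convexity)'' fails as written, since $t\mapsto \alpha t+\beta_E(t)$ is then concave and has no interior minimum. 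The duality you want still holds for concave functions with the obvious $\inf$/$\sup$ and sign adjustments, so this is a bookkeeping slip rather than a gap in the idea; but you should redo the sign check against \eqref{def_beta} before committing to the direct argument.
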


\begin{proof} It is shown that  $\beta_E$ is a concave function, see King \cite[Theorem 1]{King95}.
Under this circumstance, $\beta_E$ is the Legendre transform of $h_E$ (see Zorich \cite[Page 262]{Zorich}).
\end{proof}

\subsection{When $\mu_E$ and $\mu_F$ have the same multifractal spectrum}
Let $F=K(n,m, \SD')$ be another self-affine carpet.
Let $(b_j)_{j=0}^{m-1}$ be the distribution sequence of $\SD'$,
let $N'=\#\SD'$,  $\SE'=\{j;~b_j>0\}$ and $s'=\#\SE'$.

Similarly,  fix $t>0$  and define $\beta_F(t)$ to be the unique positive solution of
\begin{equation}\label{def:betaF}
m^{\beta_F(t)}{(N')^{-t}}\underset{j\in \SE'}{\sum}    b_j^{\sigma+(1-\sigma)t}=1.
\end{equation}

Recall that  $\{a_j:~ j\in \SE\}=\{a_1^*>a_2^*>\dots>a_{\tilde{p}}^*\}$,
$\{b_j:~ j\in \SE'\}=\{b_1^*>b_2^*>\dots >b_{\tilde{q}}^*\},$
 $M_i$ is the occurrence of $a_i^*$ in $(a_j)_{j=0}^{m-1}$
and  $M_i'$ is the occurrence of $b_i^*$ in $(b_j)_{j=0}^{m-1}$. (See Section 1.)

\begin{proof}[\textbf{Proof of Theorem \ref{thm:invariants}}]
First, we prove that   $\mu_E$ and $\mu_F$ have the same multifractal spectrum implies \eqref{eq:invariants}.
  In this case, either both $E$ and $F$ are regular or none of them is  regular, since  $\alpha_{\min}=\alpha_{\max}$ if $\mu_E$ is regular and $\alpha_{\min}<\alpha_{\max}$ otherwise.

If both $E$ and $F$ are regular, then  $\tilde{p}=\tilde{q}=1$ and $\dim_B E=\dim_B F$.
By the dimension formula \eqref{eq:box}, it is easy to show that   \eqref{eq:invariants} holds.

Now we assume that neither $E$ nor $F$ is regular.
By Lemma \ref{lem:Legendre}, $\beta_E=\beta_F$ since they are the Legendre transform of a same function.
Therefore,
\begin{equation}\label{equ_EF}
\frac{1}{N^t}\underset{j\in \SE}{\sum}  a_j^{\sigma+(1-\sigma)t}
=\frac{1}{(N')^t} \underset{j\in \SE'}{\sum}   b_j^{\sigma+(1-\sigma)t}\quad \text{for }~ t>0.
\end{equation}
 In terms of $M_j, a_j^*$ and $M_j', b_j^*$, we obtain
\begin{equation}\label{equ_EFMax}
\frac{1}{N^t}\sum_{j=1}^{\tilde p}M_j (a^*_j)^\sigma \cdot (a^*_j)^{(1-\sigma)t}
=\frac{1}{(N')^t} \sum_{j=1}^{\tilde q} M'_j (b^*_j)^\sigma \cdot (b^*_j)^{(1-\sigma)t}\quad \text{for }~ t>0.
\end{equation}
Setting $x_j= N'(a^*_j)^{1-\sigma}$ for $1\leq j \leq \tilde p$ and  $y_j=N(b^*_j)^{1-\sigma}$ for $1\leq j \leq \tilde  q$, we obtain
\begin{equation}\label{beta_EF}
\sum_{j=1}^{\tilde p}M_j (a^*_j)^\sigma \cdot x_j^t=
\sum_{j=1}^{\tilde q}M'_j (b^*_j)^\sigma \cdot y_j^t
\quad \text{for }~ t>0.
\end{equation}
We note that $x_j\geq N'$ and $y_j\geq N$.
Moreover, since $E$ and $F$ are irregular, we have $N,N'\geq 3$, and consequently  $\ln x_j, \ln y_j>1$.

Taking the $k$-th derivative of $t$ to both sides of \eqref{beta_EF}, we get
\begin{equation}\label{main_equ}
\sum_{j=1}^{\tilde p}M_j (a^*_j)^\sigma \cdot (\ln x_j)^k \cdot x_j^t=
\sum_{j=1}^{\tilde q}M'_j (b^*_j)^\sigma \cdot (\ln y_j)^k \cdot y_j^t
\quad~\text{ for }~ t>0.
\end{equation}

First, we claim that
$x_1=y_1.$
 Notice that  $x_1=N' (a_1^*)^{1-\sigma}$
is strictly larger than the other $x_j$ ,
and $y_1=N(b_1^*)^{1-\sigma}$ is strictly larger than the other $y_j$.
Fix $t>0$,
  then both sides of \eqref{main_equ}  are exponential functions of the variable   $k$,
   and $(\ln x_1)^k$ and $(\ln y_1)^k$
   are the major terms of the left and right hand side, respectively.
    This forces that $x_1=y_1$, and our claim is proved. Consequently, we have
 $\frac{a_1^*}{b_1^*}=\left ( \frac{N}{N'} \right )^{1/(1-\sigma)}.$
Furthermore, since the coefficients of the major terms must equal,  we get
$
\frac{a_1^*}{b_1^*}=\left ( \frac{M_1'}{M_1} \right )^{1/\sigma}.
$
Subtracting the  term involving $x_1$ and $y_1$ in \eqref{main_equ},
 and repeating  the above argument,
we have $\tilde p=\tilde q$, $x_j=y_j$ for $j=2,\dots, \tilde p$,
  and   the coefficients of the terms involving $x_j$
and $y_j$ coincide.
 Summing up the above discussion, we obtain
\begin{equation}\label{sec2-eq4}
\frac{a_j^*}{b_j^*}=\left ( \frac{N}{N'} \right )^{1/(1-\sigma)}
 =\left ( \frac{M_j'}{M_j} \right )^{1/\sigma}
\end{equation}
for all $j=1,\dots,\tilde p$.
It follows that
$$
\frac{s'}{s}=\frac{\sum_{j=1}^{\tilde p} M'_j}{\sum_{j=1}^{\tilde p} M_j}=\left ( \frac{N}{N'} \right )^{\sigma/(1-\sigma)},
$$
which together with \eqref{sec2-eq4} imply  \eqref{eq:invariants}.

For implication of the other direction, it is easy to show that  \eqref{eq:invariants} implies \eqref{equ_EF},
which means
$\beta_E(t)=\beta_F(t)$ for all $t>0$.
Therefore, $\mu_E$ and $\mu_F$ have the same multifractal spectrum by Theorem \ref{thm:JR}.
 The theorem is proved.
\end{proof}

\section{\bf{Approximate squares of  self-affine carpets}}\label{sec:vacant}

Let $E=K(n,m,\SD)$ be a self-affine carpet.
Let $\bx=x_1\dots x_k\in \{0,1,\dots, n-1\}^k$ and $\by=y_1\dots y_{\ell(k)}\in \{0,1,\dots, m-1\}^{\ell(k)}$.
Recall that
$
{Q}(\bx,\by)=({0.\bx}|_n, {0.\by}|_m)+
 \left [0,\frac{1}{n^k}\right ]\times \left [0,\frac{1}{m^{\ell(k)}}\right ]
$
 is  an  approximate square  of rank $k$, if $(x_j, y_j)\in \SD$ for $j\leq k$ and
$y_j\in \SE$ for $j>k$ (see Section \ref{sec:geo}).

Let $Q$ and $Q'$ be two approximate squares. We say  $Q'$ is  an \emph{offspring} of $Q$ if $Q'\subset Q$, and it is called a
\emph{direct offspring}  of $Q$
if the rank of $Q'$ equals the rank of $Q$ plus $1$. We use $\bx\ast \by$ to denote the concatenation of two words.
The following lemma is obvious, see for instance \cite{M84, King95}.

 \begin{lemma}\label{lem:offspring}  Let $E=K(n,m,\SD)$ be a self-affine carpet. Let $Q(\bx, \by)$ be an approximate square   of rank $k$ of $E$. Then

 (i) if $\ell(k)>k$, then the direct offsprings of $Q(\bx, \by)$ are
 $$
\left \{Q(\bx \ast u, \by \ast \bz);~(u, y_{k+1})\in \SD \text{ and } \bz\in \SE^{\ell(k+1)-\ell(k)}\right \},
$$
and $Q(\bx, \by)$ has $a_{y_{k+1}}\cdot s^{\ell(k+1)-\ell(k)}$ direct offsprings.

(ii)  if $\ell(k)=k$, then the direct offsprings of $Q(\bx, \by)$ are
$$
\left \{Q(\bx \ast u, \by \ast v \ast \bz);~(u, v)\in \SD \text{ and } \bz\in \SE^{\ell(k+1)-(k+1)}\right \},
$$
 and $Q(\bx, \by)$ has $Ns^{\ell(k+1)-(k+1)}$ direct offsprings.
\end{lemma}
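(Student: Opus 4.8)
The plan is to prove this purely by unwinding the definition of approximate square; no analysis is involved, only careful bookkeeping on the coordinate words. The first step is to record the nesting fact underlying the notion of offspring: if $Q(\bx,\by)$ has rank $k$ and $Q(\bx',\by')$ has rank $k'\ge k$, then $Q(\bx',\by')\subseteq Q(\bx,\by)$ exactly when $\bx$ is a prefix of $\bx'$ and $\by$ is a prefix of $\by'$. This is immediate from the $n$-adic nesting of the horizontal coordinate intervals $[\,0.\bx|_n,\ 0.\bx|_n+n^{-k}\,]$ and the $m$-adic nesting of the vertical coordinate intervals $[\,0.\by|_m,\ 0.\by|_m+m^{-\ell(k)}\,]$: a length-$k$ base-$n$ word is recovered from the left endpoint of its interval, and an inclusion of coordinate intervals pins that left endpoint into the sub-interval of length $n^{-k}$ that is half-open on the right, hence fixes the first $k$ digits; likewise vertically. (If one prefers, this structural statement is exactly the one used in \cite{M84, King95}.)

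Taking $k'=k+1$ in the nesting fact, a direct offspring of $Q(\bx,\by)$ must be of the form $Q(\bx\ast u,\ \by\ast\bz)$ where $u\in\{0,\dots,n-1\}$ is one new horizontal digit and $\bz\in\{0,\dots,m-1\}^{\ell(k+1)-\ell(k)}$ is the block of new vertical digits; note $\ell(k+1)-\ell(k)\ge 1$, because $1/\sigma=\log n/\log m>1$ gives $\ell(k+1)=\lfloor(k+1)/\sigma\rfloor\ge\lfloor k/\sigma\rfloor+1=\ell(k)+1$. The next step is to decide which pairs $(u,\bz)$ make $Q(\bx\ast u,\by\ast\bz)$ an approximate square of rank $k+1$. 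Comparing the constraints defining a rank-$(k+1)$ approximate square with those already guaranteed by $Q(\bx,\by)$ being a rank-$k$ approximate square, the only genuinely new conditions are: (a) at the coupling position $k+1$, the pair $(x_{k+1},y_{k+1})$ with $x_{k+1}=u$ must lie in $\SD$; and (b) $y_j\in\SE$ for every new vertical position $\ell(k)<j\le\ell(k+1)$, i.e. every letter of $\bz$ must lie in $\SE$ — this is where the factor $s^{\ell(k+1)-\ell(k)}$ (or $s^{\ell(k+1)-(k+1)}$) comes from. The positions $j\le k$ are unchanged, and the positions $k+1<j\le\ell(k)$ (if any) already carry the constraint $y_j\in\SE$ from $Q(\bx,\by)$.

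It then remains to split on the location of position $k+1$ relative to $\ell(k)$. If $\ell(k)>k$, then $k+1\le\ell(k)$, so $y_{k+1}$ is the $(k+1)$-st letter of $\by$ and is already in $\SE$; condition (a) reads $(u,y_{k+1})\in\SD$ and has exactly $a_{y_{k+1}}$ solutions in $u$, while $\bz$ ranges over $\SE^{\ell(k+1)-\ell(k)}$, so there are $a_{y_{k+1}}\cdot s^{\ell(k+1)-\ell(k)}$ direct offsprings, parametrized as in (i). If $\ell(k)=k$, then $\by$ has length $k$ and the letter at position $k+1$ is the first letter of $\bz$; writing $\bz=v\ast\bz'$ with $\bz'$ of length $\ell(k+1)-k-1=\ell(k+1)-(k+1)$, condition (a) reads $(u,v)\in\SD$ ($N$ choices for the pair) and $\bz'$ ranges over $\SE^{\ell(k+1)-(k+1)}$, giving $N\cdot s^{\ell(k+1)-(k+1)}$ direct offsprings, parametrized as in (ii) after renaming $\bz'$ to $\bz$. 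In both cases the parametrization is injective (distinct $(u,\bz)$ give distinct words, hence distinct squares), so the sets and counts claimed in (i) and (ii) are exactly the direct offsprings. The only place that warrants any care is the boundary case in the nesting fact of the first paragraph — a rank-$(k+1)$ square sharing part of the boundary of $Q(\bx,\by)$ — but this is settled by the left-endpoint argument indicated there, so I do not anticipate a real obstacle.
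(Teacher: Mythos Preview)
Your proof is correct: the prefix characterization of containment, the case split on whether position $k+1$ lies in $\by$ or in $\bz$, and the resulting counts are all handled cleanly. The paper does not actually prove this lemma---it simply declares it obvious and refers to \cite{M84, King95}---so your write-up is a faithful and complete unpacking of exactly the bookkeeping the paper leaves implicit.
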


Now we give some notations.
Let ${\mathbf E}_k $ be the union of all approximate squares of rank $k$.
Let $U$ be a connected component of $\mathbf E_k$;
 hereafter, we will  call $U$  a \emph{component of $\mathbf E_k$} for simplicity. An approximate square of rank $k$ contained in $U$ will be called a \emph{member} of $U$.
  Denote by $\#_k(U)$ the number of members of $U$.

 In $\{0,1,\dots, m-1\}^k$, we set $\prec$ to be the lexicographical order;   we denote
 by $(c_1\dots c_k)^+$ the word larger than and adjacent  to $c_1\dots c_k$.
 For $j\in \SE$, we define $\varphi_j(y)=(y+j)/m$. Denote $\pi(x,y)=y$.

  We shall show that if $E$ possesses vacant rows, then $\#_k(U)$ has a uniform upper bound.

\begin{lemma}\label{lem:cartesian}
Let $E=K(n,m ,{\cal D})\in {\mathcal M}_{t,v}(n,m)$.
Let  $U$ be a component of $\mathbf E_k$. Then
 $$\#_k(U)\leq (m-1)  M_0:=L_0$$
 where $ M_0$ is the constant in Theorem \ref{thm:finite-type}.
 Moreover, there exists $\by\in \SE^{\ell(k)-1}$
such that either $\pi(U)\subset \varphi_{\by}([0,1])$ or
$\pi(U)\subset \varphi_{\by}([0,1])\cup \varphi_{\by^+}([0,1])$.
\end{lemma}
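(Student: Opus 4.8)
The plan is to exploit the topological restriction from Theorem \ref{thm:finite-type} together with the product-like structure of approximate squares. First I would observe that a component $U$ of $\mathbf E_k$ is contained in a connected component $\widetilde U$ of $\widetilde{\mathbf E}_{k}$ (the approximation by basic rectangles), since each approximate square of rank $k$ sits inside a basic rectangle of rank $k$ and the containment is compatible with connectedness; here one must be slightly careful because $\ell(k)\ge k$ means an approximate square is a \emph{union} of the bottom parts of $s^{\ell(k)-k}$ basic rectangles stacked vertically, but the point is that $\widetilde U$ contains the basic rectangles meeting $U$. By Theorem \ref{thm:finite-type}, $\widetilde U$ consists of at most $M_0$ basic rectangles of rank $k$. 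The key geometric fact I would then use is that all approximate squares of rank $k$ have the same \emph{vertical} side length $m^{-\ell(k)}$, while a basic rectangle of rank $k$ has vertical side length $m^{-k}$, which is $s^{\ell(k)-k}$ times longer; but crucially, within one column (fixed $\bx$) the number of distinct approximate squares of rank $k$ is controlled by how many admissible vertical coordinates $\by$ there are, and the projection $\pi(U)$ being an interval-like set forces these to line up.

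The main idea for the bound: project $U$ onto the $y$-axis. Each member $Q(\bx,\by)$ of $U$ has $\pi(Q(\bx,\by)) = [0.\by|_m, 0.\by|_m + m^{-\ell(k)}]$, an interval of length $m^{-\ell(k)}$ whose left endpoint is of the form $0.y_1\dots y_{\ell(k)}|_m$ with $y_j\in\SE$ for $j>k$. Since $E$ has a vacant row, $\SE$ is a \emph{proper} subset of $\{0,\dots,m-1\}$, so not every length-$m^{-\ell(k)}$ dyadic-type interval is admissible; in particular consecutive admissible intervals are separated by gaps. Now $\pi(U)$ is connected? Not quite — $\pi$ of a connected set is connected, so $\pi(U)$ is an interval $I$. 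The members of $U$ project into $I$, and since $\SE\ne\{0,\dots,m-1\}$, the intervals $\varphi_{\by}([0,1])$ for $\by\in\SE^{\ell(k)-k}$ (i.e. the level-$(\ell(k))$ cylinders lying over a given level-$k$ slot $y_1\dots y_k$) do not tile $[0,1]$: within each $\varphi_{y_1\dots y_k}$-copy of $[0,1]$ the admissible sub-intervals occupy only an $\SE$-Cantor pattern. Hence $I$, being an interval, can meet the admissible vertical strips of at most a bounded number of level-$k$ slots — and here is where $M_0$ enters: the level-$k$ basic rectangles in $\widetilde U$ number at most $M_0$, so their $y$-projections are at most $M_0$ intervals of length $m^{-k}$, which $I$ must be covered by (up to the rectangles actually meeting $U$); an interval $I$ meeting $\ell$ such length-$m^{-k}$ intervals and contained in their union has $|I|\le \ell m^{-k}$. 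Combined with the Cantor gap structure inside each, the count of admissible length-$m^{-\ell(k)}$ sub-intervals $I$ can hit is at most $(m-1)$ per basic-rectangle-column-worth, giving $\#_k(U)\le (m-1)M_0$. I would make the ``$m-1$'' precise by noting that within one level-$k$ column the admissible $y$-coordinates $y_{k+1}\dots y_{\ell(k)}$ all lie in $\SE^{\ell(k)-k}$, and the interval $I$ restricted there, having length $\le m^{-k}$, can contain at most $\#\SE \le m-1$ — wait, more carefully $s=\#\SE\le m-1$ so there are at most $s^{\ell(k)-k}$ in a full column but $I$ spanning $\le$ one column crosses at most... I would instead argue: the members of $U$ in a single level-$k$ basic rectangle form a set whose $y$-projections are disjoint intervals inside an $\SE$-self-similar Cantor set of ratio $1/m$; since the Cantor set's largest gap has length $\ge m^{-k}/m \cdot(\text{something})$ — rather, I will use directly that consecutive admissible $\by$'s with the same $y_1\dots y_k$ but differing in later coordinates are either adjacent or separated, and a connected $I$ forces adjacency, so at most $m-1$ of them (the number of admissible "runs" is bounded because $\SE$ omits at least one residue, so $\SE^{\ell(k)-k}$ viewed as left-endpoints has at least one gap in every block of $m$). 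This pins $\#_k(U)\le (m-1)M_0=:L_0$.

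For the ``moreover'' part, I would use that $\pi(U)=I$ is an interval of length at most $\#_k(U)\cdot m^{-\ell(k)}\le L_0 m^{-\ell(k)}$, which is $\le m^{-\ell(k)+1} \cdot (L_0/m)$ — in any case, much shorter than $m^{-(\ell(k)-1)}$ once we absorb constants, so $I$ meets at most... hmm, to get exactly ``one or two'' of the intervals $\varphi_{\by}([0,1])$, $\by\in\SE^{\ell(k)-1}$, which each have length $m^{-(\ell(k)-1)}$, I need $|I|<m^{-(\ell(k)-1)}$, i.e. $\#_k(U)<m$, which is \emph{not} guaranteed. So instead I would phrase it correctly as the lemma does: $\pi(U)$ need not be short, but it is connected, hence an interval; the intervals $\{\varphi_{\by}([0,1]):\by\in\SE^{\ell(k)-1}\}$ form a Cantor-type family, and because $\SE$ omits a residue, \emph{any} interval that meets three of them consecutively would have to contain a whole gap plus a whole $\varphi_{\by}$-interval, forcing $|I|\ge m^{-(\ell(k)-1)}$; but then $I$ contains a full $\varphi_{\by}([0,1])$ which contains more than $L_0$ admissible rank-$k$ approximate squares (since a full such interval supports $s^{?}\ge$ many), contradicting $\#_k(U)\le L_0$ — \emph{provided} we choose $k$ large, or rather this works structurally because a full $\varphi_{\by}$ copy contains all approximate squares over it, which exceeds $L_0$ as soon as $s^{\ell(k)-(\ell(k)-1)}\cdot(\text{min }a_j)$ is large; if it is not large, one re-examines small cases directly. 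Hence $I$ meets at most two of the $\varphi_{\by}([0,1])$, and they must be adjacent ($\by$ and $\by^+$), which is the claim.

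\textbf{Main obstacle.} The delicate point is the quantitative step: converting ``$U$ lies in a union of $\le M_0$ basic rectangles'' and ``$E$ has a vacant row'' into the sharp constant $(m-1)M_0$ and into the two-interval dichotomy. The cleanest route is probably to work entirely with the $y$-projection $\pi(U)$, show it is an interval, and prove a clean ``gap lemma'': since $\SE\subsetneq\{0,\dots,m-1\}$, the set of admissible left-endpoints of rank-$k$ approximate squares inside any fixed rank-$(k-?)$ slot has the property that you cannot fit more than $m-1$ of them, nor meet three parent intervals, inside any interval that does not already swallow a whole parent interval; then invoke $\#_k(U)\le$ (number of basic rectangles)$\times(m-1)$. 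I expect the bookkeeping of ``which level the vacant row kicks in'' — i.e. reconciling the $\ell(k)$ vs $k$ bookkeeping of approximate squares (Lemma \ref{lem:offspring}) with the basic-rectangle count — to be the part requiring the most care.
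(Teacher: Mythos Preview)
Your approach is essentially the paper's --- the ingredients (Theorem~\ref{thm:finite-type}, the projection $\pi$, the vacant-row hypothesis) are exactly right --- but you have the two assertions in the wrong logical order, and this is what generates the ``main obstacle'' you describe.

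The paper proves the ``moreover'' clause \emph{first}, by a direct argument that makes no appeal to the bound $\#_k(U)\le L_0$. The observation is simply this: $\pi(U)$ is connected, hence an interval, and $\pi(U)$ equals a union of intervals $\varphi_{\by'}([0,1])$ with $\by'\in\SE^{\ell(k)}$ (these being the vertical projections of the members of $U$). If $\pi(U)$ were not contained in the union of two adjacent level-$(\ell(k)-1)$ intervals, then $\pi(U)$, being an interval, would contain an entire interval $\varphi_{\bz}([0,1])$ with $|\bz|=\ell(k)-1$; in particular it would contain $\varphi_{\bz\ast j_0}([0,1])$ for the omitted $j_0\notin\SE$, contradicting that $\pi(U)$ is a union of admissible level-$\ell(k)$ intervals. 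No ``small $k$'' case, no circular use of $L_0$.

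Once this is established, the count is immediate. The component $\widetilde U$ of $\widetilde{\mathbf E}_k$ containing $U$ has at most $M_0$ basic rectangles. Within any single basic rectangle $R$, the members of $U$ lying in $R$ project into the interval $\pi(U)$, and hence correspond to a run of \emph{consecutive} words $\by'\in\SE^{\ell(k)}$. But among any $m$ consecutive words in $\{0,\dots,m-1\}^{\ell(k)}$ the last coordinate cycles through all of $\{0,\dots,m-1\}$, including the vacant value; so such a run has length at most $m-1$. Hence each basic rectangle contributes at most $m-1$ members to $U$, and $\#_k(U)\le (m-1)M_0$.

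Your attempt to deduce the ``moreover'' part from the bound $\#_k(U)\le L_0$ (arguing that a full $\varphi_{\by}$ would carry too many approximate squares) is what forces the awkward case analysis and the ``provided $k$ is large'' caveat. Reverse the order and both halves become short arguments with no bookkeeping at all.
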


\begin{proof}
Since $E$ contains vacant row, then $\pi(U)$ is contained either in
$\varphi_{\by}([0,1])$ or
in $\varphi_{\by}([0,1])\cup \varphi_{\by^+}([0,1)$
for some $\by \in \SE^{\ell(k)-1}$, which confirms the second assertion.

%Recall that $\widetilde{\mathbf E}_k=\bigcup_{\bi\in \SD^k} S_{\bi}([0,1]^2)$.
Let $Q$ be the connected component of $\widetilde{\mathbf E}_k$ containing $U$. Then $Q$ contains at most
$M_0$ number of basic rectangles of rank $k$ by Theorem \ref{thm:finite-type}. Since $E$ contains vacant row,
then every basic rectangle in $Q$ contribute at most $m-1$ approximate squares to $U$,
so $\#_k(U)\leq (m-1)  M_0$.
The lemma is proved.
\end{proof}

\begin{remark}\label{lem:disconnected} {\rm  \textbf{(A Criterion for totally disconnectedness.)}
Let $E=K(n,m,{\mathcal D})$ and ${\mathcal D}$ possess vacant rows. If $E$ has a non-trivial connected component, then the component must be a horizontal line segment.  Hence, $E$ is totally disconnected if and only if $ a_j<n$ for all $0\leq j\leq m-1$.
}
\end{remark}

For a self-affine carpet $F$, we use $\mathbf {F}_{k}$ to denote the union of all approximate squares of rank $k$ of $F$.
The following is a second variation of   \cite[Lemma 3.2]{FaMa92}.

\begin{lemma}\label{lem:Falconer+}
 Let $E, F\in {\cal M}_{t,v}(n,m)$.
Let $f:E\to F$ be a bi-Lipschitz map. Then there exists  integer $p_0$
such that,
 for any $k\geq 1$ and any component  $U$  of $\mathbf {E}_{k}$,
  there exist a group of components  of $\mathbf {F}_{k+p_0}$, which we denote by $J_j$, $1\leq j\leq q$,
such that
\begin{equation}\label{eq:Falconer}
f(U \cap E)=\bigcup_{j=1}^q (J_j\cap F),
\end{equation}
and all $J_j$ are offsprings of a component $I$ of $\mathbf F_{k-p_0}$.
\end{lemma}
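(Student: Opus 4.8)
The plan is to mimic the proof of Lemma \ref{lem:Falconer}, but to run it simultaneously in both directions so as to control the ``ancestor'' component $I$ as well. First I would fix $M_0$ large enough that Theorem \ref{thm:finite-type} holds for both $E$ and $F$, and fix a common bi-Lipschitz constant $C_0$ for $f$ and $f^{-1}$. Note that by Lemma \ref{lem:cartesian} a component of $\mathbf E_k$ (resp. $\mathbf F_k$) has diameter at most $2M_0/m^{\lceil k/?\rceil}$; more precisely, since each component of $\widetilde{\mathbf E}_k$ consists of at most $M_0$ basic rectangles of rank $k$, which have height $m^{-\ell(k)}$ and width $n^{-k}\le m^{-\ell(k)}$, any component $U$ of $\mathbf E_k$ satisfies $\mathrm{diam}(U)\le 2M_0 m^{-\ell(k)}$, and similarly two points in distinct components of $\mathbf E_k$ are at distance $\ge n^{-k}\ge m^{-\ell(k)-1}$ apart (here I would use that $\ell(k)\le k/\sigma< \ell(k)+1$). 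The choice of $p_0$ must satisfy two inequalities at once: $2M_0 m^{-(k+p_0)/\sigma}\cdot \text{const} < C_0^{-1} n^{-k}$ guarantees (exactly as in Lemma \ref{lem:Falconer}) that the $f$-image of each component of $\mathbf F_{k+p_0}$ is entirely inside or entirely outside $f(U\cap E)$, which yields \eqref{eq:Falconer}; and the symmetric inequality, applied to $f^{-1}$ and to components of $\mathbf E_{k-p_0}$ versus $\mathbf F_k$, guarantees that $f^{-1}$ of each component of $\mathbf E_{k-p_0}$ is entirely inside or outside any given component-image. Since only finitely many constants (all depending only on $m,n,M_0,C_0$) are involved and the relevant lengths decay geometrically, one fixed $p_0$ works for all $k$ simultaneously.

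The new ingredient is the last assertion, that all the $J_j$ lie in a single component $I$ of $\mathbf F_{k-p_0}$. Here I would argue as follows. First, $U\cap E$ is contained in a single component $I'$ of $\mathbf E_{k-p_0}$ (since $U$, a component of $\mathbf E_k$, refines the coarser partition into components of $\mathbf E_{k-p_0}$; this uses the nesting of approximate squares, Lemma \ref{lem:offspring}, together with the observation that a component of $\mathbf E_k$ cannot straddle two components of $\mathbf E_{k-p_0}$). Applying the ``inside-or-outside'' dichotomy for $f^{-1}$ obtained above with roles reversed — i.e. choosing $p_0$ also so that $2M_0 m^{-(k)/\sigma}\cdot\text{const} < C_0^{-1} m^{-(k-p_0)/\sigma}\cdot\text{const}$ — I get that $f(I'\cap E)$ is a union of whole components of $\mathbf F_{k-p_0}$. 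Now all the $J_j$ meet $f(U\cap E)\subset f(I'\cap E)$, and $f(I'\cap E)$ being connected-component-saturated at level $k-p_0$... is not quite enough; instead I would use that $I'\cap E$ is connected-in-the-appropriate-sense so that its image, being the bi-Lipschitz image of a ``chained'' set, cannot be split among several components of $\mathbf F_{k-p_0}$ that are mutually far apart. Concretely: if two of the $J_j$ lay in different components $I_1\ne I_2$ of $\mathbf F_{k-p_0}$, then $d(I_1,I_2)\ge n^{-(k-p_0)}$, so their $f^{-1}$-preimages would be subsets of $U\cap E$ at distance $\ge C_0^{-1}n^{-(k-p_0)}$; but $U$ is a single component of $\mathbf E_k$, hence contained in a component of $\widetilde{\mathbf E}_k$ of diameter $\le 2M_0 m^{-\ell(k)}$, and the choice of $p_0$ makes $2M_0 m^{-\ell(k)} < C_0^{-1} n^{-(k-p_0)}$, a contradiction. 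Thus all $J_j$ lie in one component $I$ of $\mathbf F_{k-p_0}$, and since $J_j\subset \mathbf F_{k+p_0}\subset \mathbf F_{k-p_0}$ they are offsprings of $I$.

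I expect the main obstacle to be bookkeeping the three geometric inequalities so that a \emph{single} $p_0$ serves all of them for \emph{all} $k\ge 1$: one needs (a) $f$-images of level $k+p_0$ components resolve $f(U\cap E)$, (b) $f^{-1}$-images of level $k-p_0$ components resolve the relevant image sets, and (c) the diameter bound $2M_0 m^{-\ell(k)} < C_0^{-1} n^{-(k-p_0)}$ forcing the single-ancestor conclusion. Each reduces, after substituting $\ell(k)\ge k/\sigma - 1$ and $m^{1/\sigma}=n$, to an inequality of the shape $\mathrm{const}\cdot n^{-(k+p_0)} < C_0^{-1} n^{-k}$ or $\mathrm{const}\cdot n^{-k}<C_0^{-1}n^{-(k-p_0)}$, i.e. $n^{p_0}>\mathrm{const}\cdot C_0$ with the $k$-dependence canceling; so a sufficiently large $p_0$ works uniformly. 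Once that is in place, the argument is a routine adaptation of Lemma \ref{lem:Falconer}, and I would present it by first stating the choice of $p_0$ explicitly, then proving \eqref{eq:Falconer} verbatim as before, then proving the single-ancestor claim by the contradiction in the previous paragraph.
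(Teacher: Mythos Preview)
Your plan is correct and is essentially the paper's argument. For the single-ancestor assertion, the paper proceeds more economically: it simply reapplies the inside--or--outside dichotomy to $f^{-1}$ at levels $(k-p_0,k)$ (i.e.\ with the roles of $E,F$ swapped), obtaining that for every component $V$ of $\mathbf F_{k-p_0}$ one has either $U\cap E\subset f^{-1}(V\cap F)$ or they are disjoint; taking $I$ to be the unique $V$ meeting $f(U\cap E)$ finishes. Unpacking that dichotomy is exactly your final diameter contradiction, so your detour through a component $I'$ of $\mathbf E_{k-p_0}$ is unnecessary.

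One bookkeeping correction: a component of $\widetilde{\mathbf E}_k$ is a union of at most $M_0$ \emph{basic rectangles} of rank $k$, which have height $m^{-k}$, so its diameter is only bounded by $2M_0 m^{-k}$ --- too weak for a $k$-uniform $p_0$. The bound you actually need, $\mathrm{diam}(U)\le 2L_0\, m^{-\ell(k)}\le 2L_0 m\, n^{-k}$, comes directly from Lemma~\ref{lem:cartesian} ($U$ consists of at most $L_0$ \emph{approximate squares} of size $n^{-k}\times m^{-\ell(k)}$), not via $\widetilde{\mathbf E}_k$. With this fix your three inequalities all reduce to $n^{p_0}>\text{const}\cdot C_0$, exactly as you anticipated; the paper takes $p_0=\lfloor\log_n(2mC_0L_0)\rfloor+1$.
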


\begin{proof} Let $C_0$ be a Lipschitz constant of $f$.
Set $p_0=\lfloor\log_n (2mC_0L_0)\rfloor+1$ where $L_0$ is the constant in Lemma \ref{lem:cartesian}.
Let $J$ be a  connected component of $\mathbf F_{k+p_0}$. We claim that
\begin{equation}\label{relation_EF}
\text{either}~~(J\cap F)\subset f(U\cap E), ~~~\text{or}~~(J\cap F)\cap(f(U\cap E))=\emptyset.
\end{equation}
The proof of   the claim
is exactly the same as the proof of Lemma \ref{lem:Falconer} and we omit it.
Clearly \eqref{relation_EF} implies \eqref{eq:Falconer}.

Now we prove that all $J_j$ are offsprings of a component $I$ of $\mathbf F_{k-p_0}$.
Applying the above claim  to the map  $f^{-1}:~~ F\rightarrow E$,
 we obtain that  for any component $W\in \mathbf{E}_k$   and any component $V\in \mathbf{F}_{k-p_0},$
 it holds that
$$
\text{either}  ~~~W\cap E\subset f^{-1}(V\cap F)~~~\text{or}~~~(W\cap E)\cap f^{-1}(V\cap F)=\emptyset.
$$
Set $W=U$ and let $I$ be the component of $\mathbf{F}_{k-p_0}$ such that
$U\cap E\subset f^{-1}(I\cap F),$ then
$
\bigcup_{j=1}^q (J_j\cap F)=f(U\cap E)\subset I\cap F,
$
the lemma is proved.
\end{proof}

\section{\textbf{Arithmetic doubling property}}\label{sec:arithmetic}

In this section, we  show that if  $\mu_E$ is doubling, then it is also doubling in an arithmetic sense.

\textbf{Notations about words.}
We use $\varepsilon_0$ to denote the empty word.
Let $S$ be the shift operator on words defined by ${ S}(c_1\dots c_k)=c_2\dots c_k$. Set $\chi^q(x_1\dots x_k)=x_1\dots x_q$  to be the
 prefix of $x_1\dots x_k$ with length $q$; especially $\chi(x_1\dots x_k)=x_1$.

  For a word $\bc=c_1\dots c_k$ over integers, we denote  $\prod \bc=\prod_{j=1}^kc_j;$ we make the convention that the value of the empty word $\varepsilon_0$ is $1$.

\textbf{Functions related to the distribution sequence.} For $j\in\{0,1,\dots, m-1\}$, we denote  $a(j)=a_j$;
 moreover, for $y_1\dots y_k\in \{0,1,\dots, m-1\}^k$, we define $a(y_1\dots y_k)$ to be
 the word $a(y_1)\dots a(y_k).$
Denote
$$
{\cal A}=\{a_j;~j\in {\mathcal E}\}=\{a_1^*,\dots, a_{\tilde p}^*\}.
$$
Recall that $M_j$ is the occurrence of $a_j^*$ in the distribution sequence. We define
$M: {\cal A}\to \{M_1,\dots, M_{\tilde p}\}$ by
$M(a_j^*)=M_j$; moreover, if $c_1\dots c_k\in  {\cal A}^k$,
we define
$M(c_1\dots c_k)=\prod_{j=1}^k M(c_j).$

\begin{defi} {\rm Let $W=Q(\bx,  \by)$ be an approximate square of rank $k$, where $\by=y_1\dots y_{\ell(k)}$. We define its \emph{color} to be the word $a(y_{k+1}y_{k+2}\dots y_{\ell(k)})$ over ${\cal A}$ if $\ell(k)>k$, and to be the empty word $\varepsilon_0$ if $\ell(k)=k$.
 }
\end{defi}

The next lemma counts the number of offsprings of an approximate square.

\begin{lemma}\label{lem:offspring+} Let $E\in {\mathcal M}_{t,v}(n,m)$, and let $W=Q(\bx, \by)$ be an approximate square of rank $k$ with color $\bc$.

(i) $\mu_E(W)= \prod \bc/N^{\ell(k)} .$

(ii) If $\ell(k)=k$, the set of colors of  direct offsprings of $W$%=Q(\varepsilon_0, \varepsilon_0)=[0,1]^2$
is $ {\cal A}^{\ell(k+1)-k-1},$
 and for any $\bw'$ in the above set, the number of  direct offsprings with  color $\bw'$ is $N  M(\bw')$.

(iii) If $\ell(k)>k$,   the set of colors of  direct offsprings of $W$  is
$$
\{S(\bc\ast\bz);~ \bz\in {\cal A}^{\ell(k+1)-\ell(k)}\},
$$
and for each $\bw'=S(\bc\ast\bz)$,   the
 number of   direct offsprings with this color is $\chi(\bc)M(\bz)$.
\end{lemma}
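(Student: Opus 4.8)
The plan is to prove the three parts in order, using Lemma \ref{lem:offspring} to describe the combinatorial structure of the offsprings and then translating it into statements about colors, measures, and multiplicities.

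\textbf{Part (i).} First I would compute $\mu_E(W)$ directly. An approximate square $W=Q(\bx,\by)$ of rank $k$ with $\by=y_1\dots y_{\ell(k)}$ is, by definition, the union of those basic rectangles $R(\bx,y_1\dots y_k y_{k+1}'\dots y_{\ell(k)}')$ of rank $\ell(k)$ whose vertical addresses extend $y_1\dots y_k$ by symbols in $\SE$ that stay inside $W$; more precisely $W$ is the union of the rank-$\ell(k)$ cylinders sitting above it. By Lemma \ref{lem:boundary} these are disjoint in measure, so $\mu_E(W)$ is the sum over all admissible extensions of the measures of the corresponding rank-$\ell(k)$ cylinders. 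Each rank-$\ell(k)$ cylinder has $\mu_E$-measure $N^{-\ell(k)}$ since $\mu_E$ is the uniform Bernoulli measure. For a fixed prefix $(x_j,y_j)_{j\le k}$, the number of ways to choose $(x_{k+1}',\dots,x_{\ell(k)}')$ so that $(x_i',y_i)\in\SD$ for $k<i\le\ell(k)$ (with $y_i$ already fixed inside the column strip) is exactly $\prod_{i=k+1}^{\ell(k)} a_{y_i} = \prod a(y_{k+1}\dots y_{\ell(k)}) = \prod\bc$. Hence $\mu_E(W)=\prod\bc/N^{\ell(k)}$. When $\ell(k)=k$ the color is $\varepsilon_0$, $\prod\bc=1$, and $\mu_E(W)=N^{-k}$, which is consistent.

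\textbf{Parts (ii) and (iii).} These are bookkeeping on top of Lemma \ref{lem:offspring}. For (ii), when $\ell(k)=k$, Lemma \ref{lem:offspring}(ii) says the direct offsprings are $Q(\bx\ast u,\by\ast v\ast\bz)$ with $(u,v)\in\SD$ and $\bz\in\SE^{\ell(k+1)-k-1}$; the color of such an offspring is $a(\bz)$, which ranges over $\cal A^{\ell(k+1)-k-1}$ as $\bz$ ranges over $\SE^{\ell(k+1)-k-1}$. Given a target color $\bw'=a(z_1)\dots a(z_{\ell(k+1)-k-1})$, the number of $\bz$'s realizing it is $\prod_i \#\{z\in\SE: a_z=a(z_i)\} = \prod_i M(a(z_i)) = M(\bw')$, and for each such $\bz$ there are $\#\SD = N$ choices of $(u,v)$, giving $N M(\bw')$ direct offsprings of color $\bw'$. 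For (iii), when $\ell(k)>k$, Lemma \ref{lem:offspring}(i) gives direct offsprings $Q(\bx\ast u,\by\ast\bz)$ with $(u,y_{k+1})\in\SD$ and $\bz\in\SE^{\ell(k+1)-\ell(k)}$. If $\bc=a(y_{k+1}\dots y_{\ell(k)})$ is the color of $W$, then the color of the offspring is $a(y_{k+2}\dots y_{\ell(k)}\, z_1\dots z_{\ell(k+1)-\ell(k)})$, which is exactly $S(\bc)$ with $a(\bz)$ appended, i.e. $S(\bc\ast a(\bz))$; writing $\bz$'s contribution as a word over $\cal A$ this is $S(\bc\ast\bz')$ with $\bz'\in\cal A^{\ell(k+1)-\ell(k)}$, matching the claimed set of colors. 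For a fixed target $\bw'=S(\bc\ast\bz')$, the number of vertical extensions $\bz$ realizing $\bz'$ is $M(\bz')$, and the number of $u$ with $(u,y_{k+1})\in\SD$ is $a_{y_{k+1}}=\chi(\bc)$ (the first letter of the color of $W$). Multiplying gives $\chi(\bc)M(\bz')$ direct offsprings of color $\bw'$.

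I do not anticipate a serious obstacle here: the lemma is essentially a careful unwinding of definitions together with Lemma \ref{lem:offspring}. The one point that needs a little care is the indexing in part (iii) — keeping straight that passing from $W$ to a direct offspring drops the symbol $y_{k+1}$ from the ``used'' part of the vertical address (hence the shift $S$) while appending the freshly chosen symbols $\bz$, and that the multiplicity splits as a horizontal factor $\chi(\bc)=a_{y_{k+1}}$ coming from the choice of $u$ and a vertical factor $M(\bz')$ coming from the choice of $\bz$ among $\SE$-words with prescribed $a$-values. Once the notation is set up correctly, each count is immediate.
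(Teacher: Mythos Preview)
Your proposal is correct and follows essentially the same route as the paper. The only minor difference is that for part (i) the paper simply cites \cite{M84,King95} rather than writing out the cylinder count, and for (ii)--(iii) the paper restates the offspring conditions directly (your equations are their \eqref{color-0} and \eqref{color-1}) instead of invoking Lemma~\ref{lem:offspring}; the counting arguments are otherwise identical.
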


 \begin{proof} (i) This is proved in \cite{M84,King95} for general self-affine measures.

 (ii)
 Let $W'=Q(\bx\ast x_{k+1},\by\ast y_{k+1}\dots y_{\ell(k+1)})$ be a direct offspring of
 $Q(\bx,\by)$,  we have
 \begin{equation}\label{color-0}
 (x_{k+1},y_{k+1})\in\SD ~~ \text{and}~~ y_j\in \SE \text{ for }j=k+2,\dots, \ell(k+1).
 \end{equation}
% Denote $\bz=a(y_{k+1}\dots y_{\ell(k+1)})\in{\cal A}^{\ell(k+1)-k}$,
 The color of $W'$ is $a(y_{k+2}\dots y_{\ell(k+1)})\in {\cal A}^{\ell(k+1)-k-1}$.
This proves the first assertion of (ii).
Once a color $\bw'$ is fixed, we have $N$ choices of $(x_{k+1},y_{k+1})$, and we have
$M(a(y_j))$ choice for $y_j$, $k+2\leq j\leq \ell(k+1)$, so
$W$ has
$N  M(a(y_{k+2}\cdots y_{\ell(k+1)}))=N   M(\bw') $ number of direct offsprings with the color $\bw'$.

(iii) Let $W'=Q(\bx*x_{k+1},\by*y_{\ell(k)+1}\cdots y_{\ell(k+1)})$ be a direct offspring of $Q(\bx,\by)$, then
\begin{equation}\label{color-1}
(x_{k+1},y_{k+1})\in\SD ~~\text{and}~~ y_j\in \SE \text{ for } j=\ell(k)+1, \dots, \ell(k+1).
\end{equation}
Denote
$\bz=a(y_{\ell(k)+1}\dots y_{\ell(k+1)})\in{\cal A}^{\ell(k+1)-\ell(k)}.$
Since $\bc=a(y_{k+1}\cdots y_{\ell(k)})$,  the color of $W'$ is
$$a(y_{k+2}\cdots y_{\ell(k)}y_{\ell(k)+1}\dots y_{\ell(k+1)})=S(\bc\ast\bz).$$
On the other hand,  fix a color  $\bw'=S(\bc\ast\bz)$, the choices of $x_{k+1}$
is $a(y_{k+1})=\chi(\bc)$, and for each $j\geq \ell(k)+1 $, the choices of $y_j$ is
$M(a(y_j))$, so the total number of choices is
$$
a(y_{k+1})M(a(y_{\ell(k)+1}\dots y_{\ell(k+1)}))=\chi(\bc) M(\bz).
$$
The lemma is proved.
\end{proof}

The next lemma says if $\mu_E$ is doubling, then the approximate squares  in a component of $\mathbf E_k$  have `almost' the same   color.

\begin{lemma}\label{lem:differ} Let $E\in {\mathcal M}_{t,v,d}(n,m)$, and let $U$ be a component of $\mathbf E_k$.
Then the colors of two members of $U$  differ  at most at two entries.
\end{lemma}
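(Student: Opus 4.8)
The strategy is to combine the geometric constraint from Lemma \ref{lem:cartesian} (that a component $U$ of $\mathbf E_k$ has projection $\pi(U)$ contained in one or two adjacent $\varphi_{\by}$-intervals, so all members of $U$ share $\by=y_1\dots y_{\ell(k)-1}\in\SE^{\ell(k)-1}$ and differ only in the last vertical symbol $y_{\ell(k)}$) with the doubling hypothesis, which controls how much $\mu_E$-measures of neighbouring approximate squares may differ. First I would fix two members $W=Q(\bx,\by)$ and $W'=Q(\bx',\by')$ of $U$. By Lemma \ref{lem:cartesian}, their vertical words agree up to position $\ell(k)-1$; the color of an approximate square of rank $k$ is $a(y_{k+1}\dots y_{\ell(k)})$, so the colors $\bc$ of $W$ and $\bc'$ of $W'$ already agree in all but the last entry. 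Then I would trace this back through the offspring structure: each member of $U$ at rank $k$ sits inside a chain of ancestor approximate squares at ranks $k-1,k-2,\dots$, and by Lemma \ref{lem:cartesian} applied at each rank the two chains stay in a common component, so at every rank the two ancestors' vertical words agree except possibly at the last position — i.e. the two chains differ by at most ``one boundary shift'' at each level.

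The key point is to bound, using doubling, how far apart two members of $U$ can be so that only finitely many entries of the color word are affected. Concretely, I would use Lemma \ref{lem:offspring+}(i): $\mu_E(W)=\prod\bc/N^{\ell(k)}$. Since $W$ and $W'$ lie in a single connected component $U$ of $\mathbf E_k$, there is a ball of radius comparable to $m^{-\ell(k)}$ (the vertical side length) containing all of $U$; comparing $\mu_E(W)$ and $\mu_E(W')$ through a bounded chain of doubling steps gives $\mu_E(W)/\mu_E(W')\asymp 1$ with a constant independent of $k$. Hence $\prod\bc\asymp\prod\bc'$. Now $\bc$ and $\bc'$ are words of the same length $\ell(k)-k$ over the finite alphabet $\mathcal A=\{a_1^*,\dots,a_{\tilde p}^*\}$; since the possible values of an entry are bounded away from $1$ and from each other, the ratio $\prod\bc/\prod\bc'$ can stay bounded only if $\bc$ and $\bc'$ differ in at most a bounded number $c$ of positions. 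Combined with the observation above that the difference is ``localized at the right end'' at each ancestral level, this forces the number of differing entries to be at most two — the last entry (from the ambiguity $\by$ vs.\ $\by^+$ at the top level in Lemma \ref{lem:cartesian}) plus at most one entry inherited from a single boundary crossing at the lower ranks; a careful bookkeeping of where in the color word the two chains can possibly diverge pins the count at two.

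The main obstacle I expect is the bookkeeping in the last step: translating ``the two ancestor chains stay inside common components at every rank'' into a precise statement that the colors differ at exactly the two prescribed locations, rather than merely ``boundedly many.'' This requires carefully identifying, for each rank $r\le k$, the position in the color word of $W$ at rank $k$ that records the vertical symbol $y_{\ell(r)}$ (respectively $y_{r+1},\dots$), and checking that the only positions where $\bc$ and $\bc'$ may disagree are those corresponding to a level where Lemma \ref{lem:cartesian} returns the two-interval alternative $\varphi_{\by}\cup\varphi_{\by^+}$. Since such a split can occur only once along the chain before the two members must reunite into the same approximate square (otherwise the measures would diverge, contradicting doubling via the quantitative estimate above), the total is at most two. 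I would present the level-by-level analysis using $\ell(r)$ and the shift operator $S$ exactly as in Lemma \ref{lem:offspring+}(iii), which makes the position-tracking transparent, and invoke the doubling constant only once, globally, to cap the number of splits.
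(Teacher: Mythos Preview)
Your plan has two genuine gaps. First, you misread Lemma~\ref{lem:cartesian}: it only says $\pi(U)\subset\varphi_{\by^*}([0,1])\cup\varphi_{(\by^*)^+}([0,1])$ for some $\by^*\in\SE^{\ell(k)-1}$, and the successor $(\by^*)^+$ is formed \emph{with carrying}. If $\by^*=y_1\dots y_{h-1}y_h(m-1)^{\ell(k)-1-h}$ then $(\by^*)^+=y_1\dots y_{h-1}(y_h+1)0^{\ell(k)-1-h}$, so the two prefixes differ at every position from $h$ to $\ell(k)-1$, not just the last one. The same error contaminates your ancestor-chain argument at every rank. Second, the step ``$\prod\bc/\prod\bc'$ bounded $\Rightarrow$ $\bc$ and $\bc'$ differ in boundedly many positions'' is simply false for words over ${\cal A}$: with ${\cal A}=\{2,3\}$ the words $(2,3,2,3,\dots)$ and $(3,2,3,2,\dots)$ have equal products yet differ at every entry. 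So the soft doubling bound on the measure ratio cannot by itself yield a Hamming-distance bound, and your bookkeeping never recovers the specific number two.

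The paper's proof bypasses both issues by invoking the explicit Li--Wei--Wen characterization recorded in Remark~\ref{rem:Wen}: $\mu_E$ is doubling iff (i) no two consecutive rows are occupied, or (ii) $a_0a_{m-1}=0$, or (iii) $a_0=a_{m-1}$. Cases (i) and (ii) force the one-interval alternative in Lemma~\ref{lem:cartesian}, so all members of $U$ share the prefix $y_1\dots y_{\ell(k)-1}$ and the colors differ only at the last entry. In case (iii), one is in the carrying scenario above, but the crucial identity $a(0)=a(m-1)$ makes the colors agree at all the carried positions $h+1,\dots,\ell(k)-1$; only positions $h$ and $\ell(k)$ can differ. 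This arithmetic identity, extracted from the doubling hypothesis via the characterization, is the missing idea in your approach.
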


\begin{proof} Pick  $Q(\bx,\by),Q(\bx',\by')\in U$.
By Remark \ref{rem:Wen}, $\mu_E$ is doubling if and only if at least one of the following
condition holds: (i) $a_ja_{j+1}=0$ holds for $0\leq j\leq m-2$; (ii) $a_0a_{m-1}=0$;  (iii) $a_0=a_{m-1}$.

In case of (i), $Q(\bx,\by)$ and $Q(\bx',\by')$ must be located in the same row, which implies that
$\by=\by'$.

In case of (ii), we have $\pi(U)\subset \varphi_{\chi^{\ell(k)-1}(\by)}([0,1])$, so $\chi^{\ell(k)-1}(\by)=\chi^{\ell(k)-1}(\by')$,
then the colors of $Q(\bx,\by),Q(\bx',\by')$ can only differ at the last entries.

In case of (iii),  by Lemma \ref{lem:cartesian}, there exists
a word $\by^*\in \SE^{\ell(k)-1}$, such that $\pi(U)$  falls into the following two cases.

\emph{Case (i).}  $\pi(U)\subset \varphi_{\by^*}([0,1])$.

In this case, we have $\chi^{\ell(k)-1}(\by)=\chi^{\ell(k)-1}(\by')=\by^*$.

\emph{Case (ii).}  $\pi(U)\subset \varphi_{\by^*}([0,1])\cup \varphi_{(\by^*)^+}([0,1])$.

Then
there exists $h\geq 1$ such that $y_{h}, y_{h}+1\in \SE$, and
$$
\by^*=y_1\dots y_{h-1} y_h   (m-1)^{\ell(k)-h-1}  , \quad
(\by^*)^+ =y_1\dots y_{h-1}(y_{h}+1)0^{\ell(k)-h-1} .
$$
Since $a(0)=a(m-1)$, we see that the colors of $Q(\bx,\by)$ and $Q(\bx',\by')$ are digit-wisely equal except at the positions
 $h$ and $\ell(k)$.
\end{proof}

The following lemma shows that measures of approximate squares in a component of $\mathbf E_k$  change slowly in an arithmetic sense.

\begin{lemma}\label{lem:UU'} Let $E\in {\mathcal M}_{t,v,d}(n,m)$ and $U$ be a component of $\mathbf E_k$. Let $L_0$ be the constant in Lemma \ref{lem:cartesian} and denote $\mu=\mu_E$. Then

 (i) Let $B$ be a member of $U$, then
 $$
 \frac{\mu(U)}{\mu(B)}\in  \frac{\Z\cap (0,L_0n^{2m}) }{(a_1^*\cdots a_{\tilde p}^*)^2}.
 $$

 (ii) If    $U'$  is a direct offspring of $U$, then  there exists a positive integer
 $H$ (independent of the choice of $k$ and $U$) such that
 $$
 \frac{\mu(U')}{\mu(U)}\in \frac{\Z}{H}.
 $$
\end{lemma}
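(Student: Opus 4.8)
The plan is to compute both ratios directly from the offspring-counting data in Lemma \ref{lem:offspring+}, using the fact (Lemma \ref{lem:differ}) that all members of a component $U$ of $\mathbf E_k$ carry colors that agree except in at most two entries, together with the uniform bound $\#_k(U)\le L_0$ from Lemma \ref{lem:cartesian}. For part (i), I would fix a member $B=Q(\bx,\by)$ of $U$ with color $\bc$, so that by Lemma \ref{lem:offspring+}(i) we have $\mu(B)=\prod\bc/N^{\ell(k)}$, and for every other member $B'$ of $U$ with color $\bc'$ we have $\mu(B')=\prod\bc'/N^{\ell(k)}$. Since $\bc$ and $\bc'$ differ in at most two entries, the ratio $\mu(B')/\mu(B)=\prod\bc'/\prod\bc$ is a quotient of two products of at most two entries of $\{a_1^*,\dots,a_{\tilde p}^*\}$; clearing denominators, $(a_1^*\cdots a_{\tilde p}^*)^2\cdot\mu(B')/\mu(B)$ is a positive integer, and since each $a_i^*\le n-1$ (totally disconnectedness, Remark \ref{lem:disconnected}) and the number of members is at most $L_0$, this integer is bounded by $L_0 n^{2m}$. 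Summing over the at most $L_0$ members gives
\[
\frac{\mu(U)}{\mu(B)}=\sum_{B'\in U}\frac{\mu(B')}{\mu(B)}\in\frac{\Z\cap(0,L_0 n^{2m})}{(a_1^*\cdots a_{\tilde p}^*)^2},
\]
as claimed.

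For part (ii), I would compute $\mu(U')/\mu(U)$ by grouping the members of $U'$ according to which member of $U$ they descend from. Fix a reference member $B_0$ of $U$ with color $\bc_0$. For each member $B$ of $U$, with color $\bc$, Lemma \ref{lem:offspring+}(ii)–(iii) describes exactly how many direct offsprings of $B$ have each possible color, and each such offspring lies in the component $U'$ (a direct offspring of $U$ is a union of direct offsprings of the members of $U$). Thus $\mu(U')=\sum_{B'\in U',\,B'\subset U'}\mu(B')$ is a sum of at most $L_0\cdot(\text{number of offspring colors})$ terms, each of the form $(\text{count})\cdot\prod(\text{color})/N^{\ell(k+1)}$; dividing by $\mu(U)=\sum_{B\in U}\prod\bc/N^{\ell(k)}$, the factor $N^{\ell(k+1)-\ell(k)}$ cancels against the counts (each count is a multiple of either $N$ or $\chi(\bc)\le n-1$ by Lemma \ref{lem:offspring+}), and what remains is a ratio whose numerator and denominator are sums of products of boundedly many $a_i^*$'s. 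Multiplying through by a fixed common denominator — one may take $H=(a_1^*\cdots a_{\tilde p}^*)^{2m}$, or more carefully the product of all the $\prod\bc$ appearing, which divides $(a_1^*\cdots a_{\tilde p}^*)^{L_0}$ raised to a bounded power — shows $H\mu(U')/\mu(U)\in\Z$ with $H$ depending only on $n,m,\SD$ and not on $k$ or $U$.

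The step I expect to be the genuine obstacle is making part (ii) uniform: one must check that after the cancellation of $N^{\ell(k+1)-\ell(k)}$, no $k$-dependent quantity survives in either the numerator or the denominator of $\mu(U')/\mu(U)$. This is where Lemma \ref{lem:differ} is essential — because all colors in $U$ agree outside a two-entry window, one can factor out the common part of $\prod\bc$ over all members $B$ of $U$ (and similarly for $U'$, using that $U'$ is a direct offspring so its members differ from those of $U$ only in the controlled way of Lemma \ref{lem:offspring+}), so the surviving numerator and denominator involve only a bounded number of factors drawn from $\{a_1^*,\dots,a_{\tilde p}^*\}$ and the counts, all of which are bounded in terms of $n$, $m$, $L_0$ alone. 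I would organize the argument so that this factoring is done first, reducing to a finite (size bounded independently of $k$) arithmetic identity, after which the choice of $H$ is immediate. The bound $L_0 n^{2m}$ in part (i) and the constant $H$ in part (ii) are deliberately crude; optimizing them is unnecessary for the applications in Sections \ref{sec:preserve} and \ref{sec:irrational}.
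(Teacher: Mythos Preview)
Your treatment of part~(i) is correct and is essentially the paper's argument. The gap is in part~(ii). You write that for each member $B$ of $U$, ``each such offspring lies in the component $U'$''; but a component $U$ of ${\mathbf E}_k$ generally has \emph{several} direct offsprings in ${\mathbf E}_{k+1}$, and $U'$ is only one of them. Thus only some of the rank-$(k+1)$ approximate squares descended from members of $U$ are members of $U'$, and the counts in Lemma~\ref{lem:offspring+}(ii)--(iii) do not enumerate the members of $U'$. Your claimed cancellation of $N^{\ell(k+1)-\ell(k)}$ against those counts is valid only when one sums over \emph{all} offspring squares, in which case the ratio is trivially~$1$.

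The paper bypasses this combinatorics. Pick a single member $B'$ of $U'$, let $B$ be its direct ancestor (a member of $U$), and factor
\[
\frac{\mu(U')}{\mu(U)}=\frac{\mu(U')}{\mu(B')}\cdot\frac{\mu(B')}{\mu(B)}\cdot\frac{\mu(B)}{\mu(U)}.
\]
Part~(i) applied to $U'$ puts the first factor in $\Z/(a_1^*\cdots a_{\tilde p}^*)^2$. For the third factor, part~(i) gives $\mu(U)/\mu(B)=q/(a_1^*\cdots a_{\tilde p}^*)^2$ with $q\in\Z\cap(0,L_0 n^{2m})$; since every such $q$ divides $(L_0 n^{2m})!$, one gets $\mu(B)/\mu(U)\in\Z/(L_0 n^{2m})!$. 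The middle factor comes directly from Lemma~\ref{lem:offspring+}(i): if $\ell(k)>k$ and $\bc$ is the color of $B$, then the color of $B'$ is $S(\bc\ast\bu)$ for some $\bu\in{\mathcal A}^{\ell(k+1)-\ell(k)}$, so
\[
\frac{\mu(B')}{\mu(B)}=\frac{\prod\bu}{\chi(\bc)\,N^{\ell(k+1)-\ell(k)}}\in\frac{\Z}{(a_1^*\cdots a_{\tilde p}^*)\,N^{\lfloor 1/\sigma\rfloor+1}},
\]
using $\ell(k+1)-\ell(k)\le\lfloor 1/\sigma\rfloor+1$; the case $\ell(k)=k$ is similar. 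The product of the three fixed denominators is the required $H$. Your ``factor out the common part'' idea can be pushed through, but once you confront the denominator $\sum_{B\in U}\prod\bc_B$ you are forced into exactly this factorial trick, and the argument collapses to the paper's three-term decomposition anyway.
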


\begin{proof} First, we show that  if $B_1$ and $B_2$ are two members of $U$, then
 \begin{equation}\label{eq:BB}
 \frac{\mu(B_1)}{\mu(B_2)}\in \frac{\Z\cap (0, n^{2m})}{(a_1^*\cdots a_{\tilde p}^*)^2}.
 \end{equation}
 Let $\bc_1$ and $\bc_2$ be the colors of $B_1$ and $B_2$, respectively. Then
by Lemma \ref{lem:offspring+} (i) and Lemma \ref{lem:differ},
$$
\frac{\mu(B_1)}{\mu(B_2)}=\frac{\prod \bc_1}{\prod \bc_2}=\frac{\bc_1(i)\bc_1(j)}{\bc_2(i)\bc_2(j)}.
$$
 Notice that $\frac{\bc_1(i)\bc_1(j)}{\bc_2(i)\bc_2(j)}\cdot(a_1^*\cdots a_{\tilde p}^*)^2$
is an integer,  and it is less than $n^{2m}$ because $a_j^*<n$ and $\tilde p<m$. This proves \eqref{eq:BB}.

(i)  Let $B_j$, $j=1,\dots, h$ be the members of $U$. By \eqref{eq:BB},
$$
\frac{\mu(U)}{\mu(B)}=\frac{\sum_{j=1}^h \mu(B_j)}{\mu(B)}=\frac{\sum_{j=1}^h n_j}{(a_1^*\cdots a_{\tilde p}^*)^2},
$$
where $n_j$ are integers in $(0, n^{2m})$. Meanwhile, $h\leq L_0$ by Lemma \ref{lem:cartesian}, so $\sum_{j=1}^h n_j< L_0 n^{2m}$.  This proves (i).

(ii) Let $B'$ be a member of  $U'$ and let $B$ be its direct ancestor. Clearly $B$ is a member
of $U$.
Let $\bw$ and $\bw'$ be the colors of $B$ and $B'$, respectively.
If $\ell(k)>k$, by Lemma \ref{lem:offspring+},
 $\bw'=S(\bw \ast \bu)$ for some $\bu\in {\mathcal A}^{\ell(k+1)-\ell(k)}$, and  it follows that
\begin{equation}\label{eq:BB'}
\frac{\mu(B')}{\mu(B)}=\frac{\prod S(\bw \ast \bu)/ N^{\ell(k+1)} }{\prod \bw /N^{\ell(k)}}
=\frac{\prod  \bu }{\chi(\bw) N^{\ell(k+1)-\ell(k)}} \in \frac{\Z}{(a_1^*\cdots a_{\tilde p}^*)N^{\lfloor1/\sigma \rfloor+1  }}.
\end{equation}
If $\ell(k)=k$, we have $\bw=\varepsilon_0$ and $\bw'\in {\cal A}^{\ell(k+1)-k-1}$, and
it is easy to see that the inclusion relation of
\eqref{eq:BB'} still holds.

By item (i) we just proved, we have
$$
\frac{\mu(B)}{\mu(U)}\in \frac{\Z}{(L_0n^{2m})!}.
$$
Set $H=(L_0n^{2m})!\cdot {(a_1^*\cdots a_{\tilde p}^*)^3 N^{\lfloor1/\sigma \rfloor+1  }}$,  we obtain (ii).
\end{proof}

\section{\textbf{Measure preserving property  }}\label{sec:preserve}

In this section, we prove the measure preserving property of
bi-Lipschitz maps between sets in ${\cal M}_{t,v,d}(n,m)$.
For  a self-affine carpet $E$, we  denote by ${\mathcal C}_{E,k}$ the collection of components of $\mathbf E_k$, and set ${\mathcal C}_E=\bigcup_{k\geq 0} {\mathcal C}_{E,k}$, where we set ${\mathcal C}_0=\{[0,1]^2\}$ by convention.

Suppose $f:~E\to F$ is a bi-Lipschitz map with Lipschitz constant $C_0$.
Define
$$\tau(U)=\frac{\mu_F(f(U\cap E))}{\mu_E(U\cap E)}, \quad \text{ where } U\in {\mathcal C}_E.$$
Since  the measures $\mu_F\circ f$ and $\mu_E$ are equivalent (Theorem \ref{thm:equivalent}), we have
$$
\lambda=\sup_{U\in {\mathcal C}_E} \tau(U)<\infty.
$$

\begin{proof}[\textbf{Proof of Theorem \ref{thm:measure} }] Recall that $E_{\bi}=S_{\bi}(E)$ for $\bi\in{\cal D}^k$.
Denote ${\cal B}=\{b_1^*,\dots, b_{\tilde p}^*\}$.
Since any $U\in {\mathcal C}_E$ contains a cylinder and vice versa,
the theorem holds if and only if
 there exists $U\in {\mathcal C}_E$ such that $f:~(U\cap E,\mu_E)\to (f(U\cap E),\mu_F)$ is measure preserving.
Suppose on the contrary that the theorem is false. Then for any $U\in {\mathcal C}_E$, $f|_{U\cap E}$  is not measure preserving.

 First, we observe that $\tau(U)<\lambda$ for any $U\in {\mathcal C}_E$; for otherwise,   $f|_{U\cap E}$ is measure preserving by the maximality of $\lambda$.

Let $H$ be the constant in Lemma \ref{lem:UU'}(ii) and  $p_0$  the constant in Lemma \ref{lem:Falconer+}.
Set
$$\epsilon=\frac{1}{2(1+H^{2p_0+2})}.$$
 By the definition of $\lambda$,  there exists  $U\in {\mathcal C}_E$  such that
$$\lambda(1-\epsilon)<\tau(U)<\lambda.$$
Let $k$ be the rank of $U$.
Let $U_1,\dots, U_p$ be the direct offsprings of $U$. Then either $\tau(U_j)=\tau(U)$ for all $1\leq j\leq p$,
 or there exists $1\leq h\leq p$ such that
 $\tau(U_h)> \tau(U).$
We may assume without loss of generality that the second scenario occurs, since  $f|_{U\cap E}$ is not measure preserving, then there exists a sequence $U=V_0, V_1,\dots, V_r$
 in ${\cal C}_E$ such that $V_i$ is a direct offspring
 of $V_{i-1}$ for $i=1,\dots, r$, and $\tau(V_0)=\cdots=\tau(V_{r-1})\neq \tau(V_r)$.
 Without loss of generality, we may assume that $\tau(V_{r-1})< \tau(V_r)$.
 So we can replace $U$ by $V_{r-1}$ to start our discussion.
In the following,  we estimate
 $\displaystyle \tau(U_h)={\mu_F(f(U_h\cap E))}/{\mu_E(U_h)}.$
By Lemma \ref{lem:Falconer+}, $f(U\cap E)$ can be decomposed into
$$f(U \cap E)=\bigcup_{j=1}^q (J_j\cap F),$$
where $J_j$ are components of  $\mathbf F_{k+p_0}$, and  are offsprings of a component $J^*\in \mathbf F_{k-p_0}$.
Applying
Lemma \ref{lem:Falconer+} to $U_h$, we see that there exists $I_1,\dots, I_t$, which are components of $\mathbf F_{k+1+p_0}$, such that $f(U_h\cap E)=\bigcup_{i=1}^t (I_i\cap F)$. (Obviously  $I_1,\dots, I_t$ are offsprings of $J^*$.)

Denote
$$\alpha={\mu_F(J^*)}/{H^{2p_0+1}}.$$
Notice that    $f(U\cap E)$ consists of  $2p_0$-step offsprings of $J^*$, and
$f(U_h\cap E)$  consists of $(2p_0+1)$-step offsprings of $J^*$, we obtain that
both of the measures of them are multiples of $\alpha$ by Lemma \ref{lem:UU'}(ii).
Denote
$$u=\mu_F(f(U\cap E))/\alpha \text{ \ and \ } u'=\mu_F(f(U_h\cap E))/\alpha.$$

Since $f(U\cap E)\subset J^*$, we have
$u\leq {\mu_F(J^*)}/{\alpha}=H^{2p_0+1}.$
Moreover,
\begin{equation}\label{eq:Lu}
\tau(U_h) =\frac{u' \alpha}{\mu_E(U_h)}= \frac{u'}{u}\cdot \frac{\mu_E(U)}{\mu_E(U_h)}\cdot \tau(U).
\end{equation}
 By Lemma \ref{lem:UU'} and ${\mu_E(U_h)}/{\mu_E(U)}\leq 1$,
   the denominator of ${\mu_E(U)}/{\mu_E(U_h)}$ is smaller than $H$.
Since $\tau(U_h)>\tau(U)$,  we have
$$
\tau(U_h)-\tau(U)\geq  \frac{1}{Hu}\cdot \tau(U)\geq \frac{1}{H^{2p_0+2}} \tau(U).
$$
 It follows that
$
\tau(U_h) \geq \tau(U)\left (1+\frac{1}{H^{2p_0+2}}\right )>\lambda,
$
which is a contradiction. The theorem is proved.
\end{proof}

\section{\textbf{Proof of Theorem \ref{thm:irrational}}}\label{sec:irrational}
In this section, we  prove Theorem \ref{thm:irrational} by a number theoretic argument.

 Let $p$ be a prime number.  The  $p$-adic valuation function $v_p(k)$  denotes the number of factor $p$ contained in $k\in \mathbb{Z}$.
For a rational number $k_1/k_2$, set  $v_p(k_1/k_2)=v_p(k_1)-v_p(k_2)$. For   $x\in \Q$, we define
$|x|_p=p^{-v_p(x)}$, which is a non-archimedean absolute value on $\Q$. See \cite{Fern97}.

From now on, we assume that $E, F\in {\mathcal M}_t(n,m)$  and  $\mu_E$ and $\mu_F$ have the same multifractal spectrum.  By Theorem \ref{thm:invariants}, we have
\begin{equation}\label{equ_a_ib_i}
\frac{a_i^*}{b_i^*}= \left(\frac{N}{N'}\right)^{1/(1-\sigma)}, \ \
~~\text{for }~~ i=1,\dots,\tilde p.
\end{equation}

Denote
${\cal A}=\{a_1^*,\dots ,a_{\tilde p}^*\}$ and ${\cal B}=\{b_1^*,\dots ,b_{\tilde p}^*\}.$
For $1\leq j \leq \tilde p$, we call $b_j^*$ the \emph{dual} of $a_j^*$ and vice versa. Moreover, we  say a word $\bz=z_1\dots z_k\in {\cal B}^k$ is the dual of $\bw=w_1\dots w_k\in {\cal A}^k$, if
$z_j$ is the dual of $w_j$ for all $j=$$1,\dots, k$.

Let $W$ and $W'$ be  approximate squares of $\mathbf{E}_k$ and $\mathbf{F}_k$ with color $\bc$ and $\bc'$, respectively.
If $\bc'$ is the dual of $\bc$, by Lemma \ref{lem:offspring+} and \eqref{equ_a_ib_i},  we have
\begin{equation}\label{eq:gammak}
\frac{\mu_E(W)}{\mu_F(W')} = \frac{\prod \bc \cdot (N')^{\ell(k)}}{ \prod \bc' \cdot N^{\ell(k)}}=
\left(\frac{a_1^*}{b_1^*}\right)^{\ell(k)-k}\left( \frac{N'}{N}\right)^{\ell(k)}:=\gamma_k.
\end{equation}

 \begin{lemma}\label{dis_gamma}
 If $\sigma\in \Q^c$ and $N\neq N'$, then

 (i) $\gamma_k<m$ and all $\gamma_k$'s are distinct rational numbers.

 (ii) There exists  a prime factor $p$ of $b_1^*N$ such that $v_p(\gamma_k)$ tends to $-\infty$ as $k\to \infty$.
 \end{lemma}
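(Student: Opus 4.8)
The plan is to analyze the quantity $\gamma_k = \left(\frac{a_1^*}{b_1^*}\right)^{\ell(k)-k}\left(\frac{N'}{N}\right)^{\ell(k)}$ directly, using \eqref{equ_a_ib_i} to eliminate the ratio $a_1^*/b_1^*$ in favor of $N/N'$. First, I would substitute $\frac{a_1^*}{b_1^*} = \left(\frac{N}{N'}\right)^{1/(1-\sigma)}$ into the definition of $\gamma_k$, which gives
$$
\gamma_k = \left(\frac{N}{N'}\right)^{(\ell(k)-k)/(1-\sigma)}\left(\frac{N'}{N}\right)^{\ell(k)} = \left(\frac{N}{N'}\right)^{\frac{\ell(k)-k}{1-\sigma}-\ell(k)}.
$$
Since $\ell(k) = \lfloor k/\sigma\rfloor$, we have $\ell(k) = k/\sigma - \{k/\sigma\}$ where $\{x\}$ is the fractional part, and a short computation shows the exponent equals $\frac{\sigma\ell(k)-k}{\sigma(1-\sigma)}\cdot\sigma - \ell(k)$... more cleanly, I expect the exponent to simplify to an expression of the form $-\big(\{k/\sigma\}\big)\cdot c$ plus something bounded, so that $\gamma_k$ stays bounded; the bound $\gamma_k < m$ should follow because $\gamma_k$ is the ratio of measures of two approximate squares sitting inside a common structure, or more elementarily because $0 < \ell(k)-k < 1/\sigma$ (roughly $\ell(k)-k \le \lfloor 1/\sigma\rfloor$) and $a_1^* < n$, $b_1^* \ge 1$, forcing $\gamma_k \le (a_1^*/b_1^*)^{\ell(k)-k} \le n^{1/\sigma} \cdot(\text{correction})$; I would pin down the cleanest of these and cite Lemma \ref{lem:offspring+} for the interpretation as a measure ratio.

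For the distinctness claim in (i), I would argue that if $\gamma_k = \gamma_{k'}$ for $k \ne k'$, then since $N/N' \ne 1$ this forces the two exponents to be equal, i.e. $\frac{\ell(k)-k}{1-\sigma} - \ell(k) = \frac{\ell(k')-k'}{1-\sigma} - \ell(k')$. Clearing denominators, this is a linear relation among $1$, $\sigma$, $k$, $k'$, $\ell(k)$, $\ell(k')$ with integer coefficients; isolating $\sigma$ (or $1/\sigma$) would express $\sigma$ as a rational number unless all coefficients vanish, and the vanishing case would force $k = k'$ after substituting $\ell(k) = \lfloor k/\sigma\rfloor$. This contradicts $\sigma \in \Q^c$. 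Rationality of each individual $\gamma_k$ is clear since $a_i^*, b_i^*, N, N'$ are positive integers and the exponents $\ell(k)-k$, $\ell(k)$ are integers once we use \eqref{eq:gammak} in its original (unsubstituted) form.

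For part (ii), I would take any prime $p$ dividing $b_1^* N$ and compute $v_p(\gamma_k)$ from \eqref{eq:gammak}: $v_p(\gamma_k) = (\ell(k)-k)\big(v_p(a_1^*) - v_p(b_1^*)\big) + \ell(k)\big(v_p(N') - v_p(N)\big)$. The strategy is to show that for a suitable choice of $p$ this is a negative quantity growing in absolute value. The relation \eqref{equ_a_ib_i} implies $(a_1^*/b_1^*)^{\text{(denominator of }1/(1-\sigma)\text{)}}$... rather, raising \eqref{equ_a_ib_i} to an integer power killing the exponent $1/(1-\sigma)$ is not available since $\sigma$ is irrational, so instead I would use that $N \ne N'$ to find a prime $p$ with $v_p(N) \ne v_p(N')$; then $v_p(N'/N) =: d \ne 0$. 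Combined with $v_p(a_1^*/b_1^*) = \frac{-d}{1-\sigma}\cdot$(something)—no: here I must be careful, since $a_1^*/b_1^*$ is a positive rational but $(N/N')^{1/(1-\sigma)}$ need not be rational, so \eqref{equ_a_ib_i} is an \emph{equation of real numbers} that constrains things tightly. The clean route: from \eqref{equ_a_ib_i}, $(1-\sigma)\log(a_1^*/b_1^*) = \log(N/N')$, and taking $p$-adic valuations is not directly legal, but we \emph{can} observe that $\gamma_k$ as written in \eqref{eq:gammak} has $v_p(\gamma_k) = (\ell(k)-k)v_p(a_1^*/b_1^*) + \ell(k)\,v_p(N'/N)$, and since $\ell(k)/( \ell(k)-k) \to 1/(1/\sigma - 1) = \sigma/(1-\sigma)$ while from the real equation $v_p$-analysis won't apply—so the honest approach is: choose $p \mid b_1^* N$ appropriately so that the integer $v_p(\gamma_k) = (\ell(k)-k)\alpha + \ell(k)\delta$ with $\alpha = v_p(a_1^*)-v_p(b_1^*)$, $\delta = v_p(N')-v_p(N)$, and use that $\ell(k) \to \infty$ with $\ell(k)-k$ bounded, so $v_p(\gamma_k) \sim \ell(k)\,\delta$; it remains only to exhibit $p$ with $\delta < 0$, i.e. $v_p(N) > v_p(N')$, which exists iff $N$ does not divide $N'$ in the relevant sense—and here the boundedness $\gamma_k < m$ from part (i) forces $v_p(\gamma_k)$ to be bounded \emph{above}, so it cannot tend to $+\infty$; if it does not tend to $-\infty$ either, then $v_p(\gamma_k)$ is bounded for \emph{every} prime, making the $\gamma_k$ lie in a finite set, contradicting their distinctness from (i). This last pigeonhole observation—distinct rationals in $(0,m)$ with globally bounded valuations at every prime form a finite set—is, I expect, the cleanest way to close (ii), and identifying precisely which prime $p$ (among divisors of $b_1^* N$) carries the unboundedness will be the main technical point to nail down.
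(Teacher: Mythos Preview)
Your plan for part (i) is essentially the paper's: substituting \eqref{equ_a_ib_i} and writing $\ell(k)=k/\sigma-\{k/\sigma\}$ collapses the exponent to $-\sigma\{k/\sigma\}/(1-\sigma)$, i.e.\ $\gamma_k=(a_1^*/b_1^*)^{-\sigma\{k/\sigma\}}$, from which both the bound $\gamma_k<n^\sigma=m$ and the distinctness (via distinctness of $\{k/\sigma\}$ for irrational $1/\sigma$) drop out. Your exploratory bounds like ``$\gamma_k\le(a_1^*/b_1^*)^{\ell(k)-k}$'' are not correct as stated, but the clean route you start with does work once carried through.

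Part (ii), however, has a real gap. The assertion that $\ell(k)-k$ is bounded is false: since $\sigma=\log m/\log n<1$, one has $\ell(k)-k\sim k(1/\sigma-1)\to\infty$. Consequently the claimed asymptotic $v_p(\gamma_k)\sim\ell(k)\,\delta$ is unjustified: both terms $(\ell(k)-k)\alpha$ and $\ell(k)\delta$ grow linearly in $k$ and may cancel. Separately, the step ``$\gamma_k<m$ forces $v_p(\gamma_k)$ bounded above'' is also wrong for an individual prime (think of $2^k/3^k<1$ with $v_2\to+\infty$), so your pigeonhole closure does not go through: ``does not tend to $-\infty$'' is strictly weaker than ``bounded below'', and without the latter you cannot trap the $\gamma_k$ in a finite set.

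The missing idea, which is exactly what the paper uses, is to observe that
\[
v_p(\gamma_k)=\alpha(\ell(k)-k)+\delta\,\ell(k)
=k\bigl(\alpha(1/\sigma-1)+\delta/\sigma\bigr)-(\alpha+\delta)\{k/\sigma\},
\]
so $v_p(\gamma_k)$ is linear in $k$ up to a bounded error. The slope $\alpha(1/\sigma-1)+\delta/\sigma$ vanishes only if $\alpha=\delta=0$ (since $\sigma\notin\Q$), hence for every prime with $(\alpha,\delta)\ne(0,0)$ one has $v_p(\gamma_k)\to+\infty$ or $v_p(\gamma_k)\to-\infty$. Since $N\ne N'$ there is at least one such prime, and since $\gamma_k=\prod_p p^{v_p(\gamma_k)}<m$ they cannot all go to $+\infty$; thus some $p$ has $v_p(\gamma_k)\to-\infty$, and this $p$ necessarily divides the denominator $(b_1^*)^{\ell(k)-k}N^{\ell(k)}$, i.e.\ $p\mid b_1^*N$. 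Once you insert this linear-growth observation, your final pigeonhole intuition becomes a valid argument.
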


 \begin{proof} (i) Denote by $\{x\}$ the fractional part of $x$. By \eqref{equ_a_ib_i} with $i=1$, we have
 $$\gamma_k=\left ({a_1^*}/{b_1^*}\right )^{-\sigma \{{k}/{\sigma}\}}<n^\sigma=m.$$
 Since $\sigma\in \Q^c$, we have $\{k/\sigma\}$ are distinct,  so $\gamma_k$ are distinct. Item (i) is proved.

 (ii) Let $p$ be a prime factor of  $b_1^*N$ such that $u:=v_p(a_1^*/b_1^*)$ and $u':=v_p(N'/N)$ are not simultaneously $0$.
 Then $u(1/\sigma-1)+u'/\sigma \neq 0$ by the irrationality of $\sigma$. So
 $$v_p(\gamma_k)=u(\ell(k)-k)+u' \ell(k)=k(u({1}/{\sigma}-1)+{u'}/{\sigma})-(u+u')\{{k}/{\sigma}\} $$
 either tends to $+\infty$ or tends to $-\infty$ as $k\to \infty$.
 Since $\gamma_k<m$, we have that $v_p(\gamma_k)$ tends to $-\infty$ for at least one prime factor of $b_1^*N$.
 \end{proof}

\begin{proof}[\textbf{Proof of Theorem \ref{thm:irrational}.}]
By Corollary \ref{cor:spectra}, the assumption $E\sim F$  implies that $\mu_E$ and $\mu_F$
have the same multifractal spectrum. Hence, by Theorem \ref{thm:invariants},
  $(a_j)_{j=0}^{m-1}$
is a permutation of $(b_j)_{j=0}^{m-1}$ if and only if $N=N'$.
Suppose on the contrary that $N\neq N'$.

Let $p$ be a prime factor of $b_1^*N$ such that   $v_p(\gamma_k)$ tends to $-\infty$ as $k\to \infty$ by Lemma \ref{dis_gamma}(ii).
Let $h$ be an  index such that $v_p(a_h^*)=\min \{v_p(a_j^*); ~j\in \{1,\dots, \tilde p\}\}$.
Let $d_0=(x_0,y_0)$ be an element of $\SD$ satisfying  $a({y_0})=a_h^*$.

Let $U_0$ be an element in ${\mathcal C}_E$ such that $f|_{U_0}$ is measure preserving  (see Theorem \ref{thm:measure}).
Denote the rank of $U_0$ by $k_0$. Let $E_{\bi}$ be a cylinder of rank $\ell(k_0)$ contained in $U_0$, and  let $z_0\in E_{\bi}$ be the point with coding $\bi(d_0)^\infty$.

Pick  $k>\ell(k_0)$. Let $U=U_k$ be the  offspring  of $U_0$  with rank $k$ such that $z_0\in U$.
Let $B$ be a member of $U$ containing $z_0$, then $B$ has color  $\ba^*= (a_h^*)^{\ell(k)-k}$.
By Lemma \ref{lem:Falconer+},
$f(U\cap E)=\bigcup_{j=1}^q (J_j\cap F),$
where $J_j$ are components of ${\mathbf F}_{k+p_0}$.
By Lemma \ref{lem:UU'} (i), there exists an positive integer   $u<L_0n^{2m}$ such that
$$
\mu_E(U)=\frac{u }{(a_1^*\dots a_{\tilde p}^*)^2}\mu_E(B)=
\frac{u\prod \ba^*}{N^{\ell(k)}(a_1^*\dots a_{\tilde p}^*)^2}.
$$
  Similarly, for $j=1,\dots, q$,  let $\bb_j$ be
the color of left-bottom member  of $J_j$,  then we have
$$
\mu_F(J_j)=\frac{u_j\prod \bb_j}{(N')^{\ell(k+p_0)}(b_1^*\dots b_{\tilde p}^*)^2}
$$
for some integer $u_j< L_0n^{2m}$, where $L_0$ is a constant such that Lemma \ref{lem:cartesian} holds for $E$
and $F$ simultaneously.

In the following  we estimate  $|\mu_F(J_j)/\mu_E(U)|_p$.

Let $\ba_j$ be the dual of $\bb_j$, and let $n_0=|\bb_j|-|\ba^*|=\ell(k+p_0)-\ell(k)-p_0$.
 By (\ref{eq:gammak}), we have
\begin{eqnarray*}\label{eq:muEF}
\frac{\mu_F(J_j)}{\mu_E(U)} &=&\frac{u_j\prod \bb_j \cdot N^{\ell(k)} } {u\prod \ba^* \cdot {(N')^{\ell(k+p_0)} } }
\cdot \left(\frac{a_1^*}{b_1^*}  \right)^{2\tilde{p}}
 = \frac{u_j\prod_{i=1}^{n_0} \bb_j(i)}{u\cdot (N')^{n_0+p_0}} \left(\frac{a_1^*}{b_1^*}  \right)^{2\tilde{p}}\cdot \frac{\prod { S}^{n_0}(\ba_j)}{\prod \ba^*}
\cdot  \gamma_k^{-1}\\
&:=&H_1\cdot H_2\cdot \gamma_k^{-1}.
\end{eqnarray*}
The numerator and denominator of $H_1$ are both bounded, so $|H_1|_p$ is also bounded.
As for $H_2$, we have $ |H_2|_p \leq 1$ by the minimality of $a_h^*$.
Therefore,
$$
\left |\frac{\mu_F(J_j)}{\mu_E(U)}\right |_p=|H_1|_p\cdot |H_2|_p \cdot |\gamma_k^{-1}|_p\to 0 \quad \text{ as }k\to\infty.
$$

Denote $\lambda=\mu_F(f(E_{\bi}))/\mu_E(E_{\bi})$, which is apparently   a rational number, since $f(E_{\bi})$ is a finite union of cylinders of $F$.
On one hand, by the measure preserving property, we have  for all $k>\ell(k_0)$,
$$\left |\frac{\mu_F\circ f(U\cap E)}{\mu_E(U)}\right |_p=|\lambda|_p;$$
on the other hand, we have
$$
\left |\frac{\mu_F\circ f(U\cap E)}{\mu_E(U)}\right |_p
=\left |\frac{\sum_{j=1}^q \mu_F(J_j)}{\mu_E(U)}\right |_p
\leq \max_{j=1,\dots, q} \left |\frac{\mu_F(J_j)}{\mu_E(U)}\right |_p\to 0
$$
as $k\to \infty$. This is a contradiction, and the theorem is proved.
\end{proof}

%%%%%%%%%   References     %%%%%%%%%%%%%%%%%%%%%%%%%%%%%%
%\input{McMullen_ref}

\end{document}